\newcommand{\Bag}{X}
\newcolumntype{L}[1]{>{\raggedright\let\newline\\\arraybackslash\hspace{0pt}}m{#1}}
\newcolumntype{C}[1]{>{\centering\let\newline\\\arraybackslash\hspace{0pt}}m{#1}}
\newcolumntype{R}[1]{>{\raggedleft\let\newline\\\arraybackslash\hspace{0pt}}m{#1}}
\newcommand{\problemdef}[4][XXXEMPTYLABELXXX]{
	\begin{center}
		\begin{boxedminipage}{\textwidth}
			\textsc{{#2}}\ifthenelse{\equal{#1}{XXXEMPTYLABELXXX}}{}{\label{#1}}\\[2pt]
		    \renewcommand{\arrayrulewidth}{0pt}
			\begin{tabular}{@{\hspace{0.007\textwidth}}r@{\hspace{0.007\textwidth}}p{0.87\textwidth}@{\hspace{0.007\textwidth}}}
				\textit{Input:}  & {#3}\\
				\textit{Output:}  & {#4}
			\end{tabular}
		\end{boxedminipage}
	\end{center}
}
\newcommand*\linenomathpatch[1]{%
  \cspreto{#1}{\linenomath}%
  \cspreto{#1*}{\linenomath}%
  \csappto{end#1}{\endlinenomath}%
  \csappto{end#1*}{\endlinenomath}%
}
\newcommand*\linenomathpatchAMS[1]{%
  \cspreto{#1}{\linenomathAMS}%
  \cspreto{#1*}{\linenomathAMS}%
  \csappto{end#1}{\endlinenomath}%
  \csappto{end#1*}{\endlinenomath}%
}
  \let\linenomathAMS\linenomathWithnumbers
  \patchcmd\linenomathAMS{\advance\postdisplaypenalty\linenopenalty}{}{}{}
  \let\linenomathAMS\linenomathNonumbers
\pretocmd{\NAT@citexnum}{\@ifnum{\NAT@ctype>\z@}{\let\NAT@hyper@\relax}{}}{}{}
\newcommand{\tw}{\mathrm{tw}}
\newcommand{\itw}{\mathrm{itw}}
\newcommand{\mms}{\mathrm{mms}}
\newcommand{\tin}{\mathsf{tree} \textnormal{-} \alpha}
\newcommand{\treechi}{\mathsf{tree} \textnormal{-} \chi}
\newcommand{\FF}{\mathcal{F}}
\newcommand{\HH}{\mathcal{H}}
\newcommand{\G}{\mathcal{G}}
\renewcommand{\P}{\textsf{P}}
\newcommand{\NP}{\textsf{NP}}
\newtheorem{theorem}{Theorem}[section]
\newtheorem{corollary}[theorem]{Corollary}
\newtheorem{proposition}[theorem]{Proposition}
\newtheorem{lemma}[theorem]{Lemma}
\newtheorem{observation}[theorem]{Observation}
\newtheorem*{fact}{Fact}
\theoremstyle{definition}
\newtheorem{remark}[theorem]{Remark}
\newtheorem{definition}[theorem]{Definition}
\newtheorem{question}[theorem]{Question}
\newtheorem{numbered-claim}{Claim}
\crefname{numbered-claim}{Claim}{Claims}
\Crefname{numbered-claim}{Claim}{Claims}
\crefname{question}{Question}{Questions}
\Crefname{question}{Question}{Questions}
\crefname{property}{property}{properties}
\Crefname{property}{Property}{Properties}
\title{Treewidth versus clique number. II. Tree-independence number}
\author{
\begin{center}
Cl\'{e}ment Dallard\textsuperscript{1}, Martin Milani{\v c}\textsuperscript{2,3}, Kenny \v{S}torgel\textsuperscript{2,4}\\[10pt]
{\small \textsuperscript{1} LIP, ENS de Lyon, France}\\
{\small \textsuperscript{2} FAMNIT, University of Primorska, Koper, Slovenia}\\
{\small \textsuperscript{3} IAM, University of Primorska, Koper, Slovenia}\\
{\small \textsuperscript{4} Faculty of Information Studies in Novo mesto, Slovenia}\\[5pt]
{\footnotesize%
\url{clement.dallard@ens-lyon.fr}\quad
\url{martin.milanic@upr.si}\quad
\url{kennystorgel.research@gmail.com}}
\end{center}
}
\date{}
\begin{document}
\maketitle%

\begin{abstract}
In 2020, we initiated a systematic study of graph classes in which the treewidth can only be large due to the presence of a large clique, which we call $(\tw,\omega)$-bounded.
The family of $(\tw,\omega)$-bounded graph classes provides a unifying framework for a variety of very different families of graph classes, including graph classes of bounded treewidth, graph classes of bounded independence number, intersection graphs of connected subgraphs of graphs with bounded treewidth, and graphs in which all minimal separators are of bounded size.
While Chaplick and Zeman showed in 2017 that $(\tw,\omega)$-bounded graph classes enjoy some good algorithmic properties related to clique and coloring problems, it is an interesting open problem to which extent \hbox{$(\tw,\omega)$-boundedness} has useful algorithmic implications for problems related to independent sets.
We provide a partial answer to this question by identifying a sufficient condition for $(\tw,\omega)$-bounded graph classes to admit a polynomial-time algorithm for the Maximum Weight Independent Packing problem and, as a consequence, for the weighted variants of the Independent Set and Induced Matching problems.

Our approach is based on a new min-max graph parameter related to tree decompositions.
We define the \emph{independence number} of a tree decomposition $\mathcal{T}$ of a graph as the maximum independence number over all subgraphs of $G$ induced by some bag of $\mathcal{T}$.
The \emph{tree-independence number} of a graph $G$ is then defined as the minimum independence number over all tree decompositions of $G$.
Boundedness of the tree-independence number is a refinement of \hbox{$(\tw,\omega)$-boundedness} that is still general enough to hold for all the aforementioned families of graph classes.
Generalizing a result on chordal graphs due to Cameron and Hell from 2006, we show that if a graph is given together with a tree decomposition with bounded independence number, then the Maximum Weight Independent Packing problem can be solved in polynomial time.
Applications of our general algorithmic result to specific graph classes are given in the third paper of the series [Dallard, Milani\v{c}, and \v{S}torgel, Treewidth versus clique number. {III}. Tree-independence number of graphs with a forbidden structure, 2022].
\end{abstract}

\clearpage
\tableofcontents
\clearpage

\section{Introduction}

\subsection{Width parameters of graphs}

One of the approaches for dealing with an algorithmically hard problem is to identify restrictions on the input instances under which the problem becomes solvable in polynomial time.
In the case of graph problems, various measures of structural complexity of graphs, commonly known as \emph{graph width parameters}, have become one of the central tools for this task.
When a width parameter of the input graph is bounded by a fixed constant, this often leads to the development of efficient dynamic programming algorithms, typically applied along some kind of tree-like decomposition of the graph into simpler parts, which is well-behaved according to the specifics of the parameter
(see, e.g.,~\cite{MR985145,MR1105479,MR1042649,MR1739644,MR4020556,MR3126918,MR2857670,MR4288865,MR4402362,MR4189412,MR3967291}).
For a concrete example, consider the \emph{treewidth} of a graph $G$.
This parameter measures, roughly speaking, how similar the graph is to a tree.
A formal definition is given using the concept of a \emph{tree decomposition} of $G$, which is a collection of bags (subsets of the vertex set of $G$) arranged into a tree structure and satisfying certain conditions~\cite{MR742386}.
The treewidth of a graph is defined as the smallest possible size, over all tree decompositions of the graph, of a largest bag in the decomposition minus one.
The key algorithmic result in the context of treewidth is the celebrated metatheorem due to Courcelle~\cite{MR1042649}, which states that any decision problem expressible in monadic second order logic, with quantifiers over vertex and edge subsets,
can be solved in linear time if the input graph is equipped with a tree decomposition of bounded width.
A similar theorem for optimization problems was developed by Arnborg, Lagergren, and Seese~\cite{MR1105479}.

Graphs of bounded treewidth are necessarily sparse.
In 2000, Courcelle and Olariu introduced a more general parameter called \emph{clique-width}~\cite{MR1743732}, which is bounded whenever the treewidth is bounded (see also~\cite{MR2148860}) but whose boundedness is not restricted to sparse graph families.
An algorithmic metatheorem for graph classes with bounded clique-width was developed by Courcelle, Makowsky, and Rotics~\cite{MR1739644}.
Similarly to the algorithmic metatheorems for treewidth, this theorem also applies to decision or optimization problems expressible in monadic second order logic; however, it only allows for quantifiers over vertex subsets.
More recently, several other width parameters capable of handling dense graphs were also introduced and studied, including rank-width~\cite{MR2232389}, Boolean-width~\cite{MR2857670,MR3126918}, mim-width~\cite{vatshelle2012new,MR4020556,MR4189412,MR3721445}, and twin-width~\cite{MR4402362,MR4288865,MR4262551}.
Bounded clique-width is equivalent to bounded rank-width or bounded Boolean-width, but implies bounded mim-width as well as bounded twin-width.
Each of these width parameters has some useful algorithmic features.
To mention one more example, the so-called \emph{locally checkable vertex subset} and \emph{vertex partitioning} problems are solvable in polynomial time on any class of bounded mim-width, provided the input graph is given with a branch decomposition of constant width~\cite{MR3126917,MR3126918}.
This includes the \textsc{Independent Set} problem, the \textsc{Induced Matching} problem, and various variants of the \textsc{Dominating Set} problem, as well as the \textsc{$k$-Coloring} and the \textsc{$H$-Homomorphism} problems.

\subsection{Chordal graphs}Despite a growing variety of graph width parameters and associated algorithmic results, there exist some well-structured graph classes containing dense graphs in which all of the above width parameters remain unbounded.
In such a case, different algorithmic approaches are needed.
This is the case, for example, for the well-known class of \emph{chordal graphs} (also known as \emph{triangulated graphs}), defined as graphs in which every cycle of length at least four has a chord.
Chordal graphs correspond to sparse symmetric positive-definite systems of linear equations that admit a particularly efficient Gaussian elimination (see~\cite{MR408312}) and have applications in many other areas, including probabilistic graphical models and continuous optimization (see, e.g., the survey~\cite{DBLP:journals/ftopt/VandenbergheA15}).
They have been extensively studied in the literature from both structural and algorithmic points of view (see, e.g.,~\cite{MR1686154,MR1971502,MR1320296,MR2063679}).
However, the class of chordal graphs has unbounded mim-width, as follows from the analogous result for the class of strongly chordal split graphs~\cite{MR3853109}.
Also, the twin-width of chordal graphs can be arbitrarily large, as follows from the fact that this parameter is unbounded in the class of interval graphs, that is, intersection graphs of closed intervals on the real line~\cite{MR4262551}.
Split graphs, strongly chordal graphs, and interval graphs are all subclasses of the class of chordal graphs (see, e.g.,~\cite{MR1686154}).

Thus, one cannot obtain efficient algorithms for the class of chordal graphs by applying any of the algorithmic metatheorems for the aforementioned width parameters.
In fact, this is not surprising, as several problems remain \NP-complete in the class of chordal graphs, including the \textsc{Dominating Set}~\cite{MR754426} and the \textsc{Weighted Independent Dominating Set} problems~\cite{MR2087898}.
Nevertheless, many \NP-complete problems are known to be efficiently solvable for chordal graphs, including the \textsc{Independent Dominating Set} problem~\cite{MR687354}, as well as the \textsc{Independent Set}, the \textsc{Clique}, and the \textsc{Chromatic Number} problems, as a consequence of the fact that these problems are solvable in polynomial time in the more general class of perfect graphs~\cite{MR936633}.
Furthermore, since chordal graphs have only polynomially many minimal separators (see, e.g.,~\cite{MR1935387}), a metatheorem for such graph classes due to Fomin, Todinca, and Villanger~\cite{MR3311877} applies, establishing polynomial-time solvability of certain problems expressible in counting monadic second order logic.
Besides the already mentioned \textsc{Independent Set} problem, this approach captures the \textsc{Feedback Vertex Set} problem, the \textsc{Induced Matching} problem,  and various packing problems.

\subsection{Treewidth versus clique number}

Chordal graphs have an interesting property regarding treewidth.
While graphs with large cliques necessarily have large treewidth, in chordal graphs the converse holds, too: the treewidth of a chordal graph can only be large due to the presence of a large clique (see, e.g.,~\cite{MR1647486}).
Recently, we initiated in~\cite{DMS-WG2020,dallard2021treewidth} a systematic study of graph classes in which this sufficient condition for large treewidth---the presence of a large clique---is also necessary, which we call $(\tw,\omega)$-bounded.
A graph class $\G$ is said to be \emph{$(\tw,\omega)$-bounded} if it admits a  \emph{$(\tw,\omega)$-binding function}, that is, a function $f$ such that the treewidth of any graph $G\in \G$ is at most $f(\omega(G))$, where $\omega(G)$ is the clique number of $G$, and the same holds for all induced subgraphs of $G$.
The family of $(\tw,\omega)$-bounded graph classes provides a unifying framework for a variety of very different families of graph classes studied in the literature.
Besides graph classes of bounded treewidth, $(\tw,\omega)$-boundedness holds for graph classes of bounded independence number (as a consequence of Ramsey's theorem), intersection graphs of connected subgraphs of graphs with treewidth at most $t$, for any fixed positive integer $t$, and classes of graphs in which all minimal separators are of bounded size.
Intersection graphs of connected subgraphs of graphs with treewidth at most $t$ were studied in 1990 by Scheffler~\cite{MR1090614} and in 1998 by Bodlaender, Gustedt, and Telle~\cite{MR1642971}.
They include chordal graphs (for which $t = 1$, since every chordal graph is the intersection graph of subtrees in a tree~\cite{MR357218,MR505894,MR332541}) and circular-arc graphs, that is, intersection graphs of circular arcs on a circle (for which $t = 2$), as well as \emph{{$H$}-graphs}, that is, the intersection graphs of connected subgraphs of a subdivision of a fixed multigraph $H$, introduced in 1992 by B\'{\i}r\'{o}, Hujter, and Tuza~\cite{MR1172354} and studied more recently in a number of papers~\cite{DBLP:journals/endm/ChaplickZ17,MR4249058,MR3746153,MR4141534,MR4332111}.
Classes of graphs in which all minimal separators are of bounded size were studied in 1999 by Skodinis \cite{MR1852483}.
In~\cite{DMS-WG2020,dallard2021treewidth}, we
characterized, for each of six well-known graph containment relations (the minor, topological subgraph, subgraph, and their induced variants), the graphs $H$ such that the class of graphs excluding $H$ with respect to the relation is $(\tw,\omega)$-bounded.

In view of this richness of the family of $(\tw,\omega)$-bounded graph classes, one should not expect algorithmic metatheorems for $(\tw,\omega)$-bounded graph classes of similar generality as those available, for example, for graphs of bounded treewidth or bounded clique-width.
In fact, as already exemplified via the class of chordal graphs, several $\NP$-complete graph problems remain $\NP$-complete when restricted to some $(\tw,\omega)$-bounded graph classes.
This is the case not only for the \textsc{Dominating Set} and \textsc{Weighted Independent Dominating Set} problems, but also for the \textsc{Chromatic Number} problem, which is $\NP$-complete in the class of circular-arc graphs~\cite{MR578325}, and for the unweighted \textsc{Independent Dominating Set} problem, which is $\NP$-complete in the  $(\tw,\omega)$-bounded class of graphs that have a clique such that each vertex is either in the clique or is adjacent to at most one vertex not in the clique~\cite{MR2207507}.
Nevertheless, $(\tw,\omega)$-bounded graph classes do enjoy some good algorithmic properties related to clique and coloring problems.
They can be summarized as follows.
\begin{enumerate}
\item Chaplick and Zeman showed in~\cite{DBLP:journals/endm/ChaplickZ17} (see also~\cite{MR4332111}) that for every positive integer $k$, any $(\tw,\omega)$-bounded graph class $\mathcal{G}$ having a computable binding function admits linear-time algorithms for the {\sc $k$-Clique} and the {\sc List $k$-Coloring} problems.
Furthermore, as we showed in~\cite{DMS-WG2020,dallard2021treewidth}, any such class admits a polynomial-time algorithm for the {\sc List $k$-Coloring} problem that is \emph{robust} in the sense of Raghavan and Spinrad~\cite{MR2006100}: it either solves the problem or determines that the input graph is not in $\mathcal{G}$.

\item Similarly, for every positive integer $k$, there exists a robust quadratic-time algorithm for the {\sc List $k$-Edge-Coloring} problem in any $(\tw,\omega)$-bounded graph class having a computable binding function $f$.
Indeed, if the input graph $G$ has a vertex with degree more than $k$, then it cannot be $k$-edge-colorable with respect to the given lists of colors on the edges, while if the maximum degree is bounded by $k$, then the clique number is bounded by $k+1$ and therefore the treewidth is bounded by $c_k = \max\{f(j) : 1\le j\le f(k+1)\}$.
In this case, we can use the linear-time algorithm of Bodlaender~\cite{MR1417901} to test whether $\tw(G)\le c_k$.
If $\tw(G)>c_k$, then the graph is not in the class.
If $\tw(G)\le c_k$, then the clique-width of $G$ is at most $\ell$ where $\ell = 3\cdot 2^{c_k-1}$ (see Corneil and Rotics~\cite{MR2148860}), and using a $(2^{2\ell+1}-1)$-expression of $G$ that can be computed in time $\mathcal{O}(|V(G)|^2)$
with an algorithm due to Fomin and Korhonen~\cite{FominKorhonen2022}, the {\sc List $k$-Edge-Coloring} problem can be solved in time $\mathcal{O}(|V(G)|^2)$ by invoking a result of Kobler and Rotics~\cite{MR1948213}.

\item As also shown in~\cite{DMS-WG2020,dallard2021treewidth},
known approximation algorithms for treewidth (see, e.g.,~\cite{MR2411037}) lead to improved approximations for the clique number of a graph from a $(\tw,\omega)$-bounded graph class having a computable exponential binding function.
The approximation can be improved further if the binding function is computable and either linear or polynomial.
For example, in the case of a polynomial $(\tw,\omega)$-binding function \hbox{$f(k) = \mathcal{O}(k^c)$} for some constant $c$, the clique number can be approximated in polynomial time to within a factor of ${\sf opt}^{1-1/(c+\varepsilon)}$ for all $\varepsilon>0$, where {\sf opt} denotes the maximum size of a clique of the input graph $G$.
For general graphs, this problem is notoriously difficult to approximate: for every $\varepsilon>0$, there is no polynomial-time algorithm for approximating the maximum clique in an $n$-vertex graph to within a factor of $n^{1-\varepsilon}$ unless $\P = \NP$ \cite{MR2403018}.
\end{enumerate}

An interesting open problem is whether $(\tw,\omega)$-boundedness has any further algorithmic implications, for example for problems related to independent sets.
An \emph{independent set}---or \emph{stable set}---in a (simple, finite, and undirected) graph $G$ is a set of pairwise non-adjacent vertices.
The \emph{independence number} of a graph $G$ is the maximum size of an independent set in $G$.
More generally, given a graph $G$ and a weight function $w:V(G)\to \mathbb{Q}_+$, the \textsc{Max Weight Independent Set} problem asks to find an independent set $I$ in $G$ of maximum possible weight $w(I)$, where $w(I) = \sum_{x\in I}w(x)$.
The \textsc{Max Weight Independent Set} problem is one of the most studied graph optimization problems.
It is not only (strongly) \NP-hard in general~\cite{MR0378476} but also \NP-hard to approximate on $n$-vertex graphs to within a factor of $n^{1-\epsilon}$ for every $\epsilon>0$~\cite{MR2403018}.
Understanding what restrictions on the input graph make the problem more tractable has been a subject of investigation for decades, with several recent developments~\cite{MR4117301,MR4141321,MR3763297,MR3909546,MR4262549,MR4262487,MR4232071,DBLP:conf/sosa/PilipczukPR21,10.1145/3406325.3451034}.

We observed in~\cite{dallard2021treewidth} that in all the $(\tw,\omega)$-bounded graph classes defined by excluding a single graph $H$ with respect to any of the six considered graph containment relations (the minor, topological subgraph, subgraph, and their induced variants), the \textsc{Max Weight Independent Set} problem can be solved in polynomial time, except possibly for the case when $H$ is excluded with respect to the induced minor relation and $H$ is isomorphic to either $K_5^-$ (the complete $5$-vertex graph minus an edge), $W_4$ (a wheel with four spokes, that is, the graph obtained from the $4$-vertex cycle by adding to it a universal vertex), or the complete bipartite graph $K_{2,q}$ where $q\ge 3$.
(See \cref{sec:preliminaries} for precise definitions.)
We asked whether there is a $(\tw,\omega)$-bounded graph class in which the \textsc{Max Weight Independent Set} problem is \NP-hard.
Unless $\P = \NP$, the question can be equivalently formulated as follows.

\begin{question}\label{question3}
Is the \textsc{Max Weight Independent Set} problem solvable in polynomial time in every $(\tw,\omega)$-bounded graph class?
\end{question}

\subsection{Our results}

\begin{sloppypar}
Motivated by the quest to increase our understanding of algorithmic implications of \hbox{$(\tw,\omega)$-boundedness}, we provide in this work a partial answer to \cref{question3} by identifying a sufficient condition for polynomial-time solvability of the \textsc{Max Weight Independent Set} problem in a $(\tw,\omega)$-bounded graph class.
In fact, we do this in the more general context of the \textsc{Max Weight Independent Packing} problem.
This problem takes as input a graph $G$ and a family $\HH$ of connected subgraphs of $G$, each equipped with a nonnegative weight, and the task is to compute a maximum-weight subfamily of subgraphs from $\HH$ such that no two of them share a vertex or are connected by an edge.
\end{sloppypar}

In order to explain the corresponding result, we need some additional definitions.
In particular, we introduce a new min-max graph parameter related to tree decompositions.
Given a tree decomposition $\mathcal{T}$ of a graph $G$, we define the \emph{independence number} of $\mathcal{T}$ as the maximum independence number over all subgraphs of $G$ induced by some bag of $\mathcal{T}$.
The \emph{tree-independence number} of a graph $G$ is then defined as the minimum independence number over all tree decompositions of $G$.
We say that a graph class $\mathcal{G}$ has \emph{bounded tree-independence number} if there exists an integer $k$ such that every graph in $\mathcal{G}$ has tree-independence number at most $k$.
Furthermore, tree decompositions with bounded independence number are said to be \emph{$\alpha$-bounded} and we say that the (boundedness of the) tree-independence number in a graph class $\mathcal G$ is \emph{efficiently witnessed} if there exists a polynomial-time algorithm to compute an $\alpha$-bounded tree decomposition for every graph in $\mathcal G$.

\medskip

We show that graph classes with bounded tree-independence number satisfy the following properties:

\begin{enumerate}[label=(\arabic*)]
\item In every graph class with bounded tree-independence number, the treewidth is bounded by an explicit polynomial function of the clique number (see \cref{bounded tin implies bounded tw-omega}); the degree of the polynomial equals the bound on the tree-independence number.
Consequently, every graph class with bounded tree-independence number is \hbox{$(\tw,\omega)$-bounded}, admits linear-time algorithms for the {\sc $k$-Clique} and the {\sc List $k$-Coloring} problems for every positive integer $k$, as well as a polynomial-time robust algorithm for the {\sc List $k$-Coloring} and {\sc List $k$-Edge-Coloring} problems, and an improved approximation for the clique number.

\item In every graph class with bounded tree-independence number, the \textsc{Max Weight Independent Packing} problem
can be solved in polynomial time provided that the input graph $G$ is given along with an $\alpha$-bounded tree decomposition.
(See \cref{thm:bounded-tree-independence-number-packings}.)
This implies analogous results for several problems studied in the literature, including the \textsc{Independent $\mathcal{F}$-Packing}, the \textsc{Max Weight Independent Set}, the \textsc{Max  Weight Induced Matching}, the \textsc{Dissociation Set}, and the \textsc{$k$-Separator} problems.
We refer to \cref{sec:implications} for more details and references.
\end{enumerate}
As part of our approach related to solving the
\textsc{Max Weight Independent Packing} problem in graph classes with efficiently witnessed tree-independence numbers, we show that, given any graph $G$ and a family $\HH$ of connected subgraphs of $G$, the tree-independence number of a certain derived graph $G(\HH)$ cannot exceed the tree-independence number of $G$ (see \cref{tin-can-only-go-down-when-computing-G(HH)}).
Here, the graph $G(\HH)$ has the subgraphs in $\HH$ as vertices, with two adjacent if and only if they share a vertex or there is an edge connecting them.
Cameron and Hell showed in~\cite{MR2190818} that the following graph classes are closed under the transformation $G\mapsto G(\mathcal{H})$ (for any $\mathcal{H}$): chordal graphs, interval graphs, circular-arc graphs, polygon-circle graphs, interval filament graphs, cocomparability graphs, AT-free graphs, and weakly chordal graphs.
Our result adds the classes of graphs with tree-independence number at most~$k$ (for any positive integer $k$) to the list of graph classes that are known to be closed under this transformation, generalizing the result for chordal graphs, which, as we will see, are exactly the graphs with tree-independence number at most one.

The original motivation for introducing the tree-independence number stems from the fact that in a tree decomposition of constant width each bag interacts with an optimal solution to the problem under consideration in only a bounded number of ways, which can be enumerated efficiently.
This often leads to efficient dynamic programming algorithms.
It is natural to relax the condition on the width to the requirement that for each bag there are only \emph{polynomially many} ways in which an optimal solution can interact with the bag, and that these interactions can be enumerated efficiently.
In particular, this happens whenever each bag can interact with an optimal solution in a bounded number of vertices. (This approach was independently suggested by Maria Chudnovsky~\cite{Chudnovsky2020StonyBrook,Chudnovsky2021Berlin,Chudnovsky2021Prague,Chudnovsky2021IWOCA}.)
The definition of the independence number of a tree decomposition is in line with this discussion: it gives an upper bound on the number of vertices that an optimal solution to the \textsc{Max Weight Independent Set} problem can contain from any given bag.
Thus, we are interested in tree decompositions of $G$ that are not necessarily of smallest possible width, but that have the smallest possible independence number.
This immediately leads to the definition of the tree-independence number.

The algorithmic result about the \textsc{Max Weight Independent Packing} problem raises two natural questions:
\begin{question}\label{question1}
Which graph classes have bounded tree-independence number?
\end{question}
\begin{question}\label{question2}
For which graph classes can an $\alpha$-bounded tree decomposition be computed in polynomial time?
\end{question}

The wide applicability of our algorithmic result is indicated by the following two partial answers to \cref{question1,question2}.

First, the following families of graph classes all have bounded tree-independence number: graph classes of bounded treewidth, graph classes of bounded independence number, classes of intersection graphs of connected subgraphs of graphs with bounded treewidth, and classes of graphs in which all minimal separators are of bounded size (see \cref{sufficient conditions for bounded tree-alpha}).

Second, when a single graph $H$ is excluded with respect to any of six well-known graph containment relations (the minor, topological subgraph, subgraph, and their induced variants), the resulting graph class is $(\tw,\omega)$-bounded if and only if it has bounded tree-independence number (see the third paper of the series~\cite{dallard2022secondpaper}).
In each such case there is a polynomial-time algorithm for computing an $\alpha$-bounded tree decomposition.
These results are obtained via characterizations, for each of six graph containment relations, of the graphs $H$ such that the class of graphs excluding $H$ has bounded tree-independence number.
For completeness, we recall those characterizations below.

A graph is \emph{subcubic} if every vertex of it has degree at most three. 
We denote by $\mathcal{S}$ the class of forests in which every connected component has at most three leaves (vertices of degree one).

\begin{theorem}[Dallard, Milani\v{c}, and \v{S}torgel~\cite{dallard2022secondpaper}]\label{dichotomy tin}
   For every graph $H$, the following statements hold.
    \begin{enumerate}
        \item The class of $H$-subgraph-free graphs has bounded tree-independence number if and only if $H \in \mathcal{S}$.

        \item The class of $H$-topological-minor-free graphs has bounded tree-independence number if and only if $H$ is subcubic and planar.

        \item The class of $H$-minor-free graphs has bounded tree-independence number if and only if $H$ is planar.
        \item The class of $H$-free graphs has bounded tree-independence number if and only if $H$ is either an induced subgraph of $P_3$ or an edgeless graph.

        \item The class of $H$-induced-topological-minor-free graphs has bounded tree-independence number if and only if $H$ is either an induced topological minor of $C_4$ or $K_4^-$, or $H$ is edgeless.

        \item The class of $H$-induced-minor-free graphs has bounded tree-independence number if and only if $H$ is an induced minor of $W_4$, $K_5^-$, or $K_{2,q}$ for some $q \in \mathbb{Z}_+$, or $H$ is edgeless.
    \end{enumerate}
\end{theorem}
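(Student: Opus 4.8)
The plan is to prove each of the six equivalences in \cref{dichotomy tin} by treating separately the \emph{bounded} implication (if $H$ has the stated shape, the $H$-free class has bounded tree-independence number) and the \emph{unbounded} implication (otherwise, the class contains graphs of arbitrarily large tree-independence number). Throughout, the main leverage for the unbounded side is the contrapositive of \cref{bounded tin implies bounded tw-omega}: a class of bounded clique number and unbounded treewidth has unbounded tree-independence number. Hence, to prove unboundedness for a given ``bad'' $H$, it suffices to locate inside the $H$-free class a family of graphs with bounded clique number and unbounded treewidth, and a small repertoire of such families handles every case: walls and their subdivisions (subcubic, of clique number $2$, of girth at least $6$, and, after enough subdivisions, with every vertex of degree $3$ far from every other and of arbitrarily large girth), arbitrary subcubic graphs of unbounded treewidth, complete bipartite graphs $K_{n,n}$ (complete multipartite, hence without an induced $K_2\cup K_1$), and line graphs of subcubic walls (claw-free, of clique number at most $3$, and of unbounded treewidth since $\tw(L(G))\ge(\tw(G)-1)/2$ for any graph $G$ without isolated vertices).

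For the three ``sparse'' relations (statements~1--3) the dichotomy is really the classical bounded-treewidth dichotomy in disguise: an $H$-subgraph-free, $H$-topological-minor-free, or $H$-minor-free class always excludes $K_{|V(H)|}$, hence has bounded clique number, so by \cref{bounded tin implies bounded tw-omega} it has bounded tree-independence number if and only if it has bounded treewidth, and bounded treewidth holds precisely when $H\in\mathcal{S}$, respectively $H$ planar and subcubic, respectively $H$ planar. For $H$ planar this is the Excluded Grid Theorem of Robertson and Seymour; for the topological-minor relation one adds that minor and topological minor coincide for graphs of maximum degree at most $3$; and for statement~1 one combines the Excluded Grid Theorem --- every graph of sufficiently large treewidth contains, as a subgraph, a subdivision of a large wall in which every branch-path is long --- with the elementary observation that such a wall contains every fixed member of $\mathcal{S}$ (a bounded number of vertex-disjoint subdivided stars with at most three leaves) as a subgraph. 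The unbounded direction is then immediate: when $H\notin\mathcal{S}$ (it has a cycle, or is a forest with a component having at least four leaves --- so, in the latter case, $H$ has a vertex of degree at least $4$ or two vertices of degree $3$) a sufficiently subdivided large wall is $H$-subgraph-free; when $H$ is non-planar, subdivided walls, being planar, exclude $H$ for the minor, topological-minor, induced-minor and induced-topological-minor relations; and when $H$ is planar with a vertex of degree at least $4$, the subcubic graphs are $H$-topological-minor-free.

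For the three ``induced'' relations (statements~4--6) bounded treewidth is hopeless --- complete graphs are $H$-free for every relevant $H$ yet have unbounded treewidth --- so on the bounded side one must build $\alpha$-bounded tree decompositions directly, while the unbounded side still proceeds via the repertoire above: since the good obstructions that occur are very restricted, every bad $H$ contains one of a short finite list of minimal bad graphs (for statement~4 these are $K_3$, $C_4$, $K_2\cup K_1$ and $K_{1,3}$, avoided respectively by walls, by walls, by $K_{n,n}$ and by line graphs of subcubic walls), and similarly for statements~5 and~6. On the bounded side, statement~4 is easy: an edgeless $H$ forces bounded independence number (one bag suffices), and an $H$ that is an induced subgraph of $P_3$ forces the class into the disjoint unions of cliques, which have tree-independence number at most $1$; and the base case $C_4$ of statement~5 is classical, since being $C_4$-induced-topological-minor-free means having no induced cycle of length at least four, i.e.\ being chordal, and chordal graphs have tree-independence number at most $1$ (cf.\ \cref{sufficient conditions for bounded tree-alpha}).

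The hard part --- and the main obstacle in the whole theorem --- is the bounded direction for the four remaining ``root'' graphs: $K_4^-$ for the induced-topological-minor relation, and $W_4$, $K_5^-$, and $K_{2,q}$ for the induced-minor relation. For each of these one must exhibit, for every graph in the corresponding $H$-free class, a tree decomposition all of whose bags have bounded independence number, which calls for genuine structural analyses of these classes --- roughly, controlling the interaction between their clique cutsets, minimal separators, and large independent sets --- together with a polynomial-time algorithm computing such a decomposition. This is where the difficulty of \cref{dichotomy tin} is concentrated, and it is carried out in the third paper of the series.
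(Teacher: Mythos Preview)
This theorem is not proved in the present paper: it is quoted from the third paper of the series~\cite{dallard2022secondpaper} and stated only ``for completeness,'' so there is no proof here against which to compare your proposal. Your outline is a sensible roadmap --- the reduction of statements~1--3 to the classical bounded-treewidth dichotomies via \cref{bounded tin implies bounded tw-omega} and \cref{tree-independence number bounded by treewidth} is correct, and your repertoire of obstruction families for the unbounded directions (walls, $K_{n,n}$, line graphs of walls) is the standard one --- but you yourself acknowledge that the substantive content (the bounded direction for $K_4^-$, $W_4$, $K_5^-$, and $K_{2,q}$) is deferred to~\cite{dallard2022secondpaper}, which is exactly what the present paper does too. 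In that sense your proposal is not a proof but an accurate sketch of where the work lies, and the present paper supplies no more.
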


In particular, this result leads to a new infinite family of graph classes in which the \textsc{Max Weight Independent Set} problem is solvable in polynomial time.
We refer to~\cite{dallard2022secondpaper} for details.

Since the tree-independence number is a newly introduced graph invariant, we also provide a set of results about its basic properties (see \cref{sec:basic}).
These include a characterization of graphs with tree-independence number at most one (as exactly the chordal graphs), the \NP-hardness of the problem of computing the tree-independence number, two sharp upper bounds in terms of the independence number and the treewidth, the monotonicity under vertex deletions and edge contractions, and the behavior under clique cutsets.

\medskip
\noindent\textbf{Note.}
After the paper was submitted for publication, we became aware of the work of Yolov~\cite{MR3775804}, who considered two parameters based on tree decompositions, including the tree-independence number (named \emph{$\alpha$-treewidth} and denoted by $\alpha$-$\tw$).
He gave an algorithm that, given an $n$-vertex graph $G$ and an integer $k$, in time $n^{\mathcal{O}(k^3)}$ either computes a tree decomposition of $G$ with independence number $\mathcal{O}(k^3)$ or correctly determines that the tree-independence number of $G$ is larger than $k$.
In particular, this means that the tree-independence number is efficiently witnessed whenever it is bounded.
Yolov's result was recently improved further by Dallard, Golovach, Fomin, Korhonen, and Milani{\v c}~\cite{dallard2022computing}, who improved the running time to $2^{\mathcal{O}(k^2)} n^{\mathcal{O}(k)}$ and the upper bound on the independence number of the computed tree decomposition to $8k$.

\subsection{Overview of our methodology}

We now summarize the key ideas leading to our main results.

We develop polynomial-time algorithms for the \textsc{Max Weight Independent Packing} problem in graph classes with bounded and efficiently witnessed tree-independence number in two steps.
First, we develop a dynamic programming algorithm for the \textsc{Max Weight Independent Set} problem, assuming that the input graph is equipped with a tree decomposition of bounded independence number (\cref{thm:bounded-tree-independence-number}).
We do this by adapting the standard dynamic programming approach for the \textsc{Max Weight Independent Set} problem over tree decompositions of bounded width to the more general setting of tree decompositions with bounded independence number.
We in fact derive an improved version of this algorithmic result by introducing the concept of an \emph{$\ell$-refined tree decomposition}, which is a tree decomposition in which each bag comes equipped with a subset of at most $\ell$ vertices (see \cref{sec:ell-refined}).
Ignoring such vertices when computing the independence number of the subgraph induced by the bag may lead to better running times
(\cref{thm:bounded-ell-refined-tree-independence-number}).
Two particular examples of such improvements are given in~\cite{dallard2022secondpaper}.
Second, we show that the \textsc{Max Weight Independent Packing} problem can be solved in polynomial time if the input graph is equipped with an $\alpha$-bounded tree decomposition (\cref{thm:bounded-tree-independence-number-packings}).
We do this by reducing the problem to the \textsc{Max Weight Independent Set} problem in the derived graph $G(\HH)$.
More specifically, we show that in this case one can compute in polynomial time both the graph $G(\HH)$ as well as a tree decomposition $\mathcal{T}'$ of $G(\HH)$ such that the independence number of $\mathcal{T}'$ is at most $k$, where $k$ is a fixed upper bound on the independence number of the tree decomposition $\mathcal{T}$ of the input graph $G$ (\cref{tin-of-G(H)-algorithmic}).

\subsection{Relation to previous work}

We asked in~\cite{DMS-WG2020,dallard2021treewidth} whether every $(\tw,\omega)$-bounded graph class is in fact \emph{polynomially $(\tw,\omega)$-bounded}, in the sense that the treewidth of graphs in the class and all their induced subgraphs is bounded by a fixed \emph{polynomial} function of the clique number.
This is indeed the case for graph classes with bounded tree-independence number (\cref{bounded tin implies bounded tw-omega}).
In general, the notion of the tree-independence number may give some insight into the properties of a possible counterexample to the question about the equality of $(\tw,\omega)$-boundedness and polynomial $(\tw,\omega)$-boundedness: any $(\tw,\omega)$-bounded graph class that is not polynomially $(\tw,\omega)$-bounded has to contain graphs with arbitrarily large tree-independence number.

\begin{sloppypar}
The concept of tree-independence number is also related to $\chi$-boundedness.
In the late 1980s, Gy\'arf\'as introduced the concept of $\chi$-bounded graph classes in order to generalize perfection~\cite{MR951359}.
The \emph{chromatic number} of $G$ is the smallest positive integer $k$ such that $G$ is $k$-colorable (that is, its vertex set can be expressed as the union of $k$ independent sets) and a graph class $\mathcal{G}$ is said to be \emph{$\chi$-bounded} if the chromatic number of the graphs in the class and all their induced subgraphs is bounded from above by some function of the clique number.
The $\chi$-bounded graph classes were studied extensively in the literature (see~\cite{MR4174126} for a survey).
Every $(\tw,\omega)$-bounded graph class is $\chi$-bounded (see, e.g.,~\cite{dallard2021treewidth}).
Esperet asked whether every $\chi$-bounded graph class is \emph{polynomially $\chi$-bounded}, that is, the chromatic number of the graphs in the class and all their induced subgraphs is bounded by a polynomial function of the clique number (see~\cite{esperet2017graph}).
For general classes, Esperet's question was recently answered in the negative by Bria\'nski, Davies, and Walczak~\cite{brianski2023separating}.
Nonetheless, since the chromatic number of any graph is bounded from above by its treewidth plus one, we obtain the following.
\end{sloppypar}

\begin{fact}
Graph classes with bounded tree-independence number are polynomially $\chi$-bounded.
\end{fact}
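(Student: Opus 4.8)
The plan is to deduce the statement immediately from two ingredients: the polynomial $(\tw,\omega)$-boundedness of graph classes with bounded tree-independence number, established in \cref{bounded tin implies bounded tw-omega}, and the elementary inequality $\chi(G)\le\tw(G)+1$, valid for every graph $G$ (a graph of treewidth $t$ is $t$-degenerate and hence greedily $(t+1)$-colorable). All the real work is in \cref{bounded tin implies bounded tw-omega}; what is left is essentially a one-line composition.

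In detail, suppose $\mathcal{G}$ has tree-independence number bounded by an integer $k$, and let $p$ be the polynomial of degree $k$ furnished by \cref{bounded tin implies bounded tw-omega}, so that $\tw(H)\le p(\omega(H))$ whenever $H$ has tree-independence number at most $k$. Fix $G\in\mathcal{G}$ and an arbitrary induced subgraph $G'$ of $G$. Restricting any tree decomposition $\mathcal{T}$ of $G$ to $V(G')$ (replacing each bag $X$ by $X\cap V(G')$) yields a tree decomposition of $G'$ whose bags induce subgraphs of the corresponding bag-induced subgraphs of $G$, so its independence number does not exceed that of $\mathcal{T}$; hence $G'$ again has tree-independence number at most $k$. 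Therefore $\chi(G')\le\tw(G')+1\le p(\omega(G'))+1\le p(\omega(G))+1$, and the polynomial $q(x)=p(x)+1$ witnesses that $\mathcal{G}$ is polynomially $\chi$-bounded.

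I do not anticipate any real obstacle: the argument is just the concatenation of the two bounds above, with the only minor point being to verify that the tree-independence number is monotone under vertex deletions, so that the bound of \cref{bounded tin implies bounded tw-omega} applies to all induced subgraphs, as the definition of polynomial $\chi$-boundedness requires. (This monotonicity is in any case recorded among the basic properties in \cref{sec:basic}.)
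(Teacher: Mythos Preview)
Your proposal is correct and matches the paper's own argument essentially verbatim: the paper derives the fact in a single sentence from \cref{bounded tin implies bounded tw-omega} together with the inequality $\chi(G)\le \tw(G)+1$. Your added remark that the tree-independence number is monotone under vertex deletion (so the binding function applies to all induced subgraphs) is a welcome clarification; it is indeed recorded in \cref{lem:tree-indepencence number induced-minor}.
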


Our result on the polynomial-time solvability of the \textsc{Max Weight Independent Set} problem in classes of graphs equipped with a tree decomposition of bounded independence number (\cref{thm:bounded-tree-independence-number}) generalizes a result of Krause who showed in~\cite{DBLP:conf/cc/Krause13} that the \textsc{Max Independent Set} problem is solvable in polynomial time on intersection graphs of connected subgraphs of graphs of bounded treewidth (assuming that the input graph is given with an intersection model).
In some sense, our algorithmic result for the \textsc{Max Weight Independent Packing} problem (\cref{thm:bounded-tree-independence-number-packings}) generalizes the polynomial-time solvability of the \textsc{Max Induced Matching} problem in the class of chordal graphs~\cite{MR1011265}, of the \textsc{$k$-Separator} problem (for fixed $k$) in the classes of interval graphs and graphs of bounded treewidth~\cite{MR3296270}, and of the \textsc{Independent $\mathcal{F}$-Packing} problem in the classes of chordal and circular-arc graphs~\cite{MR2190818}.

On a more general note, let us remark that the tree-independence number shares the property of several graph invariants studied in the literature in the sense that they are defined by the existence of a tree decomposition with certain properties of the bags.
These include connectivity properties (connected treewidth~\cite{MR3800845}, also known as bag-connected treewidth~\cite{jegou2014tree}), metric properties (tree-length~\cite{MR2326163} and tree-breadth~\cite{MR3209756}), and chromatic properties (tree-chromatic number~\cite{MR3425243}).

\subsection{Organization of the paper}

In \cref{sec:preliminaries}, we provide the necessary preliminaries.
In \cref{sec:basic}, we derive several basic properties of the tree-independence number, including the \NP-hardness of the problem of computing it.
In \cref{sec:ell-refined}, we introduce the concepts of $\ell$-refined tree decompositions and their residual independence numbers.
In \cref{sec:MWIS}, we prove that if $\mathcal{G}$ is a graph class with bounded and efficiently witnessed tree-independence number, then there exists a polynomial-time algorithm for the \textsc{Max Weight Independent Set} problem when the input graph is restricted to belong to $\mathcal{G}$.
We show in \cref{sec:packing} that for any graph $G$ and any family $\HH$ of connected subgraphs of $G$, the derived graph $G(\HH)$ has tree-independence number bounded by that of $G$.
This allows us to generalize the algorithmic result of \cref{sec:MWIS} by developing, in \cref{sec:implications}, an efficient algorithm for the \textsc{Max Weight Independent Packing} problem, provided that the input graph $G$ is given along with an $\alpha$-bounded tree decomposition.
We conclude the paper with some open questions in \cref{sec:open-questions}.

\section{Preliminaries}\label{sec:preliminaries}

Throughout the paper, we denote by $\mathbb{Z}_+$ the set of nonnegative integers and by $\mathbb{Q}_+$ the set of nonnegative rational numbers.
We assume familiarity with the basic concepts in graph theory as used, e.g., by West~\cite{MR1367739}.
We denote the vertex set and the edge set of a graph $G$ by $V(G)$ and $E(G)$, respectively.
A graph is \emph{null} if it has no vertices.
The \emph{neighborhood} of a vertex $v$ in $G$, which corresponds to the set of vertices adjacent to $v$ in $G$, is denoted by $N_G(v)$.
The \emph{closed neighborhood} of $v$ is the set $N_G[v] = N_G(v) \cup \{v\}$.
These concepts are extended to sets $X\subseteq V(G)$ so that $N_G[X]$ is defined as the union of all closed neighborhoods of vertices in $X$, and $N_G(X)$ is defined as the set $N_G[X]\setminus X$.
The \emph{degree} of $v$, denoted by $d_G(v)$, is the cardinality of the set $N_G(v)$.
When there is no ambiguity, we may omit the subscript $G$ in the notations of the degree, and open and closed neighborhoods, and thus simply write $d(v)$, $N(v)$, and $N[v]$, respectively.
Given a set $X \subseteq V(G)$, we denote by $G-X$ the graph obtained from $G$ after deleting all the vertices in $X$, and by $G[X]$ the subgraph of $G$ induced by $X$, that is, the graph $G-(V(G)\setminus X)$.
Similarly, given a vertex $v \in V(G)$, we denote by $G-v$ the graph obtained from $G$ after deleting $v$.
The fact that two graphs $G$ and $H$ are isomorphic to each other is denoted by $G\cong H$.
For a positive integer $n$, we denote the $n$-vertex complete graph, path, and cycle by $K_n$, $P_n$, and $C_n$, respectively.
Similarly, for $n\ge 2$ we denote by $K_n^{-}$ the graph obtained from the complete graph $K_n$ by deleting an edge, and for positive integers $m$ and $n$ we denote by $K_{m,n}$ the complete bipartite graph with parts of sizes $m$ and $n$.
The graph $K_4^-$ is also called the \emph{diamond}.
For $n\ge 4$, we denote by $W_n$ the graph obtained from a cycle $C_n$ by adding to it a universal vertex (that is, a vertex adjacent to all vertices of the $C_n$).

An \emph{independent set} in a graph $G$ is a set of pairwise non-adjacent vertices, and a \emph{clique} is a set of pairwise adjacent vertices.
The \emph{independence number} of $G$, denoted by $\alpha(G)$, is the maximum size of an independent set in~$G$.
The \emph{clique number} of $G$, denoted by $\omega(G)$, is the maximum size of a clique in~$G$.
Given two non-adjacent vertices $u$ and $v$ in $G$, a \emph{$u{,}v$-separator} is a set $S\subseteq V(G)\setminus \{u,v\}$ such that $u$ and $v$ belong to different components of $G-S$.
A $u{,}v$-separator $S$ is \emph{minimal} if no proper subset of $S$ is a $u{,}v$-separator.
A \emph{minimal separator} in $G$ is a set $S\subseteq V(G)$ that is a minimal $u{,}v$-separator for some non-adjacent vertex pair $u{,}v$.

A graph $H$ is said to be an \emph{induced subgraph} of $G$ if $H$ can be obtained from $G$ by deleting vertices. 
If $H$ is obtained from $G$ by deleting vertices and edges, then $H$ is a \emph{subgraph} of $G$.
The \emph{subdivision of an edge} $uv$ of a graph is the operation that deletes the edge $uv$ and adds a new vertex $w$ and two edges $uw$ and $wv$.
A \emph{subdivision of a graph} $H$ is a graph obtained from $H$ by a sequence of edge subdivisions.
A graph $H$ is said to be a \emph{topological minor} of a graph $G$ if $G$ contains a subdivision of $H$ as a subgraph.
Similarly, $H$ is an \emph{induced topological minor} of $G$ if some subdivision of $H$ is isomorphic to an induced subgraph of $G$.
An \emph{edge contraction} is the operation of deleting a pair of adjacent vertices and replacing them with a new vertex whose neighborhood is the union of the neighborhoods of the two original vertices.
If $H$ can be obtained from $G$ by a sequence of vertex deletions, edge deletions, and edge contractions, then $H$ is said to be a \emph{minor} of $G$. 
Finally, we say that $G$ contains $H$ as an \emph{induced minor} if $H$ can be obtained from $G$ by a sequence of vertex deletions and edge contractions.
If $G$ does not contain an induced subgraph isomorphic to $H$, then we say that $G$ is \emph{$H$-free}. 
Analogously, we also say that $G$ is $H$-subgraph-free, $H$-topological-minor-free, $H$-induced-topological-minor-free, $H$-minor-free, or $H$-induced-minor-free, respectively, for the other five graph containment relations.

A \emph{tree decomposition} of a graph $G$ is a pair $\mathcal{T} = (T, \{X_t\}_{t\in V(T)})$ where $T$ is a tree and every node $t$ of $T$ is assigned a vertex subset $X_t\subseteq V(G)$ called a bag such that the following conditions are satisfied:
every vertex is in at least one bag, for every edge $uv\in E(G)$ there exists a node $t\in V(T)$ such that $X_t$ contains both $u$ and $v$, and for every vertex $u\in V(G)$ the subgraph $T_u$ of $T$ induced by the set $\{t\in V(T): u\in X_t\}$ is connected (that is, a tree).
The \emph{width} of $\mathcal{T}$, denoted by ${\it width}(\mathcal{T})$, is the maximum value of $|X_t|-1$ over all $t\in V(T)$.
The \emph{treewidth} of a graph $G$, denoted by $\tw(G)$, is defined as the minimum width of a tree decomposition of $G$.

We next state two useful properties of tree decompositions.

\begin{lemma}[Scheffler~\cite{MR1090614}, Bodlaender and M\"ohring~\cite{MR1215226}]\label{clique in bag}
Let $G$ be a graph, let $\mathcal{T} = (T, \{X_t\}_{t\in V(T)})$ be a tree decomposition of $G$, and let $C$ be a clique in $G$.
Then there exists a bag $X_t$ such that $C\subseteq X_t$.
\end{lemma}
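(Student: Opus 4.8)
The plan is to reduce the statement to the classical Helly property for subtrees of a tree: any family of pairwise intersecting subtrees of a tree has a common node. Granting this, the lemma is immediate. For each vertex $u\in V(G)$, let $T_u$ be the subtree of $T$ induced by $\{t\in V(T):u\in X_t\}$; this is connected by the definition of a tree decomposition. If $u,v\in C$ with $u\ne v$, then $uv\in E(G)$, so some bag $X_t$ contains both $u$ and $v$, whence $t\in V(T_u)\cap V(T_v)$ and $T_u,T_v$ intersect. Since $C$ is a clique, the subtrees in $\{T_u:u\in C\}$ pairwise intersect, so by the Helly property there is a node $t$ lying in all of them, which is exactly the assertion that $C\subseteq X_t$.

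It remains to establish the Helly property, which I expect to carry all the actual content. I would prove it by induction on the number $m$ of subtrees; the cases $m\le 2$ are trivial. For the inductive step, applied to subtrees $T_{u_1},\dots,T_{u_m}$, replace $T_{u_{m-1}}$ and $T_{u_m}$ by their intersection: it is connected (the unique path in $T$ between any two of its vertices lies in each of the two subtrees, hence in the intersection), nonempty by hypothesis, and it still meets each $T_{u_i}$ with $i\le m-2$. This last point is the crux and uses the ``tripod'' lemma: three pairwise intersecting subtrees of a tree share a node --- proved by taking vertices $a\in T_i\cap T_j$, $b\in T_i\cap T_k$, $c\in T_j\cap T_k$ and checking that the median of $a,b,c$ lies in all three. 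Applying the induction hypothesis to the reduced family of $m-1$ subtrees gives a common node, which then also belongs to $T_{u_{m-1}}$ and $T_{u_m}$.

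Alternatively, and perhaps more cleanly, one can bypass the Helly property by a direct rooting argument. Root $T$ at an arbitrary node; for each $u\in C$ let $t_u$ be the node of $T_u$ closest to the root, and let $t:=t_w$ be a node of maximum depth among all $t_u$, $u\in C$. For any $u\in C$ pick a node $s$ with $u,w\in X_s$, so $s\in V(T_u)\cap V(T_w)$; since $t_u$ and $t_w$ are the respective topmost nodes of these subtrees, both are ancestors of $s$, hence comparable, and maximality of the depth of $t_w$ forces $t_u$ to be a weak ancestor of $t=t_w$. Then $t$ lies on the $t_u$--$s$ path, which is contained in the connected set $T_u$, so $t\in V(T_u)$, i.e. $u\in X_t$. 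As $u\in C$ was arbitrary, $C\subseteq X_t$.

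The main obstacle is entirely the tree combinatorics underlying the Helly property --- equivalently, the correct bookkeeping of ancestor relations in the rooting argument. The translation between cliques of $G$ and pairwise intersecting subtrees of $T$ is a direct consequence of the definition of a tree decomposition and needs no work.
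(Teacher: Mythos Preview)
Your proposal is correct. Note that the paper does not actually prove this lemma; it is quoted from the literature. However, the paper does prove the slightly more general \cref{conformality-of-bags} (a set $S$ whose pairs are each contained in some bag is contained in a single bag), and its proof is exactly your first approach: pass to the subtrees $T_v$ and invoke the Helly property for subtrees of a tree, which the paper simply cites as ``known (and easy to see)'' without proof. So your Helly route matches the paper's treatment, with the bonus that you actually supply a proof of the Helly property.

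Your second argument, the rooting/topmost-node argument, is a genuinely different and self-contained route that avoids stating the Helly property separately. It is correct as written: the key step---that $t_u$ and $t_w$ are both ancestors of the common node $s$, hence comparable, hence $t$ lies on the $t_u$--$s$ path inside $T_u$---is sound. Compared to the Helly approach, this buys you a shorter, induction-free proof tailored to the specific setting of a rooted tree, at the cost of being less modular (the Helly property is a reusable black box, as the paper's use of it in \cref{conformality-of-bags} illustrates).
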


\begin{corollary}\label{clique-tw-bound}
Every graph $G$ satisfies $\tw(G)\ge \omega(G)-1$.
\end{corollary}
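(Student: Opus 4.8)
The plan is to derive \cref{clique-tw-bound} as an immediate consequence of \cref{clique in bag}. Let $G$ be any graph and let $\mathcal{T} = (T, \{X_t\}_{t \in V(T)})$ be an arbitrary tree decomposition of $G$. Take a clique $C$ in $G$ of maximum size, so that $|C| = \omega(G)$. By \cref{clique in bag}, there is a node $t \in V(T)$ with $C \subseteq X_t$, and therefore $|X_t| \ge |C| = \omega(G)$. By the definition of the width of a tree decomposition, $\mathrm{width}(\mathcal{T}) \ge |X_t| - 1 \ge \omega(G) - 1$.

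Since this holds for every tree decomposition $\mathcal{T}$ of $G$, taking the minimum over all such decompositions gives $\tw(G) = \min_{\mathcal{T}} \mathrm{width}(\mathcal{T}) \ge \omega(G) - 1$, as claimed.

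There is essentially no obstacle here: the entire content is packaged in \cref{clique in bag}, and the corollary is just unwinding the definitions of width and treewidth. The only minor point worth handling with care is the degenerate case where $G$ has no edges (or no vertices), so that $\omega(G) \le 1$; then the inequality $\tw(G) \ge \omega(G) - 1$ reads $\tw(G) \ge 0$, which holds trivially. Thus the proof is a one-line application of the preceding lemma, and I would present it exactly as above.
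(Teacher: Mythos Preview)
Your proof is correct and is exactly the intended derivation: the paper states this corollary immediately after \cref{clique in bag} without proof, and your argument is the obvious one-line application of that lemma together with the definition of treewidth.
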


\begin{lemma}\label{conformality-of-bags}
Let $\mathcal{T} = (T,\{X_t\}_{t\in V(T)})$ be a tree decomposition of a graph $G$.
Then for every set $S\subseteq V(G)$ such that every pair of vertices in $S$ is contained in a bag of $\mathcal{T}$, there exists a bag $X_t$ such that $S\subseteq X_t$.
\end{lemma}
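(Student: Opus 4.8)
The plan is to reduce the statement to the Helly property of subtrees of a tree. For each vertex $u \in S$, the set $T_u := \{t \in V(T) : u \in X_t\}$ induces a nonempty connected subtree of $T$, by definition of a tree decomposition. The hypothesis that every pair $u, v \in S$ lies in a common bag says exactly that $T_u \cap T_v \neq \emptyset$ for all $u, v \in S$; that is, $\{T_u\}_{u \in S}$ is a pairwise-intersecting family of subtrees of $T$. So it suffices to prove that $\bigcap_{u \in S} T_u \neq \emptyset$: any node $t$ in this common intersection satisfies $u \in X_t$ for every $u \in S$, hence $S \subseteq X_t$, as desired.

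To show the common intersection is nonempty, I would root $T$ at an arbitrary node $r$ and, for each $u \in S$, let $p_u$ be the unique node of $T_u$ of minimum depth (the apex of $T_u$), which is well defined because $T_u$ is connected. Pick $u^\star \in S$ maximizing $\mathrm{depth}(p_{u^\star})$. I claim $p_{u^\star} \in T_v$ for every $v \in S$, which completes the proof with $t := p_{u^\star}$. Fix $v \in S$ and, using the hypothesis, pick $w \in T_{u^\star} \cap T_v$. Since $w \in T_{u^\star}$ and $p_{u^\star}$ is the apex of $T_{u^\star}$, the node $p_{u^\star}$ is an ancestor of $w$; similarly $p_v$ is an ancestor of $w$. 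The ancestors of $w$ form a path, so $p_{u^\star}$ and $p_v$ are comparable in the ancestor order, and by the choice of $u^\star$ we have $\mathrm{depth}(p_v) \leq \mathrm{depth}(p_{u^\star})$, so $p_v$ is an ancestor of $p_{u^\star}$. Hence $p_{u^\star}$ lies on the $T$-path between $p_v$ and $w$, and since $p_v, w \in T_v$ and $T_v$ is connected, $p_{u^\star} \in T_v$.

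The one point that needs care — the main obstacle, such as it is — is the final step: once we know $p_v$ is an ancestor of $p_{u^\star}$, which in turn is an ancestor of $w$, we need that $p_{u^\star}$ genuinely lies on the unique $T$-path from $p_v$ to $w$. This is immediate because in a rooted tree the path from an ancestor to a descendant is precisely the chain of intermediate nodes, so connectivity of $T_v$ forces $p_{u^\star} \in T_v$. Alternatively, after the pairwise-intersection reformulation one can simply invoke the classical Helly property of subtrees of a tree; but the rooted-tree argument above is short and self-contained, so I would include it directly.
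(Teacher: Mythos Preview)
Your proof is correct and follows essentially the same approach as the paper: reduce to the Helly property of subtrees via the trees $T_u$, note that the pairwise-in-a-bag hypothesis gives pairwise intersection, and conclude that there is a common node. The only difference is that the paper simply invokes the Helly property as ``known (and easy to see),'' whereas you supply a self-contained rooted-tree proof of it; this is a fine addition and the argument is sound.
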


\begin{proof}
Suppose that every pair of vertices of $S$ is contained in a bag of $\mathcal{T}$. For each $v\in S$, the set of nodes of $T$ labeled by bags containing $v$ induces a subtree $T_v$ of $T$. Since every two vertices in $S$ are contained in a bag of $\mathcal{T}$, every two of the subtrees
in $\{T_v: v\in S\}$ have a node in common.
It is known (and easy to see) that any family of node sets of subtrees of a tree has the Helly property, and thus there exists a node $t\in V(T)$ common to all the trees $T_v$, $v\in S$. We infer that $v\in X_t$ for all $v\in S$, that is, $S\subseteq X_t$.
\end{proof}

There are many equivalent characterizations of treewidth, including one in terms of chordal graphs.
A graph $G$ is \emph{chordal} if it has no induced cycles of length at least four.
Given a graph $G = (V,E)$, a \emph{triangulation} of $G$ is a chordal graph of the form
$(V,E\cup F)$ where $F$ is a subset of the set of non-edges of $G$.
A triangulation $(V,E \cup F)$ of $G$ is a \emph{minimal triangulation} of $G$ if $G$ has no triangulation $(V,E\cup F')$ such that $F'$ is a proper subset of $F$.

\begin{theorem}[Robertson and Seymour \cite{MR855559}]\label{tw-via-triangulations}
For every graph $G$, we have
\[\tw(G) = \min\{\omega(H)-1 : H \textrm{ is a triangulation of } G\}\,.\]
\end{theorem}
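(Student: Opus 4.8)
The plan is to prove the two inequalities separately.

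For the direction $\tw(G) \le \min\{\omega(H)-1 : H \text{ is a triangulation of } G\}$, I would fix an arbitrary triangulation $H = (V(G), E(G)\cup F)$ of $G$ and observe that every tree decomposition of $H$ is automatically a tree decomposition of $G$: passing from $H$ to $G$ only removes edges, while the vertex-coverage and connectivity conditions are unaffected and the edge-coverage condition can only become easier. Hence $\tw(G) \le \tw(H)$. It then suffices to show $\tw(H) = \omega(H)-1$ for every chordal $H$. The inequality $\tw(H) \ge \omega(H)-1$ is exactly \cref{clique-tw-bound}. For the reverse inequality I would invoke the well-known fact that a chordal graph admits a \emph{clique tree} --- a tree decomposition whose bags are precisely its maximal cliques (equivalently, one builds such a decomposition from a perfect elimination ordering); since each maximal clique has at most $\omega(H)$ vertices, this decomposition has width at most $\omega(H)-1$. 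Combining, $\tw(G) \le \omega(H)-1$ for every triangulation $H$, and taking the minimum over all triangulations yields the bound.

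For the reverse direction, I would start from an optimal tree decomposition $\mathcal{T} = (T, \{X_t\}_{t\in V(T)})$ of $G$ of width $k = \tw(G)$ and ``fill in'' every bag: let $H$ be the graph on $V(G)$ whose edge set is $E(G)$ together with all pairs $uv$ that lie in a common bag $X_t$. By construction $H$ is obtained from $G$ by adding non-edges, and $\mathcal{T}$ is a tree decomposition of $H$ in which every bag induces a clique. Two things then need checking: (i) $\omega(H) = \max_t |X_t| = k+1$, where $\ge$ holds because each bag is a clique and $\le$ follows from \cref{clique in bag} applied to $H$ and $\mathcal{T}$; and (ii) $H$ is chordal, so that $H$ is genuinely a triangulation of $G$. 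Granting both, $H$ is a triangulation of $G$ with $\omega(H)-1 = k = \tw(G)$, so the minimum on the right-hand side is at most $\tw(G)$, which completes the proof.

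The heart of the argument is step (ii): a graph $H$ possessing a tree decomposition all of whose bags are cliques must be chordal. I would prove this by induction on $|V(H)|$. If $T$ is a single node, $H$ is a clique and we are done. Otherwise, while some leaf $t$ of $T$ with neighbor $t'$ satisfies $X_t \subseteq X_{t'}$, delete $t$ (this strictly decreases $|V(T)|$, so the process terminates); we may thus assume no such leaf exists. Pick a leaf $t$ with neighbor $t'$ and some $v \in X_t \setminus X_{t'}$. Since the set of nodes whose bags contain $v$ induces a subtree of $T$ that contains $t$ but not $t'$, and $t'$ is the only neighbor of $t$, that subtree is $\{t\}$; hence $v$ appears in no other bag, so $N_H(v) \subseteq X_t$. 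As $X_t$ is a clique, $v$ is simplicial in $H$. Now $H-v$ inherits the tree decomposition $(T, \{X_t\setminus\{v\}\})$ with all bags still cliques, so by induction $H-v$ is chordal; and adding back the simplicial vertex $v$ preserves chordality, since any induced cycle of length at least four through $v$ would force two neighbors of $v$ --- both inside the clique $N_H(v)$ --- to be non-adjacent, a contradiction. Thus $H$ is chordal. I expect this inductive argument to be the main technical point, although it is entirely elementary; the other direction is essentially bookkeeping together with the standard clique-tree fact about chordal graphs.
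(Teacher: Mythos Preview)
The paper does not give its own proof of this theorem; it is quoted as a known result of Robertson and Seymour and is used as a black box (e.g., in the proofs of \cref{tree-independence number bounded by treewidth} and \cref{tw approx for ell-refined}). So there is nothing in the paper to compare your argument against directly.

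That said, your proof is correct and self-contained. Both directions are handled cleanly, and the inductive argument for step~(ii) is valid. One remark: the paper later records, as \cref{chordal-clique-tree}, the characterization of chordal graphs as intersection graphs of subtrees of a tree, and uses exactly this to prove \cref{minimum-degree} and \cref{chordal}. You could shortcut your inductive proof of~(ii) the same way: once every bag of $\mathcal{T}$ is a clique in $H$, two vertices $u,v$ are adjacent in $H$ if and only if the subtrees $T_u$ and $T_v$ share a node, so $H$ is the intersection graph of subtrees in $T$ and hence chordal by \cref{chordal-clique-tree}. This is equivalent to what you did but avoids the simplicial-vertex induction. Conversely, your direct argument has the virtue of not relying on \cref{chordal-clique-tree}, which in the paper appears \emph{after} \cref{tw-via-triangulations}; so your ordering is logically cleaner if one wanted to make the exposition fully self-contained.
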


Our approach in proving the \NP-hardness of computing the tree-independence number in \cref{sec:basic} relies on the fact that every tree decomposition has a bag containing the closed neighborhood of some vertex (see \cref{minimum-degree}).
This property of tree decompositions implies the inequality $\tw(G)\ge \delta(G)$, where $\delta(G)$ denotes the minimum degree of a vertex in $G$, valid for any graph $G$ (see, e.g., \cite{MR2829452}).

In order to prove \cref{minimum-degree}, we first recall some of the many characterizations of chordal graphs.
A \emph{clique tree} of a graph $G$ is a tree decomposition of $G$ such that the bags are exactly the maximal cliques of $G$.
Given a collection $\{T_1,\ldots, T_n\}$ of subtrees in a tree $T$, the \emph{intersection graph} of $\{T_1,\ldots, T_n\}$ is the graph with vertex set $\{1,\ldots, n\}$, in which two distinct vertices $i$ and $j$ are adjacent if and only if $T_i$ and $T_j$ have a vertex in common.
Given a graph $G$, a \emph{simplicial vertex} in $G$ is a vertex whose neighborhood is a clique.
The following theorem combines results from several sources~\cite{MR130190,MR357218,MR505894,MR332541,MR1320296}.

\begin{theorem}\label{chordal-clique-tree}
Let $G$ be a graph. Then, the following conditions are equivalent:
\begin{enumerate}
\item $G$ is a chordal graph.
\item $G$ has a clique tree.
\item $G$ is the intersection graph of subtrees in a tree.
\item Every nonnull induced subgraph of $G$ has a simplicial vertex.
\end{enumerate}
\end{theorem}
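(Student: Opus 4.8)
The plan is to prove all four statements equivalent by establishing the cycle of implications $(1)\Rightarrow(4)\Rightarrow(2)\Rightarrow(3)\Rightarrow(1)$; each implication is classical, and I record the argument I would use. For $(1)\Rightarrow(4)$, I would first note that chordality is inherited by induced subgraphs (an induced cycle of length at least four in $G[X]$ is also an induced cycle of $G$), so it suffices to show that every nonnull chordal graph has a simplicial vertex. I would prove the stronger statement, by induction on $|V(G)|$, that every chordal graph that is not complete has two non-adjacent simplicial vertices, the case of complete graphs being trivial. The crucial auxiliary fact is that in a chordal graph every minimal separator $S$ is a clique: if $x,y\in S$ were non-adjacent and $S$ were a minimal $a,b$-separator with $C_a,C_b$ the components of $G-S$ containing $a$ and $b$, then, using the standard property that every vertex of a minimal separator has a neighbour in both $C_a$ and $C_b$, one finds a shortest $x$--$y$ path with interior in $C_a$ and one with interior in $C_b$, and their union is a chordless cycle of length at least four, a contradiction. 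Given a non-complete chordal $G$, I would pick non-adjacent $a,b$, a minimal $a,b$-separator $S$, and the components $C_a,C_b$, and apply the induction hypothesis to the chordal graphs $G[C_a\cup S]$ and $G[C_b\cup S]$; since $S$ is a clique, a vertex simplicial in one of these graphs that lies outside $S$ is simplicial in $G$, which produces a simplicial vertex of $G$ in $C_a$ and one in $C_b$, and these are non-adjacent.

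For $(4)\Rightarrow(2)$, I would argue by induction on $|V(G)|$. If $v$ is a simplicial vertex of $G$, then $N_G[v]$ is a maximal clique of $G$ and the only one containing $v$. By induction $G-v$ has a clique tree, and since $N_G(v)$ is a clique of $G-v$, \cref{clique in bag} gives a node $s$ whose bag contains $N_G(v)$. If $N_G(v)$ is itself a maximal clique of $G-v$, I would modify the clique tree so that the bag equal to $N_G(v)$ becomes $N_G[v]$; otherwise I would attach a new node with bag $N_G[v]$ to $s$. In either case one verifies that the new family of bags is exactly the set of maximal cliques of $G$ and that the three tree-decomposition conditions still hold, the connectivity condition for a vertex $u\in N_G(v)$ using that the corresponding subtree already contains $s$. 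For $(2)\Rightarrow(3)$, given a clique tree $\mathcal T=(T,\{X_t\}_{t\in V(T)})$ of $G$, I would associate to each vertex $u$ the nonnull connected subtree $T_u=T[\{t:u\in X_t\}]$; since each bag is a clique, $u$ and $v$ lying in a common bag implies $uv\in E(G)$, and conversely \cref{clique in bag} applied to $\{u,v\}$ shows that every edge lies in a common bag, so $T_u\cap T_v\neq\emptyset$ if and only if $uv\in E(G)$, whence $G$ is the intersection graph of $\{T_u\}_{u\in V(G)}$.

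For $(3)\Rightarrow(1)$, suppose $G$ is the intersection graph of subtrees $\{T_u\}$ of a tree $T$; I would root $T$ arbitrarily and call the \emph{top} of a nonnull subtree its node closest to the root, which is an ancestor of all of its nodes. If $v_1v_2\cdots v_k$ with $k\geq 4$ were an induced cycle, I would choose $j$ (indices modulo $k$) so that the top $r_j$ of $T_{v_j}$ is as deep as possible. For either cycle-neighbour $v_\ell$ of $v_j$, pick a node $x\in T_{v_j}\cap T_{v_\ell}$; the top $r_\ell$ of $T_{v_\ell}$ and the node $r_j$ both lie on the path from $x$ to the root, and since $r_j$ is at least as deep as $r_\ell$, the node $r_j$ lies on the $x$--$r_\ell$ path, hence belongs to $T_{v_\ell}$. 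Applying this to both neighbours of $v_j$ gives $r_j\in T_{v_{j-1}}\cap T_{v_{j+1}}$, so $v_{j-1}v_{j+1}\in E(G)$, contradicting that the cycle is induced (here $k\geq 4$ guarantees that $v_{j-1}$ and $v_{j+1}$ are distinct and non-adjacent).

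I expect the main obstacle to be the implication $(4)\Rightarrow(2)$: building the tree is easy, but confirming that the updated family of bags is \emph{exactly} the set of maximal cliques of $G$ --- ruling out that an old bag stops being maximal, and that no maximal clique other than $N_G[v]$ is created --- requires a careful if routine case analysis hinging on whether $N_G(v)$ is a maximal clique of $G-v$. The other potentially delicate point is the rooting argument in $(3)\Rightarrow(1)$, but there the only subtlety is the observation that the top of a subtree is an ancestor of the whole subtree.
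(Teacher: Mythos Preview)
Your proof is correct; each of the four implications is handled by a standard and valid argument, and the case analysis in $(4)\Rightarrow(2)$ as well as the rooting argument in $(3)\Rightarrow(1)$ go through as you describe.

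However, there is nothing to compare against: the paper does not prove \cref{chordal-clique-tree}. It is stated as a classical characterization of chordal graphs and attributed to several sources in the literature (Dirac, Gavril, Buneman, Walter, and Blair--Peyton), with no proof given. So your write-up supplies a self-contained argument where the paper simply cites the result.
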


\begin{lemma}\label{minimum-degree}
Let $G$ be a graph and let $\mathcal{T} = (T, \{X_t\}_{t\in V(T)})$ be a tree decomposition of $G$.
Then there exists a vertex $v\in V(G)$ and a node $t\in V(T)$ such that $N[v]\subseteq X_t$.
\end{lemma}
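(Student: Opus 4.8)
The plan is to pass from the tree decomposition $\mathcal{T}$ to its \emph{fill-in} graph and then exploit chordality. Define $H$ to be the graph on vertex set $V(G)$ in which two distinct vertices $u$ and $v$ are adjacent if and only if there is a node $t\in V(T)$ with $\{u,v\}\subseteq X_t$. First I would verify that $\mathcal{T}$ is also a tree decomposition of $H$: the vertex sets of $G$ and $H$ coincide, every edge of $H$ is covered by a bag by construction, and the connectedness condition for each vertex is inherited verbatim since the tree $T$ and the bags are unchanged. Moreover $G$ is a subgraph of $H$, because the edge condition of $\mathcal{T}$ says that every edge of $G$ lies in a common bag; hence $N_G[v]\subseteq N_H[v]$ for every vertex $v$.

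Next I would argue that $H$ is chordal. For each $v\in V(H)$ let $T_v$ be the subtree of $T$ induced by $\{t\in V(T): v\in X_t\}$, which is a genuine subtree by the definition of a tree decomposition. Directly from the definition of $H$, two distinct vertices $u,v$ are adjacent in $H$ precisely when some bag contains both of them, i.e.\ precisely when $V(T_u)\cap V(T_v)\neq\emptyset$. Thus $H$ is (isomorphic to) the intersection graph of the family $\{T_v\}_{v\in V(H)}$ of subtrees of the tree $T$, and \cref{chordal-clique-tree} implies that $H$ is chordal.

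Finally I would extract a simplicial vertex and the desired bag. Assuming $V(G)\neq\emptyset$, the graph $H$ is a nonnull chordal graph, so \cref{chordal-clique-tree} provides a simplicial vertex $v$ of $H$; then $N_H[v]$ is a clique of $H$. Applying \cref{clique in bag} to the tree decomposition $\mathcal{T}$ of $H$ and the clique $C=N_H[v]$ yields a node $t\in V(T)$ with $N_H[v]\subseteq X_t$. Since $N_G[v]\subseteq N_H[v]$, we get $N_G[v]\subseteq X_t$, which is exactly what is claimed.

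The only real content of the argument is the observation identifying $H$ with an intersection graph of subtrees of $T$, so that \cref{chordal-clique-tree} applies; the verifications that $\mathcal{T}$ is still a valid tree decomposition of $H$ and that $G$ is a subgraph of $H$ are routine. For completeness I would also note an alternative, self-contained route by induction on $|V(T)|$ that avoids $H$: pick a leaf $t$ of $T$ with neighbor $s$; if $X_t\subseteq X_s$, delete $t$ and apply induction (the bag obtained is still a bag of $\mathcal{T}$), and otherwise any $v\in X_t\setminus X_s$ has $T_v=\{t\}$, which forces every neighbor of $v$ into $X_t$ and hence $N_G[v]\subseteq X_t$. I would present the chordal-graph proof as the main one, since it is the one that meshes with the characterizations of chordal graphs recalled just above.
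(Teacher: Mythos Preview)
Your main argument is correct and is essentially identical to the paper's proof: both pass to the fill-in supergraph (your $H$, the paper's $G'$), identify it as the intersection graph of the subtrees $T_v$ of $T$ to deduce chordality via \cref{chordal-clique-tree}, pick a simplicial vertex, and apply \cref{clique in bag} to the clique $N_H[v]$. The alternative leaf-peeling induction you sketch at the end is a nice self-contained variant that the paper does not give (though it alludes to a similar argument from~\cite{MR2829452}).
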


\begin{proof}
Let $G'$ be the graph with vertex set $V(G)$ such that two distinct vertices $u$ and $v$ are adjacent in $G'$ if and only if there exists a bag $X_t$ of $\mathcal T$ with $u,v \in X_t$.
Note that for every vertex $v \in V(G)$ it holds that $N_G[v] \subseteq N_{G'}[v]$.
Recall that for each vertex $v\in V(G')$, the set of nodes $t\in V(T)$ such that $v\in X_t$ induces a subtree $T_v$ of $T$.
Thus, two distinct vertices $u$ and $v$ of $G'$ are adjacent if and only if the corresponding trees $T_u$ and $T_v$ have a node in common.
This means that $G'$ is the intersection graph of subtrees in a tree and hence, by \cref{chordal-clique-tree}, $G'$ is a chordal graph.
Observe that $\mathcal T$ is also a tree decomposition of $G'$.
By \cref{chordal-clique-tree}, $G'$ has a simplicial vertex $v$, and hence by \cref{clique in bag}, there must exist a node $t \in V(T)$ such that $N_{G'}[v] \subseteq X_t$.
Therefore, $N_G[v] \subseteq X_t$, which concludes the proof.
\end{proof}

Let us remark that \cref{minimum-degree} can also be derived from the proofs of Lemmas~2 and 4 in~\cite{MR2829452} (which do not rely on connections with chordal graphs).

An (integer) \emph{graph invariant} is a mapping from the class of all graphs to the set of nonnegative integers that does not distinguish between isomorphic graphs.
For a graph invariant $\rho$, we say that a graph class $\mathcal{G}$ has \emph{bounded $\rho$} if there exists an integer $k$ such that $\rho(G)\le k$ for every graph $G$ in the class.
A graph class $\mathcal{G}$ is said to be \emph{$(\tw,\omega)$-bounded} if it admits a \emph{$(\tw,\omega)$-binding function}, that is, a function $f$ such that for every graph $G$ in the class and any induced subgraph $G'$ of $G$, the treewidth of $G'$ is at most $f(\omega(G'))$.
Ramsey's theorem~\cite{MR1576401} states that for every two positive integers $p$ and $q$ there exists an integer $N(p,q)$ such that every graph with at least $N(p,q)$ vertices contains either a clique of size $p$ or an independent set of size $q$. The least such positive integer is denoted by $R(p,q)$.

\section{Definition and basic properties}\label{sec:basic}

We now define the two main concepts studied in this paper.

\begin{definition}
Consider a graph $G$ and a tree decomposition $\mathcal{T} = (T, \{X_t\}_{t\in V(T)})$ of $G$.
The \emph{independence number} of $\mathcal T$, denoted by $\alpha(\mathcal{T})$,
is defined as follows:
\[\alpha(\mathcal{T}) = \max_{t\in V(T)} \alpha(G[X_t])\,.\]
The \emph{tree-independence number} of $G$, denoted by $\tin(G)$, is the minimum independence number among all possible tree decompositions of $G$.
\end{definition}

In this section we derive several basic properties of the tree-independence number of graphs.
First, we observe that Ramsey's theorem implies that graph classes with bounded tree-independence number are $(\tw,\omega)$-bounded with a polynomial binding function.

\begin{lemma}\label{bounded tin implies bounded tw-omega}
For every positive integer $k$, the class of graphs with tree-independence number at most $k$ is $(\tw,\omega)$-bounded, with a binding function $f(p) = R(p+1,k+1)-2$, which is upper-bounded by a polynomial in $p$ of degree $k$.
\end{lemma}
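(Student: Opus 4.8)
The plan is to show that any graph $G$ with $\tin(G) \le k$ satisfies $\tw(G) \le R(\omega(G)+1, k+1) - 2$, and then to bound the Ramsey number appropriately. First I would fix a tree decomposition $\mathcal{T} = (T, \{X_t\}_{t \in V(T)})$ of $G$ with $\alpha(\mathcal{T}) \le k$, which exists by definition of the tree-independence number. The key observation is that each bag $X_t$ induces a subgraph $G[X_t]$ with $\alpha(G[X_t]) \le k$ and clique number at most $\omega(G)$; hence by Ramsey's theorem $|X_t| < R(\omega(G)+1, k+1)$, i.e. $|X_t| \le R(\omega(G)+1,k+1) - 1$. Therefore $\mathcal{T}$ has width at most $R(\omega(G)+1,k+1) - 2$, and since $\tw(G)$ is the minimum width over all tree decompositions, $\tw(G) \le R(\omega(G)+1,k+1) - 2 = f(\omega(G))$. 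The same argument applies verbatim to any induced subgraph $G'$ of $G$: the restriction of $\mathcal{T}$ to $G'$ (keeping only the vertices of $G'$ in each bag) is a tree decomposition of $G'$ whose independence number is still at most $k$, so $\tw(G') \le f(\omega(G'))$. This establishes that $f$ is a $(\tw,\omega)$-binding function for the class.

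It remains to check the degree bound on $f$. I would use the standard upper bound on Ramsey numbers $R(p,q) \le \binom{p+q-2}{p-1}$ (Erdős–Szekeres). With $p = \omega(G)+1$ (which I will treat as the variable, writing $f(p') = R(p'+1, k+1)-2$ following the statement, so the free variable is $p'$) this gives $R(p'+1, k+1) \le \binom{p'+k}{k}$, which is a polynomial in $p'$ of degree $k$ with leading term $\tfrac{1}{k!}(p')^k$. Subtracting $2$ does not change the degree, so $f(p') = \mathcal{O}((p')^k)$ as claimed.

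I do not expect a genuine obstacle here; the statement is essentially an unwinding of the definitions together with Ramsey's theorem. The only point requiring a little care is the claim that deleting vertices from the bags of a tree decomposition of $G$ yields a valid tree decomposition of an induced subgraph $G'$ with independence number not exceeding that of the original — this is immediate since the three tree-decomposition axioms are preserved under restricting every bag to $V(G') \subseteq V(G)$, and $G'[X_t \cap V(G')]$ is an induced subgraph of $G[X_t]$, hence has independence number at most $\alpha(G[X_t]) \le k$. A secondary minor point is to make sure the inequality in Ramsey's theorem is applied with the correct strictness: a graph on $N(\omega(G)+1, k+1)$ vertices must contain a clique of size $\omega(G)+1$ (impossible in $G[X_t]$) or an independent set of size $k+1$ (impossible since $\alpha(\mathcal{T}) \le k$), so $|X_t| \le R(\omega(G)+1,k+1)-1$, giving width at most $R(\omega(G)+1,k+1)-2$ exactly as needed.
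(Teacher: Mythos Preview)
Your proposal is correct and follows essentially the same argument as the paper: pick a tree decomposition with independence number at most $k$, apply Ramsey's theorem to each bag to bound its size by $R(\omega(G)+1,k+1)-1$, and use the Erd\H{o}s--Szekeres bound $R(p+1,k+1)\le\binom{p+k}{k}$ for the degree estimate. Your explicit treatment of induced subgraphs is a small addition the paper omits (it relies implicitly on the class being closed under induced subgraphs), but otherwise the proofs are the same.
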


\begin{proof}
    The standard proof of Ramsey's theorem is based on the inequality $R(p,q) \le  R(p-1, q) + R(p, q-1)$ for all $p,q\ge 2$, which implies that $R(p,q)\le \binom{p+q-2}{q-1}$ for all positive integers $p$ and $q$.
    For fixed $q$, this is a polynomial in $p$ of degree $q-1$.

    Let us now fix $p \in \mathbb{Z}_+$ and let $G$ be a graph such that $\omega(G) = p$ and $\tin(G) \leq k$.
    Fix a tree decomposition $\mathcal T$ of $G$ with independence number at most $k$.
    Note that every bag of $\mathcal T$ induces a subgraph of $G$ with independence number at most $k$ and clique number at most $p$.
    Thus, for every bag $X$ of $\mathcal T$, Ramsey's theorem implies that $|X| \leq R(p+1,k+1)-1$.
    It follows that $\tw(G) \leq R(p+1,k+1)-2$, as claimed.
\end{proof}

Next, we show that the tree-independence number can be used to characterize chordal graphs.

\begin{theorem}\label{chordal}
Let $G$ be a graph. Then $\tin(G)\le 1$ if and only if $G$ is chordal.
\end{theorem}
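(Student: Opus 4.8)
The plan is to prove both implications using the catalogue of characterizations of chordal graphs in \cref{chordal-clique-tree}, together with the elementary observation that a tree decomposition has independence number at most $1$ precisely when every one of its bags is a clique. Indeed, $\alpha(G[X_t]) \le 1$ means $G[X_t]$ has no two non-adjacent vertices, i.e.\ $X_t$ is a clique of $G$; so $\tin(G) \le 1$ is equivalent to the existence of a tree decomposition of $G$ in which every bag is a clique.

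For the ``if'' direction I would argue as follows. Suppose $G$ is chordal. By \cref{chordal-clique-tree} it admits a clique tree $\mathcal{T}$, that is, a tree decomposition whose bags are exactly the maximal cliques of $G$. Each bag $X_t$ is then a clique of $G$, so $\alpha(G[X_t]) \le 1$, hence $\alpha(\mathcal{T}) \le 1$ and therefore $\tin(G) \le 1$. (If $G$ is null the statement is vacuous.)

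For the ``only if'' direction I would reuse the construction from the proof of \cref{minimum-degree}. Assume $\tin(G) \le 1$ and fix a tree decomposition $\mathcal{T} = (T, \{X_t\}_{t \in V(T)})$ with $\alpha(\mathcal{T}) \le 1$, so that every bag is a clique of $G$. Let $G'$ be the graph on vertex set $V(G)$ in which two distinct vertices are adjacent if and only if some bag of $\mathcal{T}$ contains both of them. Assigning to each vertex $v$ the subtree $T_v$ of $T$ induced by $\{t \in V(T) : v \in X_t\}$ exhibits $G'$ as the intersection graph of a family of subtrees of a tree, so $G'$ is chordal by \cref{chordal-clique-tree}. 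It then remains to show $G' = G$: the inclusion $E(G) \subseteq E(G')$ is immediate since every edge of $G$ is contained in some bag, and the reverse inclusion $E(G') \subseteq E(G)$ holds exactly because every bag of $\mathcal{T}$ is a clique of $G$, so any two vertices sharing a bag are adjacent in $G$. Hence $G = G'$ is chordal.

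I do not anticipate a real obstacle here; the proof is a direct transfer of known equivalences. The only points requiring care are making the translation ``$\alpha(\mathcal{T}) \le 1 \iff$ every bag is a clique'' explicit (this is the hinge connecting the independence-number definition to the subtree-representation characterization of chordality), and briefly dispatching the degenerate cases where $G$ is null or edgeless.
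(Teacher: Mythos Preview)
Your proposal is correct and follows essentially the same approach as the paper: both directions invoke \cref{chordal-clique-tree}, using the clique-tree characterization for the ``if'' direction and the subtree-intersection characterization for the ``only if'' direction after observing that $\alpha(\mathcal{T})\le 1$ means every bag is a clique. Your detour through the auxiliary graph $G'$ (borrowed from the proof of \cref{minimum-degree}) and the paper's direct argument that adjacency in $G$ coincides with sharing a bag are the same step phrased two ways.
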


\begin{proof}
If $G$ is chordal, then, by \cref{chordal-clique-tree}, $G$ has a clique tree and thus $\tin(G)\le 1$.
Conversely, if $G$ has a tree decomposition $\mathcal T$ with independence number at most one, then every bag of $\mathcal T$ is a clique in $G$.
Since $\mathcal T$ is a tree decomposition, for every vertex $u$ of $G$ the subgraph $T_u$ of $T$ induced by the set $\{t\in V(T): u\in X_t\}$ is a tree.
Furthermore, since each bag of $\mathcal T$ is a clique, two distinct vertices $u$ and $v$ of $G$ are adjacent if and only if they belong to a same bag, which is in turn equivalent to the condition that $T_u$ and $T_v$ have a vertex in common.
Thus, $G$ is the intersection graph of the collection of subtrees $\{T_u : u\in V(G)\}$.
Applying \cref{chordal-clique-tree}, we conclude that $G$ is chordal.
\end{proof}

A tree decomposition of a graph $G$ is said to be \emph{trivial} if it has a single bag $X_t = V(G)$.
Next, we establish the \NP-hardness of the problem of computing the tree-independence number of a graph.
Our reduction is based on the following lemma, which we derive from \cref{minimum-degree}.

\begin{lemma}\label{lem:alpha}
Let $G$ be a graph and let $G'$ be the graph obtained from two disjoint copies of $G$ by adding all possible edges between them. Then $\tin(G') = \alpha(G)$.
\end{lemma}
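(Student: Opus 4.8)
The plan is to prove the two inequalities $\tin(G') \le \alpha(G)$ and $\tin(G') \ge \alpha(G)$ separately. Write $G_1$ and $G_2$ for the two disjoint copies of $G$, so that $V(G') = V(G_1)\cup V(G_2)$ and every vertex of $G_1$ is adjacent to every vertex of $G_2$ in $G'$. Note first that $\alpha(G') = \alpha(G)$: any independent set of $G'$ must be entirely contained in $V(G_1)$ or entirely in $V(G_2)$, since there are no non-edges between the two copies.

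For the upper bound $\tin(G') \le \alpha(G)$, I would simply take the trivial tree decomposition of $G'$, namely the one-node tree with the single bag $X_t = V(G')$. Its independence number is $\alpha(G'[V(G')]) = \alpha(G') = \alpha(G)$ by the observation above, so $\tin(G') \le \alpha(G)$.

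For the lower bound $\tin(G') \ge \alpha(G)$, let $\mathcal{T} = (T, \{X_t\}_{t\in V(T)})$ be an arbitrary tree decomposition of $G'$; I want to show some bag has independence number at least $\alpha(G)$. The key tool is \cref{minimum-degree}: there is a vertex $v \in V(G')$ and a node $t\in V(T)$ with $N_{G'}[v]\subseteq X_t$. Without loss of generality $v\in V(G_1)$. Since $v$ is adjacent in $G'$ to every vertex of $G_2$, we have $V(G_2)\subseteq N_{G'}[v]\subseteq X_t$, hence $G_2 = G'[V(G_2)]$ is an induced subgraph of $G'[X_t]$. Therefore $\alpha(G'[X_t]) \ge \alpha(G_2) = \alpha(G)$, and so $\alpha(\mathcal{T}) \ge \alpha(G)$. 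As $\mathcal{T}$ was arbitrary, $\tin(G') \ge \alpha(G)$, completing the proof.

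The only nontrivial point—and the one I would single out as requiring care—is the application of \cref{minimum-degree} together with the observation that a vertex's closed neighborhood in $G'$ contains an entire copy of $G$; the rest is bookkeeping about independent sets not crossing between the two copies. Everything else is routine.
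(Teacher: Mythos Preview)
Your proof is correct and follows essentially the same approach as the paper's: both use the trivial tree decomposition for the upper bound and invoke \cref{minimum-degree} for the lower bound, observing that the closed neighborhood of any vertex in one copy contains the entire other copy.
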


\begin{proof}
Let us denote by $G_1$ and $G_2$ the two disjoint copies of $G$ such that $V(G') = V(G_1)\cup V(G_2)$.
Observe that every independent set in $G'$ is entirely contained in either $G_1$ or $G_2$ and hence, we have $\alpha(G') = \alpha(G)$.
Thus, the trivial tree decomposition of $G'$ has independence number equal to $\alpha(G)$ and consequently $\tin(G')\le \alpha(G)$.

For the converse direction, consider an arbitrary tree decomposition $\mathcal{T} = (T, \{X_t\}_{t\in V(T)})$ of $G'$.
By \cref{minimum-degree}, there exists a vertex $v\in V(G)$ and a node $t\in V(T)$ such that $N[v]\subseteq X_t$.
Assume without loss of generality that $v\in V(G_1)$.
Then $V(G_2)\subseteq N(v)\subseteq X_t$, and therefore $\alpha(G'[X_t])\ge \alpha(G_2) = \alpha(G)$.
Thus, every tree decomposition of $G'$ contains a bag inducing a subgraph with independence number at least $\alpha(G)$.
This shows that $\tin(G')\ge \alpha(G)$.
Therefore, equality must hold.
\end{proof}

Since computing the independence number of a graph is \NP-hard~\cite{MR0378476}, \cref{lem:alpha} implies the following.

\begin{theorem}\label{computing-tree-independence-number-is-hard}
Computing the tree-independence number of a given graph is \NP-hard.
\end{theorem}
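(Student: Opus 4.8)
The plan is to derive \cref{computing-tree-independence-number-is-hard} as an immediate corollary of \cref{lem:alpha}, which already does all the work. First I would recall that the \textsc{Independence Number} problem—given a graph $G$ and an integer $\ell$, decide whether $\alpha(G)\ge \ell$—is \NP-hard~\cite{MR0378476}, and hence so is the problem of computing $\alpha(G)$ in the functional sense. I would then describe a polynomial-time reduction: given an instance $G$ of \textsc{Independence Number}, construct in polynomial time the graph $G'$ obtained from two disjoint copies of $G$ by adding all edges between the two copies (this has $2|V(G)|$ vertices and $2|E(G)|+|V(G)|^2$ edges, so it is computable in polynomial time). By \cref{lem:alpha}, $\tin(G') = \alpha(G)$, so any algorithm that computes $\tin(G')$ also computes $\alpha(G)$; thus computing the tree-independence number is at least as hard as computing the independence number.

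Concretely, I would phrase the argument as follows. Suppose for contradiction that there is a polynomial-time algorithm $\mathcal{A}$ that, on input a graph $H$, outputs $\tin(H)$. Then on input $G$ we run $\mathcal{A}$ on $G'$ (constructible from $G$ in polynomial time) and return the result; by \cref{lem:alpha} this equals $\alpha(G)$. Since $\alpha$ is \NP-hard to compute, so is $\tin$, establishing the theorem. If one prefers a decision-version formulation, the same reduction works verbatim: $(G,\ell)$ is a yes-instance of \textsc{Independence Number} if and only if $(G',\ell)$ is a yes-instance of the decision problem ``$\tin(G')\le \ell$''—wait, one must be careful with the direction of the inequality, so I would instead note that $\alpha(G)\ge \ell$ iff $\tin(G')\ge \ell$, and since the complement of \textsc{Independence Number} is co-\NP-hard this still yields \NP-hardness of the natural decision problem; cleanest is simply to state \NP-hardness of the computation problem via the many-one reduction above.

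There is essentially no obstacle here—\cref{lem:alpha} is the crux and it has already been proved (using \cref{minimum-degree} to force a bag to contain an entire copy of $V(G_2)$, hence an independent set of size $\alpha(G)$, while the trivial decomposition gives the matching upper bound). The only thing to be mildly careful about is that the reduction must be stated as a reduction from a problem known to be \NP-hard to \emph{compute} (or \NP-complete to decide), and that the construction of $G'$ is genuinely polynomial in the size of $G$; both are routine. So my proof would be two or three sentences: cite the \NP-hardness of independence number, invoke \cref{lem:alpha}, and conclude.
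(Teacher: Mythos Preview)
Your proposal is correct and follows exactly the paper's approach: the paper's entire proof is the single sentence that the \NP-hardness of computing the independence number~\cite{MR0378476} together with \cref{lem:alpha} yields the result. Your extra discussion of decision-version formulations is unnecessary (and slightly muddled), but your final conclusion---state \NP-hardness of the computation problem via the reduction $G\mapsto G'$ and \cref{lem:alpha}---is precisely what the paper does.
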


For later use we also record the following consequence of \cref{lem:alpha}.

\begin{corollary}\label{tin-of-Knn}
For every positive integer $n$, we have $\tin(K_{n,n})  = n$.
\end{corollary}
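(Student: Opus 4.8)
The plan is to apply \cref{lem:alpha} directly to the graph $G = \overline{K_n}$, the edgeless graph on $n$ vertices. First I would observe that $K_{n,n}$ is precisely the graph obtained from two disjoint copies of $\overline{K_n}$ by adding all possible edges between them: each copy contributes an independent set of size $n$, and the complete bipartite join between the two copies supplies exactly the edge set of $K_{n,n}$. Hence $K_{n,n}$ matches the construction in the statement of \cref{lem:alpha} with $G = \overline{K_n}$.

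Then the conclusion is immediate: \cref{lem:alpha} gives $\tin(K_{n,n}) = \alpha(\overline{K_n})$, and since an edgeless graph on $n$ vertices has independence number $n$, we get $\tin(K_{n,n}) = n$.

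There is essentially no obstacle here; the only thing to be careful about is the bookkeeping that the ``two disjoint copies of $G$ with all edges between them'' construction applied to $G = \overline{K_n}$ really produces $K_{n,n}$ and not some other graph (in particular, that within each side there are no edges, which is why we take $G$ edgeless rather than, say, $G = K_n$). Once that identification is made, the corollary follows in one line.

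\begin{proof}
Let $G$ be the edgeless graph on $n$ vertices. The graph obtained from two disjoint copies $G_1,G_2$ of $G$ by adding all edges between $V(G_1)$ and $V(G_2)$ is exactly $K_{n,n}$, with parts $V(G_1)$ and $V(G_2)$. By \cref{lem:alpha}, $\tin(K_{n,n}) = \alpha(G) = n$.
\end{proof}
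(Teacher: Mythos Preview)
Your proof is correct and follows exactly the approach intended by the paper, which records the corollary as a direct consequence of \cref{lem:alpha} applied to the edgeless graph on $n$ vertices.
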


Given any graph $G$, the trivial tree decomposition of $G$ has independence number equal to $\alpha(G)$.
This immediately implies the following.

\begin{observation}\label{tin is at most alpha}
For every graph $G$ we have $\tin(G)\le \alpha(G)$.
\end{observation}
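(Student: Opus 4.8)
The plan is to exhibit a single, explicit tree decomposition of $G$ whose independence number equals $\alpha(G)$; since $\tin(G)$ is by definition the minimum of $\alpha(\mathcal{T})$ over \emph{all} tree decompositions $\mathcal{T}$ of $G$, producing one witness with $\alpha(\mathcal{T}) = \alpha(G)$ immediately gives the desired inequality. The natural candidate is the trivial tree decomposition, already named in the paragraph preceding the statement.

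First I would take $\mathcal{T} = (T, \{X_t\}_{t\in V(T)})$ where $T$ is the one-node tree, say $V(T) = \{t\}$, and $X_t = V(G)$. I would quickly verify that this is indeed a tree decomposition of $G$: every vertex of $G$ lies in the unique bag; for every edge $uv \in E(G)$ the unique bag contains both $u$ and $v$; and for every vertex $u \in V(G)$ the set $\{t' \in V(T) : u \in X_{t'}\}$ equals $V(T)$, so the induced subgraph $T_u$ is all of $T$, which is (trivially) connected. Then I would compute, directly from the definition of the independence number of a tree decomposition, that $\alpha(\mathcal{T}) = \max_{t' \in V(T)} \alpha(G[X_{t'}]) = \alpha(G[V(G)]) = \alpha(G)$. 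Since $\mathcal{T}$ is a tree decomposition of $G$, it follows that $\tin(G) \le \alpha(\mathcal{T}) = \alpha(G)$.

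There is no real obstacle here: the statement is an immediate consequence of the definitions, and the only point requiring (a one-line) check is that the trivial tree decomposition genuinely satisfies all three defining conditions of a tree decomposition, which it does. I would also remark that the bound is sharp — for instance, $\tin(K_{n,n}) = n = \alpha(K_{n,n})$ by \cref{tin-of-Knn} — so no improvement over $\alpha(G)$ is possible in general.
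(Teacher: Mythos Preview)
Your proof is correct and follows exactly the same approach as the paper: the paper simply observes that the trivial tree decomposition has independence number equal to $\alpha(G)$, and you spell out this one-line argument in full detail.
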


As shown by \cref{tin-of-Knn}, this bound is sharp.
Another easily obtained upper bound on the tree-independence number of a graph is related to the treewidth of the graph.
This bound is sharp as well.

\begin{theorem}\label{tree-independence number bounded by treewidth}
For every graph $G$, $\tin(G)\le\tw(G)+1$, and this bound is sharp: for every integer $k\neq 2$, there exists a graph $G$ such that $\tin(G) = k$ and $\tw(G) = k-1$.
\end{theorem}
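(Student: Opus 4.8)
The plan is to establish the two parts of the statement separately: first the inequality $\tin(G) \le \tw(G) + 1$, then sharpness via explicit constructions for each $k \neq 2$.

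For the inequality, I would take any tree decomposition $\mathcal{T} = (T, \{X_t\}_{t\in V(T)})$ of $G$ of width $\tw(G)$, so that $|X_t| \le \tw(G)+1$ for every node $t$. Then for each bag, $\alpha(G[X_t]) \le |X_t| \le \tw(G)+1$ trivially, since an independent set is in particular a subset of the bag. Taking the maximum over all bags gives $\alpha(\mathcal{T}) \le \tw(G)+1$, and hence $\tin(G) \le \alpha(\mathcal{T}) \le \tw(G)+1$. This part is essentially immediate.

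For sharpness, I need, for each integer $k \neq 2$, a graph $G$ with $\tin(G) = k$ and $\tw(G) = k-1$. The cases $k \le 1$ should be handled first: for $k = 0$ take $G$ to be the null graph (or a single vertex), where $\tin = 0 = \tw + 1$ forces $\tw = -1$; one may instead take $k=1$ with $G = K_1$ or any tree on at least two vertices, which is chordal, so $\tin(G) = 1$ by \cref{chordal}, while $\tw(G) = 0$. Alternatively, both small cases follow from noting a chordal graph of treewidth exactly $k-1$ realizes $\tin = 1$ only when $k-1 = 0$; so the main content is $k \ge 3$. For $k \ge 3$, the natural candidate is $G = K_{k-1,k-1}$ augmented, or more simply I would look at the complete bipartite graph $K_{k-1, k-1}$: by \cref{tin-of-Knn} this has tree-independence number $k-1$, which is the wrong value, so instead I would use a graph built to force a large independent set in every bag while keeping the treewidth low. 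A clean choice is to take $G$ to be two disjoint copies of a graph $H$ with all edges added between them, so that by \cref{lem:alpha} we get $\tin(G) = \alpha(H)$; choosing $H$ with $\alpha(H) = k$ and controlling $\tw(G)$ via the structure of $H$ (note $\tw(G) = \tw(H) + |V(H)|$ roughly, since $G$ contains a large clique side). Concretely, take $H$ to be the disjoint union of $k$ edges (a perfect matching on $2k$ vertices): then $\alpha(H) = k$, so $\tin(G) = k$; and $G$ is the complement of (two disjoint matchings plus... ) — more usefully, $G$ here has the join structure, $\tw(G) = |V(H)| + \tw(H) - $ small corrections. I would instead engineer $H$ so that $\tw(G)$ comes out to exactly $k-1$; since $G$ contains $K_{2k}$ minus a matching as a subgraph when $H$ is large, the treewidth is at least $2k - 2$, which overshoots. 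So this route gives $\tin$ small relative to $\tw$, the opposite of what is needed.

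Reconsidering: since $\tin(G) = \tw(G)+1$ means the trivial upper bound is tight, and $\tin(G) \le \alpha(G)$ always, I want a graph that is ``spread out'' (low treewidth, hence tree-like) yet every tree decomposition of near-optimal width still has a bag with a size-$k$ independent set. A good family is the following: let $G$ be obtained from a long path (or a tree) by blowing up, or take $G = \overline{K_k} + $ (a path) — but the cleanest is to use the graph $G_k$ which is the ``$k$-th power''-type construction, or simply the complement of a suitable sparse graph. I would actually propose: for $k \ge 3$, let $G$ be the graph consisting of a triangle-free graph of treewidth $k-1$ having the property that some minimal separator in it (which must appear as a subset of a bag in every tree decomposition, by \cref{tw-via-triangulations} and standard facts) induces a set with independence number $k$; for instance a suitable subdivision or a grid-like graph. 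The main obstacle here is finding the explicit family and verifying both $\tin(G) = k$ (lower bound needs an argument that \emph{every} tree decomposition has a bag with a large independent set — likely via \cref{minimum-degree} or a minimal-separator argument) and $\tw(G) = k-1$ (needs a matching tree decomposition of width $k-1$ together with a lower bound, e.g. a $(k-1)$-clique minor or a bramble). I expect the lower bound $\tin(G) \ge k$ to be the delicate step, since it is a statement quantified over all tree decompositions, and I would attack it by the same style of argument as in \cref{lem:alpha}: identify a vertex $v$ whose closed neighborhood must lie in a single bag and show that neighborhood already contains a $k$-element independent set.
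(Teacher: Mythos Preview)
Your argument for the inequality $\tin(G)\le \tw(G)+1$ is correct and identical to the paper's. The case $k=1$ is also fine.

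For $k\ge 3$, however, there is a genuine gap: you never settle on a construction, and the proof strategy you finally propose cannot succeed. You suggest proving the lower bound $\tin(G)\ge k$ by invoking \cref{minimum-degree} to find a vertex $v$ whose closed neighborhood lies in some bag and then exhibiting a $k$-element independent set inside $N[v]$. But observe that any independent set $I\subseteq N[v]$ of size $k$ must avoid $v$ (since $v$ is adjacent to every other vertex of $N[v]$), hence $I\subseteq N(v)$ and $d(v)\ge k$. Since \cref{minimum-degree} gives no control over \emph{which} vertex $v$ appears, to make the argument work for \emph{every} tree decomposition you would need $\alpha(G[N[v]])\ge k$ for \emph{all} $v$, forcing $\delta(G)\ge k$ and hence $\tw(G)\ge \delta(G)\ge k>k-1$. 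So no graph with $\tw(G)=k-1$ can have this property, and the approach is a dead end.

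The paper instead builds $G$ by starting from $K_k$ on vertex set $S$ and replacing each edge by $k$ internally disjoint paths of length two; the middle vertices make $S$ independent in $G$. The upper bound $\tw(G)\le k-1$ comes from completing $S$ back to a clique (a chordal supergraph with $\omega=k$). For the lower bound $\tin(G)\ge k$, the key tool is not \cref{minimum-degree} but \cref{conformality-of-bags}: in any tree decomposition, either some bag contains all of $S$ (an independent set of size $k$), or some pair $i,j\in S$ is separated, and then the $k$ common neighbours of $i$ and $j$---an independent set---must all lie in every bag on the path in $T$ between the subtrees $T_i$ and $T_j$. This dichotomy is the missing idea in your proposal.
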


\begin{proof}
It follows directly from the definitions that the  independence number of any tree decomposition is at most its width plus one. Thus, taking $\mathcal{T}$ to be a tree decomposition of $G$ with minimum possible width, we obtain
$\tin(G)\le\alpha(\mathcal{T}) \le {\it width}(\mathcal{T})+1 = \tw(G)+1$.

For $k = 1$, the graph $K_1$ satisfies $\tin(K_1) = k$ and $\tw(K_1) = k-1$.

Fix $k\ge 3$, let $S = \{1,\ldots, k\}$, and let $G$ be the graph obtained from a complete graph with vertex set $S$ by replacing each of its edges $ij$ with $k$ paths of length two connecting $i$ and $j$. We claim that $\tw(G) = k-1$ and $\tin(G) = k$. Since $\tin(G)\le\tw(G)+1$, it suffices to show that $\tw(G) \le k-1$ and that $\tin(G) \ge k$.

The inequality $\tw(G) \le k-1$ follows from \cref{tw-via-triangulations} and the observation that the graph $G'$ obtained from $G$ by adding to it all edges between vertices in the set $S$ is a chordal graph with clique number $k$.

It remains to show that $\tin(G) \ge k$.
Consider an arbitrary tree decomposition $\mathcal{T} = (T,\{X_t\}_{t\in V(T)})$ of $G$.
We claim that there exists a bag of $\mathcal{T}$ having independence number at least $k$.
This is clearly the case if there exists a bag $X_t$ such that $S\subseteq X_t$.
Thus, we may assume that no bag of $\mathcal{T}$ contains $S$.
By \cref{conformality-of-bags}, there exist two distinct vertices $i,j\in S$ that are not contained in the same bag of $\mathcal{T}$.
Since $i$ and $j$ are not contained in a same bag of $\mathcal T$, the subtrees $T_i$ and $T_j$ are disjoint.
Note that, by construction of $G$, the vertices $i$ and $j$ have $k$ common neighbors.
Let $u \in N(i) \cap N(j)$ be such a vertex.
Then $u$ belongs to a bag in $\mathcal T$ containing $i$, and similarly for $j$.
In other words, $T_u$ intersects $T_i$ and $T_j$, which implies that $T_i \cup T_u \cup T_j$ is connected.
Let $P$ be the path in $T$ connecting $T_i$ and $T_j$.
Clearly, $P$ is a subgraph of $T_u$, and thus for any node $t \in P$, the bag $X_t$ of $\mathcal T$ contains $u$.
Since this holds for every common neighbor of $i$ and $j$, every such bag contains $N(i) \cap N(j)$, which is an independent set of size $k$.
Thus, as $\mathcal T$ can be any tree decomposition of $G$, we conclude that $\tin(G) \geq k$.
\end{proof}

Note that there are no graphs $G$ with $\tin(G) = \tw(G)+1 = 2$.
Any such graph would satisfy $\tw(G) = 1$ and would therefore be acyclic, and hence chordal.
\Cref{chordal} would then imply that $\tin(G)\le 1$, a contradiction.

\medskip
The treewidth is well-known to be monotone under minors, in the sense that $\tw(G)\le \tw(G')$ for any graph $G$ and any minor $G'$ of $G$.
This is not the case for the tree-independence number, as it can increase under edge deletions.
This fact can be easily observed, e.g., using the families of complete graphs and complete bipartite graphs (cf. \cref{chordal,tin-of-Knn}).
On the other hand, the tree-independence number cannot be increased by vertex deletions and edge contractions.

\begin{proposition}\label{lem:tree-indepencence number induced-minor}
Let $G$ be a graph and $G'$ an induced minor of $G$.
Then $\tin(G')\le \tin(G)$.
\end{proposition}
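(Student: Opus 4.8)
The plan is to reduce the statement to the two elementary operations whose iteration produces an induced minor: vertex deletion and edge contraction. That is, I would first note that it suffices to prove $\tin(G-v)\le\tin(G)$ for every $v\in V(G)$ and $\tin(G/e)\le\tin(G)$ for every $e\in E(G)$, since by definition an induced minor of $G$ is obtained by a finite sequence of such operations, after which a trivial induction on the length of the sequence finishes the argument.

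For vertex deletion, I would take a tree decomposition $\mathcal{T}=(T,\{X_t\}_{t\in V(T)})$ of $G$ with $\alpha(\mathcal{T})=\tin(G)$ and simply delete $v$ from every bag. The result is readily checked to be a tree decomposition of $G-v$ (each remaining vertex still occurs in some bag, each edge of $G-v$ is still covered, and the subtrees $T_x$ for $x\neq v$ are unchanged). Since $(G-v)[X_t\setminus\{v\}]$ is an induced subgraph of $G[X_t]$, we get $\alpha((G-v)[X_t\setminus\{v\}])\le\alpha(G[X_t])$ for every $t$, so this new decomposition witnesses $\tin(G-v)\le\alpha(\mathcal{T})=\tin(G)$.

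The edge contraction case is the core of the proof. Write $e=uv$ and let $w$ be the vertex of $G/e$ obtained by identifying $u$ and $v$. Starting from a tree decomposition $\mathcal{T}=(T,\{X_t\})$ of $G$ with $\alpha(\mathcal{T})=\tin(G)$, I would set $X_t'=(X_t\setminus\{u,v\})\cup\{w\}$ whenever $X_t\cap\{u,v\}\neq\emptyset$, and $X_t'=X_t$ otherwise. Verifying that $\mathcal{T}'=(T,\{X_t'\})$ is a tree decomposition of $G/e$ is routine; the only place the hypothesis $uv\in E(G)$ is used is the connectivity requirement for $w$, whose associated node set is $V(T_u)\cup V(T_v)$, and $T_u$, $T_v$ share a node because some bag of $\mathcal{T}$ contains both $u$ and $v$, so their union is connected. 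The key point is the inequality $\alpha((G/e)[X_t'])\le\alpha(G[X_t])$ for every $t$. To prove it, take a maximum independent set $I'$ of $(G/e)[X_t']$. If $w\notin I'$, then $I'$ consists of vertices distinct from $u,v$ and is independent in $G$ as well, hence in $G[X_t]$. If $w\in I'$, write $I'=\{w\}\cup J$; no vertex of $J$ is adjacent in $G$ to $u$ or to $v$ (otherwise it would be adjacent to $w$ in $G/e$), and since $X_t\cap\{u,v\}\neq\emptyset$ we may pick $z\in\{u,v\}\cap X_t$, so that $\{z\}\cup J$ is an independent set of $G[X_t]$ of size $|I'|$. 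In either case $\alpha(G[X_t])\ge|I'|$, as desired.

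Combining the two bags of the argument, $\alpha(\mathcal{T}')\le\alpha(\mathcal{T})=\tin(G)$, so $\tin(G/e)\le\tin(G)$, and the reduction at the outset yields $\tin(G')\le\tin(G)$ for every induced minor $G'$ of $G$. The step I expect to require the most care is the independence-number inequality in the contraction case — in particular guaranteeing that, whenever a maximum independent set of the contracted bag uses the merged vertex $w$, one can always substitute an ``uncontracted'' vertex $z\in\{u,v\}$ lying in the original bag; this is precisely what the definition of $X_t'$ (placing $w$ only in bags that already met $\{u,v\}$) is designed to secure.
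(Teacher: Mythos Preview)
Your proposal is correct and follows essentially the same approach as the paper: reduce to single vertex deletions and edge contractions, modify an optimal tree decomposition of $G$ in the obvious way, and verify the independence-number inequality bag by bag via the same case analysis on whether the contracted vertex $w$ lies in a maximum independent set of the bag. Your treatment is in fact slightly more explicit than the paper's in two places---you spell out why $T_u\cup T_v$ is connected, and you explicitly pick $z\in\{u,v\}\cap X_t$ rather than asserting that one of $(I'\setminus\{w\})\cup\{u\}$ or $(I'\setminus\{w\})\cup\{v\}$ works---but the argument is the same.
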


\begin{proof}
Let $\mathcal{T} = (T,\{X_{t}\}_{t \in V(T)})$ be an arbitrary tree decomposition of $G$.
First, we show that the deletion of a vertex does not increase the tree-independence number.
Let $v$ be a vertex of $G$.
Let $\mathcal{T}'$ be the tree decomposition obtained from $\mathcal{T}$ by removing $v$ from all of the bags that contain it.
Observe that $\mathcal T'$ is a tree decomposition of $G-v$.
Clearly, $\alpha(\mathcal{T}')\le \alpha(\mathcal{T})$, and hence $\tin(G-v)\le \tin(G)$.

Now, we show that the contraction of an edge does not increase the tree-independence number.
Let $e = uv$ be an edge of $G$ and $G/e$ denote the graph obtained from $G$ after contracting the edge $e$.
We denote by $w$ the vertex of $G/e$ that corresponds to the contracted edge.
We construct a tree decomposition $\mathcal{T}' = (T,\{X'_t\}_{t \in V(T)})$ (where $T$ is the same tree as in $\mathcal T$) of $G/e$ as follows.
For each node $t$ of $\mathcal T'$, we have two cases: if $X_{t}$ contains neither $u$ nor $v$, then we set $X'_t = X_t$; otherwise,
we set $X'_t = (X_t \setminus \{u,v\}) \cup \{w\}$.
We claim that $\mathcal T'$ is a tree decomposition of $G/e$ such that $\alpha(\mathcal T') \leq \alpha(\mathcal T)$.
First, observe that $\mathcal T'$ is a tree decomposition of $G/e$, as it satisfies all the defining conditions of a tree decomposition.
To verify that $\alpha(\mathcal T') \leq \alpha(\mathcal T)$, fix a bag $X'_t$ of $\mathcal T'$ and let $I\subseteq X'_t$ be an independent set in $G/e$.
If $w \notin I$, then $I$ also corresponds to an independent set in $G[X_t]$, and hence $|I| \leq \alpha(G[X_t])$.
On the other hand, if $w \in I$, then either $(I\setminus \{w\}) \cup \{u\})$ or $(I\setminus \{w\}) \cup \{v\})$ is an independent set in $G[X_t]$.
In particular, we again get that $|I| \leq \alpha(G[X_t])$.
Thus, we have in both cases that $|I| \leq \alpha(\mathcal{T})$.
It follows that $\alpha(\mathcal{T}')\le \alpha(\mathcal{T})$, and hence $\tin(G/e)\le \tin(G)$.

If $G'$ is an induced minor of $G$, then $G'$ can be obtained from $G$ by a sequence of vertex deletions and edge contractions, which implies that $\tin(G')\le \tin(G)$.
\end{proof}

We now show a result that we will use in the proof of \cref{sufficient conditions for bounded tree-alpha}, and which can find further applications when combined with Tarjan's decomposition of a graph along clique cutsets~\cite{MR798539}.
A \emph{cut-partition} in a graph $G$ is a triple $(A,B,C)$ such that $A\cup B\cup C = V(G)$, $A\neq \emptyset$, $B\neq \emptyset$, and $G$ has no edges with one endpoint in $A$ and one in $B$.
A \emph{clique cutset} in $G$ is a clique $C$ such that $G$ has a cut-partition $(A,B,C)$ for some $A$ and $B$.

\begin{proposition}\label{reduction-to-atoms-basic}
Let $C$ be a clique cutset in a graph $G$ and let $(A,B,C)$ be a cut-partition of $G$.
Let $G_A = G[A\cup C]$ and $G_B = G[B\cup C]$, and let $\mathcal{T}_A$ and $\mathcal{T}_B$ be tree decompositions of $G_A$ and $G_B$, respectively.
Then we can compute in time $\mathcal{O}(|\mathcal{T}_A|+|\mathcal{T}_B|)$ a tree decomposition $\mathcal{T}$ of $G$ such that $\alpha(\mathcal{T}) = \max\{\alpha(\mathcal{T}_A),\alpha(\mathcal{T}_B)\}$.
\end{proposition}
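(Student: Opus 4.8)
The plan is to glue $\mathcal{T}_A$ and $\mathcal{T}_B$ into a single tree decomposition by joining a bag of $\mathcal{T}_A$ containing $C$ to a bag of $\mathcal{T}_B$ containing $C$ with one new edge, leaving all bags untouched. First I would invoke \cref{clique in bag}: since $C$ is a clique of $G$, it is also a clique of the induced subgraph $G_A = G[A\cup C]$, so there is a node $a$ of $T_A$ with $C\subseteq X_a$; symmetrically there is a node $b$ of $T_B$ with $C\subseteq X_b$. (If $C=\emptyset$ one takes $a$, $b$ to be arbitrary nodes, which exist because $A\neq\emptyset$ and $B\neq\emptyset$ imply that $G_A$ and $G_B$ are nonnull.) I would then let $\mathcal{T}=(T,\{X_t\}_{t\in V(T)})$ be the tree decomposition whose underlying tree $T$ is the disjoint union of $T_A$ and $T_B$ together with the edge $ab$, and whose bags are exactly the bags of $\mathcal{T}_A$ and $\mathcal{T}_B$.

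Next I would check that $\mathcal{T}$ is a tree decomposition of $G$. The graph $T$ is a tree, being obtained from two trees by adding one edge between them. Every vertex of $G$ lies in $A\cup C$ or in $B\cup C$ and hence in some bag of $\mathcal{T}_A$ or $\mathcal{T}_B$; and since $G$ has no edge with one endpoint in $A$ and one in $B$, every edge of $G$ has both endpoints in $A\cup C$ or both in $B\cup C$, so it is covered by a bag of $\mathcal{T}_A$ or of $\mathcal{T}_B$. The step that requires care is the connectedness condition: for $u\in A$ the nodes of $T$ whose bag contains $u$ form exactly the subtree of $T_A$ associated with $u$ (no bag of $\mathcal{T}_B$ contains $u$), and symmetrically for $u\in B$; for $u\in C$ these nodes form the union of the subtree $T^A_u$ of $T_A$ and the subtree $T^B_u$ of $T_B$, which is connected because $a\in T^A_u$, $b\in T^B_u$ (both bags contain all of $C$), and $ab\in E(T)$.

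Finally, for the independence number I would note that the bags of $\mathcal{T}$ are precisely the bags of $\mathcal{T}_A$ together with those of $\mathcal{T}_B$, and that for a bag $X_t$ originating from $\mathcal{T}_A$ we have $X_t\subseteq V(G_A)$ and $G[X_t]=G_A[X_t]$, since $G_A$ is an induced subgraph of $G$ (and likewise for $\mathcal{T}_B$); hence $\alpha(\mathcal{T}) = \max_t \alpha(G[X_t]) = \max\{\alpha(\mathcal{T}_A),\alpha(\mathcal{T}_B)\}$. For the running time, a bag of $\mathcal{T}_A$ containing $C$ can be found by scanning the bags of $\mathcal{T}_A$ and selecting one that contains all of $C$, which takes time $\mathcal{O}(|\mathcal{T}_A|)$ (similarly for $\mathcal{T}_B$), and attaching the single edge $ab$ costs $\mathcal{O}(1)$, for a total of $\mathcal{O}(|\mathcal{T}_A|+|\mathcal{T}_B|)$. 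I do not foresee any genuine obstacle; the only delicate point is exactly the one highlighted above, namely that the new edge must be attached at bags containing all of $C$ so that the subtrees $T_u$ for $u\in C$ remain connected.
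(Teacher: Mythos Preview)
Your proposal is correct and follows essentially the same approach as the paper: locate bags containing $C$ via \cref{clique in bag}, glue the two trees by a single edge between those bags, and verify the tree decomposition axioms with special attention to connectedness for $u\in C$. You even add two small clarifications the paper leaves implicit (the degenerate case $C=\emptyset$, and the observation that $G[X_t]=G_A[X_t]$ for bags coming from $\mathcal{T}_A$).
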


\begin{proof}
Since $C$ is a clique in $G_A$ and in $G_B$, by \cref{clique in bag} there exists a bag $X_A$ of ${\mathcal T}_A$ such that $C\subseteq X_A$, and a bag $X_B$ in ${\mathcal T}_B$ such that $C\subseteq X_B$.
Take the disjoint union of the tree $T_A$ of ${\mathcal T}_A$ with the tree $T_B$ of ${\mathcal T}_B$, and add an edge connecting the nodes corresponding to $X_A$ and $X_B$.
This results in a tree decomposition $\mathcal{T} = (T, \{(X_t,U_t)\}_{t\in V(T)})$ of $G$.
Indeed, since every vertex of $G$ is a vertex of $G_A$ or $G_B$, every vertex of $G$ is in at least one bag of $\mathcal{T}$.
A similar argument shows that for every edge of $G$, both endpoints belong to a common bag.
Finally, let us verify that for every vertex $u\in V(G)$, the subgraph of $T$ induced by the set of bags containing $u$ is connected.
If $u\in A\cup B$, then this follows from the corresponding properties of ${\mathcal T}_A$ and ${\mathcal T}_B$.
Suppose now that $u\in C$. Then the subgraph $T_A(u)$ of $T_A$ induced by the set of bags of ${\mathcal T}_A$ containing $u$ is connected and contains the node corresponding to $X_A$.
Similarly, the subgraph $T_B(u)$ of $T_B$ induced by the set of bags of ${\mathcal T}_B$ containing $u$ is connected and contains the node corresponding to $X_B$.
Thus, the subgraph $T(u)$ of $T$ induced by the set of bags of $\mathcal{T}$ containing $u$ is isomorphic to the graph obtained from the disjoint union of $T_A(u)$ and $T_B(u)$ together with the edge between $X_A$ and $X_B$, and hence $T(u)$ is connected.
By construction, the independence number of $\mathcal T$ satisfies $\alpha(\mathcal{T}) = \max\{\alpha(\mathcal{T}_A),\alpha(\mathcal{T}_B)\}$.
Finding a bag $X_A$ such that $C\subseteq X_A$ can be done in time $\mathcal{O}(|\mathcal{T}_A|)$, and similarly for $X_B$.
Thus, the time complexity of the above procedure is
$\mathcal{O}(|\mathcal{T}_A|+|\mathcal{T}_B|)$.
\end{proof}

\begin{corollary}\label{reduction-to-atoms-tree-alpha}
Let $C$ be a clique cutset in a graph $G$, let $(A,B,C)$ be a cut-partition of $G$, and let $G_A = G[A\cup C]$ and $G_B = G[B\cup C]$.
Then $\tin(G) = \max\{\tin(G_A),\tin(G_B)\}$.
\end{corollary}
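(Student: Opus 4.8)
The plan is to derive \cref{reduction-to-atoms-tree-alpha} as a direct consequence of \cref{reduction-to-atoms-basic}, establishing the two inequalities separately. For the inequality $\tin(G) \le \max\{\tin(G_A), \tin(G_B)\}$, I would take tree decompositions $\mathcal{T}_A$ and $\mathcal{T}_B$ of $G_A$ and $G_B$ achieving the respective tree-independence numbers, i.e.\ with $\alpha(\mathcal{T}_A) = \tin(G_A)$ and $\alpha(\mathcal{T}_B) = \tin(G_B)$. Applying \cref{reduction-to-atoms-basic} to these yields a tree decomposition $\mathcal{T}$ of $G$ with $\alpha(\mathcal{T}) = \max\{\alpha(\mathcal{T}_A), \alpha(\mathcal{T}_B)\} = \max\{\tin(G_A), \tin(G_B)\}$, and since $\tin(G) \le \alpha(\mathcal{T})$ by definition, this direction follows.

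For the reverse inequality $\tin(G) \ge \max\{\tin(G_A), \tin(G_B)\}$, I would use the monotonicity of the tree-independence number under induced subgraphs. Since $G_A = G[A \cup C]$ and $G_B = G[B \cup C]$ are both induced subgraphs of $G$, and any induced subgraph is in particular an induced minor, \cref{lem:tree-indepencence number induced-minor} gives $\tin(G_A) \le \tin(G)$ and $\tin(G_B) \le \tin(G)$, hence $\max\{\tin(G_A), \tin(G_B)\} \le \tin(G)$. Combining the two inequalities yields the claimed equality.

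I do not expect any genuine obstacle here: the statement is essentially a packaging of \cref{reduction-to-atoms-basic} together with the trivial direction supplied by induced-subgraph monotonicity. The only point requiring a modicum of care is to observe that the construction in \cref{reduction-to-atoms-basic} preserves exactly (not merely up to a bound) the maximum of the two independence numbers, which is already stated there, so taking optimal decompositions on both sides is legitimate. One could alternatively phrase the proof without invoking \cref{lem:tree-indepencence number induced-minor} by arguing directly that restricting an arbitrary tree decomposition of $G$ to the vertices of $G_A$ (resp.\ $G_B$) yields a valid tree decomposition of $G_A$ (resp.\ $G_B$) with no larger independence number, but citing the earlier proposition is cleaner.
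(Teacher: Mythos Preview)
Your proposal is correct and matches the paper's own proof essentially line for line: both directions are established exactly as you describe, invoking \cref{lem:tree-indepencence number induced-minor} for the inequality $\max\{\tin(G_A),\tin(G_B)\}\le \tin(G)$ and \cref{reduction-to-atoms-basic} (applied to optimal decompositions of $G_A$ and $G_B$) for the reverse inequality. The only cosmetic difference is that the paper presents the two inequalities in the opposite order.
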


\begin{proof}
To justify that $\tin(G) = \max\{\tin(G_A),\tin(G_B)\}$, note first that $G_A$ and $G_B$ are induced subgraphs of $G$ and hence $\max\{\tin(G_A),\tin(G_B)\}\le \tin(G)$ by \cref{lem:tree-indepencence number induced-minor}.
For the converse inequality, let $\mathcal{T}_A$ and $\mathcal{T}_B$ be tree decompositions of $G_A$ and $G_B$, respectively, such that $\alpha(\mathcal{T}_A) = \tin(G_A)$ and $\alpha(\mathcal{T}_B) = \tin(G_B)$.
\Cref{reduction-to-atoms-basic} implies the existence of a tree decomposition $\mathcal{T}$ of $G$ such that $ \alpha(\mathcal{T}) = \max\{\alpha(\mathcal{T}_A),\alpha(\mathcal{T}_B)\}$.
It follows that $\tin(G)\le \alpha(\mathcal{T}) = \max\{\alpha(\mathcal{T}_A),\alpha(\mathcal{T}_B)\} = \max\{\tin(G_A),\tin(G_B)\}$.
\end{proof}

We conclude this section by summarizing some sufficient conditions for a graph class to have bounded tree-independence number.
To state the result we need to introduce names for two further graph invariants.
Given a graph $H$ and connected subgraphs $H_1,\ldots, H_p$ of $H$, the \emph{intersection graph} of $H_1,\ldots, H_p$ is the graph with vertex set $\{1,\ldots, p\}$ in which two distinct vertices $i$ and $j$ are adjacent if and only if $H_i$ and $H_j$ have a vertex in common.
We say that a graph $G$ is \emph{representable} in a graph $H$ if there exist connected subgraphs $H_1,\ldots, H_p$ of $H$ such that $G$ is isomorphic to their intersection graph.
Note that every graph $G$ is representable in the graph $H$ obtained by subdividing each edge of $G$ exactly once: each vertex $v$ of $G$ corresponds to the subgraph of $H$ induced by the closed neighborhood of $v$ in $H$.
In particular, this means that the following graph invariant is well-defined.
The \emph{intersection treewidth} of a graph $G$, denoted by $\itw(G)$, is the smallest nonnegative integer $k$ such that there exists a graph $H$ with treewidth at most $k$ such that $G$ is representable in $H$.
Note that if $G$ is a chordal graph, then $\itw(G)\le 1$, and if $G$ is a circular-arc graph, then $\itw(G)\le 2$.

Furthermore, we define the \emph{maximum minimal separator size} of a graph $G$, denoted by $\mms(G)$, as the smallest nonnegative integer $k$ such that every minimal separator of $G$ has cardinality at most $k$.
If $G$ is a complete graph, then $G$ does not have any minimal separators.
We extend the definition to this case by setting $\mms(G) = 0$ if $G$ is complete.

\begin{theorem}\label{sufficient conditions for bounded tree-alpha}
Let $\G$ be a graph class that has either bounded treewidth, bounded independence number, bounded intersection treewidth, or bounded maximum minimal separator size.
Then $\G$ has bounded tree-independence number.
\end{theorem}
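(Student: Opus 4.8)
The plan is to handle each of the four sufficient conditions in turn, producing in each case an explicit bound on the tree-independence number.

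\textbf{Bounded treewidth.} If $\G$ has treewidth at most $t$, then by \cref{tree-independence number bounded by treewidth} every graph in $\G$ has tree-independence number at most $t+1$, so $\G$ has bounded tree-independence number.

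\textbf{Bounded independence number.} If every graph in $\G$ has independence number at most $a$, then by \cref{tin is at most alpha} every graph in $\G$ has tree-independence number at most $a$.

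\textbf{Bounded intersection treewidth.} Suppose $\itw(G)\le t$ for every $G\in\G$. Fix $G\in\G$; then there is a graph $H$ with $\tw(H)\le t$ and connected subgraphs $H_1,\dots,H_p$ of $H$ with $G$ isomorphic to their intersection graph. The main step here is to build, from a width-$t$ tree decomposition $(T,\{Y_t\}_{t\in V(T)})$ of $H$, a tree decomposition of $G$ of small independence number: keep the same tree $T$, and for each node $t$ put into the bag $X_t$ exactly those indices $i$ such that $V(H_i)\cap Y_t\neq\emptyset$. Connectivity of each $H_i$ together with the tree-decomposition axioms for $H$ guarantees that the nodes $t$ with $i\in X_t$ induce a subtree, and if $H_i$ and $H_j$ intersect in a vertex $v$ then any bag $Y_t$ containing $v$ witnesses $i,j\in X_t$; so this is a valid tree decomposition of $G$. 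Finally, if $S\subseteq X_t$ is independent in $G$, then the subgraphs $\{H_i : i\in S\}$ are pairwise vertex-disjoint yet each meets $Y_t$, so $|S|\le |Y_t|\le t+1$. Hence $\tin(G)\le t+1$.

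\textbf{Bounded maximum minimal separator size.} Suppose $\mms(G)\le s$ for every $G\in\G$. Fix $G\in\G$. The strategy is to pass to a minimal triangulation: it is classical (see, e.g., the references on minimal separators and minimal triangulations, such as \cite{MR1935387}) that there is a minimal triangulation $H=(V(G),E(G)\cup F)$ of $G$ whose maximal cliques, arranged in a clique tree, form a tree decomposition of $G$, and such that every minimal separator of $H$ is also a minimal separator of $G$; moreover each maximal clique $X$ of $H$ can be written in terms of the minimal separators contained in it. Concretely, in a chordal graph, if $X$ is a maximal clique that is not the whole graph then $X$ is the union of a vertex together with a bounded number of minimal separators adjacent to it in the clique tree — more precisely, using a clique-tree structure one can show each bag $X_t$ is contained in $\{v\}\cup\bigcup_{t'} S_{tt'}$ where the $S_{tt'}$ are the (at most $\deg_T(t)$ many) minimal separators associated to edges of the clique tree at $t$, each of size at most $s$. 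The cleaner route, which I would actually carry out, is: take the clique tree of $H$ as the tree decomposition $\mathcal{T}$ of $G$; for a bag $X_t$ and an independent set $I\subseteq X_t$ in $G$, observe that $X_t$ is a clique in $H$, and bound $\alpha(G[X_t])$ by relating large independent sets in $X_t$ to large minimal separators. In fact one shows directly that $\alpha(G[X_t])\le s+1$: if $I=\{v_0,v_1,\dots,v_m\}\subseteq X_t$ is independent in $G$ with $m\ge 1$, then for each $i\ge 1$ the set $N_G(v_i)\cap N_G(v_0)$ within a suitable subgraph contains a minimal $v_0,v_i$-separator, and a counting/Helly argument across the clique tree forces these to fit inside one minimal separator of size $\le s$, giving $m\le s$. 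Thus $\tin(G)\le s+1$.

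\textbf{Main obstacle.} The first two cases are immediate from results already proved. The intersection-treewidth case is routine once one writes down the right bag assignment. The genuine difficulty is the maximum-minimal-separator-size case: one must convert the hypothesis "all minimal separators are small" — a statement about $G$ — into a tree decomposition of $G$ with small bags-minus-a-clique, which requires invoking the theory of minimal triangulations and minimal separators (in particular that minimal separators of a minimal triangulation are minimal separators of the original graph, and that clique trees of the triangulation are tree decompositions of $G$), and then carefully bounding the independence number of each bag by a Helly-type argument on the clique tree. I expect this to be where most of the proof's length and care go.
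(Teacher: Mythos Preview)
Your first three cases are correct and match the paper's proof: the treewidth and independence-number cases invoke exactly the same results (\cref{tree-independence number bounded by treewidth} and \cref{tin is at most alpha}), and your intersection-treewidth argument is precisely the construction the paper cites from Bodlaender, Gustedt, and Telle~\cite{MR1642971}, spelled out in full rather than quoted.

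The fourth case, however, has a genuine gap. You correctly identify the minimal-triangulation route as a plausible strategy, and it is true that the minimal separators of a minimal triangulation $H$ of $G$ are minimal separators of $G$, so that they all have size at most $s$. But the crucial step---showing $\alpha(G[X_t])\le s+1$ for each maximal clique $X_t$ of $H$---is not established. Your sketch (``for each $i\ge 1$ the set $N_G(v_i)\cap N_G(v_0)$ within a suitable subgraph contains a minimal $v_0,v_i$-separator, and a counting/Helly argument across the clique tree forces these to fit inside one minimal separator of size $\le s$, giving $m\le s$'') does not constitute an argument: it is unclear which separators you mean, why they should all fit in a single minimal separator, and---even granting that---why that would bound $m$ rather than the sizes of the separators. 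Different fill edges in $X_t$ may lie in different minimal separators of $H$, and there is no evident Helly-type statement forcing the vertices $v_1,\dots,v_m$ themselves into a single separator of size $\le s$.

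The paper takes a completely different and much cleaner route for this case. It invokes a structural result of Skodinis~\cite{MR1852483}: if $\mms(G)\le k$, then $G$ is complete, or $G$ has a clique cutset, or $\tw(G)\le 2k-1$. The complete case is trivial; the bounded-treewidth case reduces to case one; and the clique-cutset case is handled by induction on $|V(G)|$ using \cref{reduction-to-atoms-basic} together with the observation that every minimal separator of an induced subgraph is contained in a minimal separator of $G$ (so the hypothesis $\mms\le k$ is inherited). This avoids minimal triangulations entirely and yields the bound $\tin(G)\le 2k-1$ without any delicate clique-tree analysis. If you want to pursue the minimal-triangulation approach, you would need to supply a real proof of the bag-wise independence bound; as it stands, the paper's inductive argument via Skodinis is both simpler and complete.
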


\begin{proof}
Fix a positive integer $k$, a graph invariant $\rho\in \{\tw,\alpha,\itw,\mms\}$ and a graph $G$ such that $\rho(G) \le k$.
It suffices to show that $\tin(G)\le f(k)$ for some function $f:\mathbb{Z}_+\to \mathbb{Z}_+$.
For $\rho = \tw$, \cref{tree-independence number bounded by treewidth} applies and we can take $f(k) = k+1$.
For $\rho = \alpha$, \cref{tin is at most alpha} applies and we can take $f$ to be the identity function.
Assume now that $\rho = \itw$, that is, $\itw(G) \le k$.
As shown by Bodlaender, Gustedt, and Telle in the proof of~\cite[Lemma~2.4]{MR1642971}, $G$ has a tree decomposition $\mathcal{T}$ in which each bag is the union of at most $k+1$ cliques in $G$.
This implies that $\mathcal{T}$ has independence number at most $k+1$ and hence $\tin(G)\le k+1$.
Finally, let $\rho = \mms$, that is, $\mms(G)\le k$.
We claim that $\tin(G)\le 2k-1$.
It follows from a result by Skodinis \cite{MR1852483} that $G$ is either a complete graph, or $G$ has a clique cutset, or $\tw(G)\le 2k-1$.
If $G$ is a complete graph, then $\tin(G)\le 1$.
If $G$ has treewidth at most $2k-1$, then $\tin(G)\le 2k-1$.
Assume now that $G$ has a clique cutset.
For every induced subgraph $G'$ of $G$ and every minimal separator $S'$ in $G'$ there exists a minimal separator $S$ in $G$ such that $S'\subseteq S$.
Indeed, if $S'$ is a minimal $u{,}v$-separator in $G'$, then any minimal $u{,}v$-separator in $G$ contained in the $u{,}v$-separator $S'\cup (V(G)\setminus V(G'))$ in $G$ satisfies $S'\subseteq S$.
Therefore, every induced subgraph $G'$ of $G$ has $\mms(G')\le \mms(G)\le k$, and we can use induction on $|V(G)|$ and \cref{reduction-to-atoms-basic} to infer that $\tin(G)\le 2k-1$.
\end{proof}

In the third paper of the series~\cite{dallard2022secondpaper}, we derive the fact the class of graphs of bounded maximum minimal separator size has bounded tree-independence number also as a consequence of a more general result, namely, that boundedness of the tree-independence number is achieved whenever the independence number of subgraphs induced by minimal separators is bounded.

\section{A refinement of the tree-independence number}\label{sec:ell-refined}

As explained in the introduction, one of the main motivations for introducing the tree-independence number of a graph is algorithmic: tree decompositions with small independence number allow for the development of efficient dynamic programming algorithms for the \textsc{Max Weight Independent Set} and related problems (see \cref{sec:MWIS,sec:implications}).
For the \textsc{Max Weight Independent Set} problem, given a graph $G$ equipped with a tree decomposition $\mathcal{T}$ with independence number at most $k$, the algorithm takes time $\mathcal{O}(|V(G)|^k)$ for each bag of $\mathcal{T}$, in order to consider all subsets of at most $k$ vertices from the bag.
In some cases, this approach can be improved.
Suppose that in each bag $X_t$ we can identify a set $U_t\subseteq X_t$ of at most a constant number $\ell$ of vertices such that the independence number of the subgraph of $G$ induced by the remaining vertices, $X_t\setminus U_t$, is at most $k$.
Then, even though we can only bound the independence number of the subgraph of $G$ induced by $X_t$ by $k+\ell$, we do not need to examine all $\mathcal{O}(|X_t|^{k+\ell}) = \mathcal{O}(|V(G)|^{k+\ell})$ subsets of the bag of size at most $k+\ell$, but only $\mathcal{O}(2^{\ell}\cdot |X_t\setminus U_t|^{k})$ of them, since every independent set of $G[X_t]$ consists of a subset of $U_t$ along with at most $k$ of the remaining vertices.
For constant values of $\ell$, the algorithm will thus take time $\mathcal{O}(|V(G)|^k)$ instead of $\mathcal{O}(|V(G)|^{k+\ell})$ for each bag, which may lead to a significantly improved running time.
This idea is formalized in the concepts of $\ell$-refined tree decompositions and their residual independence numbers.

\begin{definition}\label{definition-ell-refined}
Given a nonnegative integer $\ell$, an \emph{$\ell$-refined tree decomposition} of a graph $G$ is a pair $\widehat{\mathcal T} = (T, \{(\Bag_t,U_t)_{t \in V(T)}\})$ such that $\mathcal T = (T,  \{\Bag_t\}_{t\in V(T)})$ is a tree decomposition of $G$, and for every $t \in V(T)$ we have $U_t \subseteq X_t$ and $|U_t| \leq \ell$.
We will refer to $\mathcal{T}$ as the \emph{underlying tree decomposition} of~$\widehat{\mathcal T}$.
The \emph{width} of $\widehat{\mathcal T}$, denoted by ${\it width}(\widehat{\mathcal T})$, is defined as the width of $\mathcal{T}$.
More generally, any concept defined for tree decompositions can be naturally extended to $\ell$-refined tree decompositions, simply by considering it on the underlying tree decomposition.
\end{definition}

\begin{definition}\label{definition-ell-refined-tree-alpha}
Given a nonnegative integer $\ell$, the \emph{residual independence number} of an $\ell$-refined tree decomposition $\widehat{\mathcal T}$ of a graph $G$ is defined as $\max_{t \in V(T)} \alpha(G[X_t \setminus U_t])$ and denoted by $\widehat{\alpha}(\widehat{\mathcal T})$.
The \emph{$\ell$-refined tree-independence number} of a graph $G$ is defined as the minimum residual independence number of an $\ell$-refined tree decomposition of $G$, and denoted by $\ell$\textnormal{-}$\tin(G)$.
\end{definition}

We showcase the applicability of these notions in the third paper of the series~\cite{dallard2022secondpaper}, by showing that such tree decompositions exist, and can be computed efficiently, for any graph excluding either a $W_4$ or a $K_5^-$ as an induced minor.

The notion of an $\ell$-refined tree decomposition with bounded residual independence number generalizes the notion of $(k,\ell)$-semi clique tree decompositions studied by Jacob et al.~in~\cite{MR4189425}.
Given two nonnegative integers $k$ and $\ell$, a \emph{$(k,\ell)$-semi clique tree decomposition} of $G$ is a tree decomposition $\mathcal T = (T,  \{\Bag_t\}_{t\in V(T)})$ of $G$ such that in each bag $X_t$ there exists a subset $U_t\subseteq X_t$ and $k$ (possibly empty) cliques $C_1,\ldots, C_k$ in $G$ such that $|U_t|\le \ell$ and $\cup_{i = 1}^k C_i = X_t\setminus U_t$.
Since the independence number of any graph that can be covered by $k$ cliques is at most $k$, the $\ell$-refined tree-independence number of any graph $G$ admitting a $(k,\ell)$-semi clique tree decomposition is at most $k$.

Let us say that a graph $G=(V,E)$ is \emph{$\ell$-vertex-almost chordal} if $G$ contains a set $S\subseteq V$ of at most $\ell$ vertices such that $G-S$ is a chordal graph.
Note that it follows from \cref{chordal} that for any nonnegative integer $\ell$, any $\ell$-vertex-almost chordal graph admits an $\ell$-refined tree decomposition with residual independence number at most $1$ (or, equivalently, a $(1,\ell)$-semi clique tree decomposition).
Jacob et al.~showed in~\cite{MR4189425} that for any fixed $\ell$ and any $\ell$-vertex-almost chordal graph $G$ a $(4,7\ell+5)$-semi clique tree decomposition of $G$ can be computed in polynomial time.

A similar but different distance to chordality can be obtained by considering edge additions.
Let us say that a graph $G=(V,E)$ is \emph{$\ell$-edge-almost chordal} if $G$ can be turned into a chordal graph by adding at most $\ell$ edges.
In this respect, Fomin and Golovach defined the concept of an \emph{$\ell$-edge-almost chordal} tree decomposition of the graph as a tree decomposition where each bag can be made a clique by adding at most $\ell$ edges~\cite{MR4276552} (they called such tree decompositions \emph{$\ell$-almost chordal} tree decompositions). 
It follows from the definitions that every $\ell$-edge-almost chordal graph admits an $\ell$-edge-almost chordal tree decomposition, and that the $\ell$-refined tree-independence number of any graph $G$ admitting an $\ell$-edge-almost chordal tree decomposition is at most $1$.
Given an $\ell$-almost chordal tree decomposition, Fomin and Golovach construct dynamic programming algorithms that are subexponential in $\ell$ for various problems. 
In particular, they showed that for any fixed $\ell$ and any $\ell$-edge-almost chordal graph $G$, an $\ell$-almost chordal tree decomposition of $G$ can be computed in polynomial time.

\medskip
Note that each $\ell$-refined tree decomposition $\widehat{\mathcal T} = (T, \{(\Bag_t,U_t)_{t \in V(T)}\})$ of a graph $G$ is also an \hbox{$(\ell+1)$-refined} tree decomposition, and hence $(\ell+1)\textnormal{-}\tin(G)\le \ell\textnormal{-}\tin(G)$ for all $\ell\ge 0$ and all graphs $G$.
Furthermore, for each $\ell\ge 0$, every tree decomposition $\mathcal T = (T,  \{\Bag_t\}_{t\in V(T)})$ of a graph $G$ naturally corresponds to an $\ell$-refined tree decomposition $\widehat{\mathcal T} = (T, \{(\Bag_t,U_t)_{t \in V(T)}\})$ of $G$ such that $\widehat{\alpha}(\widehat{\mathcal T}) = \alpha({\mathcal T})$, obtained by setting $U_t = \emptyset$ for all $t\in V(T)$.
On the other hand, for any $\ell$-refined tree decomposition $\widehat{\mathcal{T}}$ of a graph $G$, the independence number of the underlying tree decomposition of $\widehat{\mathcal{T}}$ is at most $\widehat{\alpha}(\widehat{\mathcal{T}}) + \ell$.
We thus have:

\begin{observation}\label{observation-ell-tree-alpha}
For every graph $G$ and every integer $\ell\ge 0$, we have
\[\ell\textnormal{-}\tin(G)\le \tin(G)\le \ell\textnormal{-}\tin(G) + \ell\,.\]
In particular, equalities hold when $\ell = 0$, that is, $0\textnormal{-}\tin(G) = \tin(G)$.
\end{observation}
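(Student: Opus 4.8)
The plan is to verify the two inequalities separately, each being essentially a direct consequence of the two facts recorded immediately before the statement, so I expect no real obstacle.

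For the left-hand inequality $\ell\textnormal{-}\tin(G)\le \tin(G)$, I would start from a tree decomposition $\mathcal T = (T,\{X_t\}_{t\in V(T)})$ of $G$ achieving $\alpha(\mathcal T) = \tin(G)$, and pass to the $\ell$-refined tree decomposition $\widehat{\mathcal T} = (T,\{(X_t,\emptyset)\}_{t\in V(T)})$ obtained by taking all distinguished sets empty. This is a legitimate $\ell$-refined tree decomposition since $\emptyset\subseteq X_t$ and $|\emptyset| = 0\le \ell$, and its residual independence number is $\max_{t\in V(T)}\alpha(G[X_t\setminus\emptyset]) = \alpha(\mathcal T) = \tin(G)$. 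Hence $\ell\textnormal{-}\tin(G)\le \widehat{\alpha}(\widehat{\mathcal T}) = \tin(G)$.

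For the right-hand inequality $\tin(G)\le \ell\textnormal{-}\tin(G)+\ell$, I would take an $\ell$-refined tree decomposition $\widehat{\mathcal T} = (T,\{(X_t,U_t)\}_{t\in V(T)})$ of $G$ with $\widehat{\alpha}(\widehat{\mathcal T}) = \ell\textnormal{-}\tin(G)$ and consider its underlying tree decomposition $\mathcal T = (T,\{X_t\}_{t\in V(T)})$. The key elementary observation is that deleting a set of at most $\ell$ vertices from a graph decreases its independence number by at most $\ell$: if $I$ is an independent set in $G[X_t]$, then $I\setminus U_t$ is an independent set in $G[X_t\setminus U_t]$ and $|I|\le |I\setminus U_t|+|U_t|\le \alpha(G[X_t\setminus U_t])+\ell$. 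Taking the maximum over $t\in V(T)$ gives $\alpha(\mathcal T)\le \widehat{\alpha}(\widehat{\mathcal T})+\ell = \ell\textnormal{-}\tin(G)+\ell$, whence $\tin(G)\le \alpha(\mathcal T)\le \ell\textnormal{-}\tin(G)+\ell$.

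Finally, for the case $\ell = 0$, both displayed inequalities collapse to $\tin(G)\le \tin(G)$, so equality holds throughout; concretely, a $0$-refined tree decomposition is forced to have $U_t = \emptyset$ for every $t$, so its residual independence number coincides with the independence number of its underlying tree decomposition, making $0\textnormal{-}\tin(G) = \tin(G)$ transparent. The only ingredient not literally spelled out in the preceding paragraph is the trivial bound $\alpha(G-S)\ge \alpha(G)-|S|$, so there is nothing delicate to handle.
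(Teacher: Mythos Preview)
Your proof is correct and follows exactly the same approach as the paper: the paragraph immediately preceding the observation records precisely the two facts you use (setting $U_t=\emptyset$ for the left inequality, and bounding $\alpha(G[X_t])\le \alpha(G[X_t\setminus U_t])+|U_t|$ for the right), and the observation is simply their consequence. You have merely made the details of the second fact explicit.
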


Using $\ell$-refined tree decompositions, we can refine \cref{bounded tin implies bounded tw-omega} as follows.

\begin{lemma}\label{bounded tin implies bounded tw-omega-refined}
For every two non-negative integers $k$ and $\ell$, the class of graphs with $\ell$-refined tree-independence number at most $k$ is $(\tw,\omega)$-bounded, with a binding function $f(p) = R(p+1,k+1)+\ell-2$, which is upper-bounded by a polynomial in $p$ of degree $k$.
\end{lemma}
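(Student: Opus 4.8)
The plan is to follow the proof of \cref{bounded tin implies bounded tw-omega} almost verbatim, bookkeeping the at most $\ell$ additional vertices that each bag may contain because of the sets $U_t$. Fix non-negative integers $k$ and $\ell$, let $G$ be a graph with $\ell\textnormal{-}\tin(G)\le k$, and write $p=\omega(G)$. I would take an $\ell$-refined tree decomposition $\widehat{\mathcal T}=(T,\{(\Bag_t,U_t)_{t\in V(T)}\})$ of $G$ with residual independence number $\widehat{\alpha}(\widehat{\mathcal T})\le k$. For every node $t$, the graph $G[\Bag_t\setminus U_t]$ has independence number at most $k$ and clique number at most $\omega(G)=p$, so Ramsey's theorem gives $|\Bag_t\setminus U_t|\le R(p+1,k+1)-1$; since $|U_t|\le\ell$, this yields $|\Bag_t|\le R(p+1,k+1)+\ell-1$. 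As the underlying tree decomposition $\mathcal T=(T,\{\Bag_t\}_{t\in V(T)})$ is a tree decomposition of $G$, we conclude $\tw(G)\le {\it width}(\widehat{\mathcal T})\le R(p+1,k+1)+\ell-2=f(p)$.

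To obtain $(\tw,\omega)$-boundedness with binding function $f$, I still need the same estimate for every induced subgraph $G'$ of $G$. For this I would first record that $\ell\textnormal{-}\tin$ does not increase under vertex deletions: given a vertex $v$ of $G$, removing $v$ from every bag $\Bag_t$ and from every set $U_t$ produces an $\ell$-refined tree decomposition of $G-v$ whose residual independence number is at most $\widehat{\alpha}(\widehat{\mathcal T})$ (this is the $\ell$-refined counterpart of the vertex-deletion step in the proof of \cref{lem:tree-indepencence number induced-minor}). Iterating, every induced subgraph $G'$ of $G$ satisfies $\ell\textnormal{-}\tin(G')\le k$, and applying the previous paragraph to $G'$ in place of $G$ gives $\tw(G')\le f(\omega(G'))$, as required.

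It remains to note that $f$ is bounded above by a polynomial in $p$ of degree $k$, which goes exactly as in \cref{bounded tin implies bounded tw-omega}: the standard recursion $R(p,q)\le R(p-1,q)+R(p,q-1)$ implies $R(p+1,k+1)\le\binom{p+k}{k}$, and for fixed $k$ this is a polynomial in $p$ of degree $k$; adding the constant $\ell-2$ does not change the degree. I do not anticipate any genuine difficulty in carrying this out; the only point that is not immediate from \cref{bounded tin implies bounded tw-omega} is the monotonicity of $\ell\textnormal{-}\tin$ under induced subgraphs, needed to account for the ``and all induced subgraphs'' clause in the definition of $(\tw,\omega)$-boundedness, and that monotonicity is a one-line adaptation of \cref{lem:tree-indepencence number induced-minor}.
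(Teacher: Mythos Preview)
Your proposal is correct and follows essentially the same approach as the paper: fix an $\ell$-refined tree decomposition with residual independence number at most $k$, apply Ramsey's theorem to each $G[X_t\setminus U_t]$ to bound $|X_t\setminus U_t|\le R(p+1,k+1)-1$, add $|U_t|\le\ell$, and conclude. You are in fact slightly more careful than the paper, which does not spell out the monotonicity of $\ell\textnormal{-}\tin$ under vertex deletion needed for the ``all induced subgraphs'' clause; the paper implicitly relies on the class being hereditary, which your one-line adaptation of \cref{lem:tree-indepencence number induced-minor} makes explicit.
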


\begin{proof}
Recall that $R(p,q)\le \binom{p+q-2}{q-1}$ for all positive integers $p$ and $q$, which, for fixed $q$, is a polynomial in $p$ of degree $q-1$.

    Let us now fix $p \in \mathbb{Z}_+$ and let $G$ be a graph such that $\omega(G) = p$ and $\ell$-$\tin(G) \leq k$.
    Fix an $\ell$-refined tree decomposition $\widehat{\mathcal T} = (T,\{(X_t,U_t)_{t\in V(T)}\})$ of $G$ with residual independence number at most $k$.
    For every node $t\in V(T)$, the set $X_t\setminus U_t$ induces a subgraph of $G$ with independence number at most $k$ and clique number at most $p$.
    Thus, for every bag $X_t$ of $\widehat{\mathcal T}$, Ramsey's theorem implies that $|X_t| \leq |U_t|+ R(p+1,k+1)-1 \le R(p+1,k+1)+\ell-1$.
    It follows that $\tw(G) \leq R(p+1,k+1)+\ell-2$, as claimed.
\end{proof}

By~\cref{chordal}, a graph $G$ has a $0$-refined tree decomposition with (residual) independence number at most $1$ if and only if $G$ is chordal.
In this case \cref{tw-via-triangulations} implies that $\tw(G) = \omega(G)-1$.
This result generalizes to graphs admitting an $\ell$-refined tree decomposition with residual independence number at most $1$, for a nonnegative integer $\ell$.
Such a tree decomposition yields an additive approximation for treewidth to within $\ell$.

\begin{proposition}\label{tw approx for ell-refined}
Let $\ell \in \mathbb{Z}_+$ and let $G$ be a graph admitting an $\ell$-refined tree decomposition $\widehat{\mathcal T}$ with residual independence number at most $1$.
Then \hbox{${\it width}(\widehat{\mathcal T})\le \tw(G) +\ell$} and $\tw(G) \le \omega(G)-1+\ell$.
\end{proposition}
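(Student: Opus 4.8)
The plan is to extract the one structural consequence of having residual independence number at most one and then let both inequalities fall out of it. Write $\widehat{\mathcal T} = (T, \{(X_t,U_t)_{t\in V(T)}\})$. Since $\widehat{\alpha}(\widehat{\mathcal T})\le 1$, for every node $t\in V(T)$ the subgraph $G[X_t\setminus U_t]$ has independence number at most one, and hence $X_t\setminus U_t$ is a clique of $G$: an independent set of two vertices inside it would witness $\alpha(G[X_t\setminus U_t])\ge 2$. (This is exactly the $\widehat{\alpha}\le 1$ analogue of the clique-tree viewpoint underlying \cref{chordal}.) Consequently $|X_t\setminus U_t|\le \omega(G)$ for every $t$, and since $|U_t|\le \ell$ by the definition of an $\ell$-refined tree decomposition, we get $|X_t| = |U_t| + |X_t\setminus U_t| \le \omega(G) + \ell$ for all $t$. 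By the definition of the width of a tree decomposition, this gives
\[{\it width}(\widehat{\mathcal T}) \;=\; \max_{t\in V(T)} |X_t| - 1 \;\le\; \omega(G) + \ell - 1\,.\]

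From here both claims are immediate. For the first, I would invoke \cref{clique-tw-bound}, which gives $\omega(G)-1\le \tw(G)$; combining this with the displayed bound yields ${\it width}(\widehat{\mathcal T}) \le \omega(G)+\ell-1 \le \tw(G)+\ell$. For the second, recall that the underlying tree decomposition $\mathcal T = (T,\{X_t\}_{t\in V(T)})$ of $\widehat{\mathcal T}$ is, by definition, a genuine tree decomposition of $G$, and its width equals ${\it width}(\widehat{\mathcal T})$; since the treewidth is the minimum width over all tree decompositions, $\tw(G)\le {\it width}(\mathcal T) = {\it width}(\widehat{\mathcal T}) \le \omega(G)+\ell-1$, which is the stated inequality $\tw(G)\le \omega(G)-1+\ell$.

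I do not anticipate any genuine obstacle; the argument is a short counting step followed by two one-line deductions. The only point that needs a word of care is the degenerate situation where $X_t\setminus U_t$ is empty for some $t$ (for instance when $G$ is null), in which case it is not literally a clique; but then $|X_t\setminus U_t| = 0 \le \omega(G)$ holds trivially, so the key inequality $|X_t|\le \omega(G)+\ell$, and hence the whole proof, goes through unchanged. It is also worth noting in passing that for $\ell = 0$ the second inequality reads $\tw(G)\le \omega(G)-1$, which together with \cref{clique-tw-bound} recovers the familiar identity $\tw(G) = \omega(G)-1$ for chordal graphs (cf.\ \cref{chordal,tw-via-triangulations}), so the proposition is a genuine additive-$\ell$ generalization of that fact.
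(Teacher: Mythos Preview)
Your proposal is correct and follows essentially the same approach as the paper: observe that each $X_t\setminus U_t$ is a clique, bound $|X_t|\le \omega(G)+\ell$, and then combine with \cref{clique-tw-bound} and the definition of treewidth. The paper's proof compresses all of this into a single chain of inequalities on a largest bag, whereas you spell out the two conclusions separately and add the remark about the empty case, but there is no substantive difference.
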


\begin{proof}
Let us write $\widehat{\mathcal T} = (T, \{(\Bag_t,U_t)_{t \in V(T)}\})$ and let $\Bag_t$ be a largest bag of the underlying tree decomposition.
Since $X_t\setminus U_t$ induces a subgraph with independence number at most one, it is a clique in $G$.
We thus obtain, using also \cref{clique-tw-bound}, that
\[{\it width}(\widehat{\mathcal T})= |X_t|-1\le |U_t|+|X_t\setminus U_t|-1\le \ell+\omega(G)-1\le \ell+\tw(G)\,.\qedhere\]
\end{proof}

A result analogous to \cref{reduction-to-atoms-basic} (and with the same proof) also holds for $\ell$-refined tree decompositions.
This result is used in the third paper of the series~\cite{dallard2022secondpaper}.

\begin{proposition}\label{reduction-to-atoms}
Let $C$ be a clique cutset in a graph $G$ and let $(A,B,C)$ be a cut-partition of $G$.
Let $G_A = G[A\cup C]$ and $G_B = G[B\cup C]$, and let $\widehat{\mathcal{T}}_A$ and $\widehat{\mathcal{T}}_B$ be $\ell$-refined tree decompositions of $G_A$ and $G_B$, respectively.
Then we can compute in time $\mathcal{O}(|\widehat{\mathcal{T}}_A|+|\widehat{\mathcal{T}}_B|)$ an  $\ell$-refined tree decomposition $\widehat{\mathcal{T}}$ of $G$ such that $\widehat{\alpha}(\widehat{\mathcal{T}}) = \max\{\widehat{\alpha}(\widehat{\mathcal{T}}_A),\widehat{\alpha}(\widehat{\mathcal{T}}_B)\}$.
\end{proposition}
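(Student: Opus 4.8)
The plan is to follow the proof of \cref{reduction-to-atoms-basic} essentially verbatim, the only new point being that the sets $U_t$ are carried along unchanged and that the relevant quantity to track is now the residual, rather than the plain, independence number of each bag. Write $\widehat{\mathcal{T}}_A = (T_A, \{(X_t,U_t)_{t\in V(T_A)}\})$ and $\widehat{\mathcal{T}}_B = (T_B, \{(X_t,U_t)_{t\in V(T_B)}\})$, with underlying tree decompositions $\mathcal{T}_A$ and $\mathcal{T}_B$.

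First I would observe that since $C$ is a clique in both $G_A$ and $G_B$, \cref{clique in bag} applied to the underlying tree decompositions $\mathcal{T}_A$ and $\mathcal{T}_B$ produces a node $a\in V(T_A)$ with $C\subseteq X_a$ and a node $b\in V(T_B)$ with $C\subseteq X_b$; finding them takes time $\mathcal{O}(|\widehat{\mathcal{T}}_A|)$ and $\mathcal{O}(|\widehat{\mathcal{T}}_B|)$, respectively. Let $T$ be obtained from the disjoint union of $T_A$ and $T_B$ by adding the edge $ab$, and let $\widehat{\mathcal{T}} = (T, \{(X_t,U_t)_{t\in V(T)}\})$, retaining for every node exactly the pair it had in $\widehat{\mathcal{T}}_A$ or $\widehat{\mathcal{T}}_B$. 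Its underlying tree decomposition is precisely the decomposition of $G$ built in the proof of \cref{reduction-to-atoms-basic}, so it is a valid tree decomposition of $G$ by that argument: every vertex of $G$ lies in a bag and every edge of $G$ has both endpoints in a bag because the same holds in $\widehat{\mathcal{T}}_A$ and $\widehat{\mathcal{T}}_B$; and for each $u\in V(G)$ the set of nodes whose bags contain $u$ induces a subtree of $T$ --- this is inherited when $u\in A\cup B$, and when $u\in C$ it holds because the connected subtrees $T_A(u)\ni a$ and $T_B(u)\ni b$ remain connected once the edge $ab$ is added. Moreover $\widehat{\mathcal{T}}$ is $\ell$-refined, since the requirements $U_t\subseteq X_t$ and $|U_t|\le\ell$ for each $t\in V(T)$ are simply inherited from $\widehat{\mathcal{T}}_A$ or $\widehat{\mathcal{T}}_B$.

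It then remains to identify the residual independence number. For a node $t$ coming from $T_A$ we have $X_t\subseteq V(G_A)$, hence $G[X_t\setminus U_t] = G_A[X_t\setminus U_t]$, and symmetrically for $T_B$; therefore
\[
\widehat{\alpha}(\widehat{\mathcal{T}}) \;=\; \max_{t\in V(T)}\alpha(G[X_t\setminus U_t]) \;=\; \max\bigl\{\widehat{\alpha}(\widehat{\mathcal{T}}_A),\,\widehat{\alpha}(\widehat{\mathcal{T}}_B)\bigr\}.
\]
Together with the bag-finding step above, this shows the whole construction runs in time $\mathcal{O}(|\widehat{\mathcal{T}}_A|+|\widehat{\mathcal{T}}_B|)$. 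I do not expect any real obstacle: the sets $U_t$ are inert in the verification of the tree-decomposition axioms and in locating the bags containing $C$, so the argument collapses to the already-established unrefined case plus the elementary remark that restricting a bag and taking induced subgraphs commutes with the splitting $V(G) = (A\cup C)\cup(B\cup C)$ node by node.
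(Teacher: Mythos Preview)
Your proposal is correct and takes essentially the same approach as the paper, which simply states that \cref{reduction-to-atoms} holds ``with the same proof'' as \cref{reduction-to-atoms-basic}. Your explicit verification that the sets $U_t$ carry over unchanged and that $G[X_t\setminus U_t]=G_A[X_t\setminus U_t]$ (resp.\ $G_B[X_t\setminus U_t]$) for nodes coming from $T_A$ (resp.\ $T_B$) is exactly the minor addition needed to pass from the unrefined to the $\ell$-refined setting.
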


\section{Weighted independent sets}\label{sec:MWIS}

In this section, we prove that every graph class with bounded and efficiently witnessed tree-independence number admits a polynomial-time algorithm for the \textsc{Max Weight Independent Set} problem.

A tree decomposition $\mathcal T = (T, \{\Bag_t\}_{t\in V(T)})$ of a graph $G$ is said to be \emph{rooted} if we distinguish one node $r$ of $T$ which we take as the root of $T$.
This introduces natural parent-child and ancestor-descendant relations in the tree $T$.
A \emph{leaf} of $T$ is a node with no children.
Following~\cite{MR3380745}, we say that a tree decomposition $(T, \{\Bag_t\}_{t\in V(T)})$ is \emph{nice} if it is rooted and the following conditions are satisfied:
\begin{enumerate}[label=(\alph*)]
\item If $t\in V(T)$ is the root or a leaf of $T$, then $\Bag_t = \emptyset$;
\item Every non-leaf node $t$ of $T$ is one of the following three types:
  \begin{itemize}
\item {\bf Introduce node:} a node $t$ with exactly one child $t'$ such that $\Bag_t = \Bag_{t'}\cup\{v\}$ for some vertex $v\in V(G)\setminus \Bag_{t'}$;
\item {\bf Forget node:} a node $t$ with exactly one child $t'$ such that $\Bag_t = \Bag_{t'}\setminus\{v\}$ for some vertex $v\in \Bag_{t'}$;
\item {\bf Join node:} a node $t$ with exactly two children $t_1$ and $t_2$ such that $\Bag_t = \Bag_{t_1} = \Bag_{t_2}$.
\end{itemize}
\end{enumerate}
We assume that introduce and forget nodes are also labeled with the vertex $v$ which is introduced or forgotten.

It is well known that, given a graph $G$ and a tree decomposition $\mathcal T$ of $G$ with width at most $k$, one can compute a nice tree decomposition of $G$ with width at most $k$ in polynomial time (see, e.g.,~\cite{MR3380745}).
The standard approach for doing so can in fact be used to prove the following lemma.

\begin{sloppypar}
\begin{lemma}\label{lem:nice}
Given an $\ell$-refined tree decomposition $\widehat{\mathcal T} = (T, \{(\Bag_t,U_t)\}_{t\in V(T)})$ with width $k$ of a graph $G$, one can compute in time $\mathcal{O}(k^2\cdot|V(T)|)$ a nice $\ell$-refined tree decomposition
\hbox{$\widehat{\mathcal T}' = (T', \{(\Bag_{t'},U_{t'})\}_{t'\in V(T')})$} of $G$ that has at most $\mathcal{O}(k\cdot|V(T)|)$ nodes and such that for every node $t'\in V(T')$ there exists a node  $t\in V(T)$ such that $X_{t'} \subseteq X_t$ and $U_{t'} = U_t\cap X_{t'}$.
In particular, $\widehat{\alpha}(\widehat{\mathcal T}')\le \widehat{\alpha}(\widehat{\mathcal T})$.
\end{lemma}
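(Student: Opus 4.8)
The plan is to run the standard procedure (see, e.g.,~\cite{MR3380745}) that turns a tree decomposition of width $k$ into a \emph{nice} tree decomposition of width $k$ with $\mathcal{O}(k\cdot|V(T)|)$ nodes, applied to the underlying tree decomposition $\mathcal T=(T,\{X_t\}_{t\in V(T)})$ of $\widehat{\mathcal T}$, and then to observe that every bag created by this procedure is contained in some bag of $\mathcal T$, so that the refining sets can be defined by restriction. Concretely, one roots $T$ arbitrarily and performs three kinds of local modifications: (i) replace each node $t$ with $d\ge 2$ children by a binary tree of $d-1$ copies of $t$, all carrying the bag $X_t$, which become join nodes; (ii) subdivide each tree-edge between a parent $p$ and a child $c$ into a path that first forgets the vertices of $X_p\setminus X_c$ one at a time (yielding bags contained in $X_p$ and ending at the bag $X_p\cap X_c$) and then introduces the vertices of $X_c\setminus X_p$ one at a time (yielding bags contained in $X_c$ and ending at $X_c$), with the obvious modification when $X_p=X_c$; and (iii) attach above the old root a path of forget nodes deleting the vertices of its bag one at a time and ending in an empty new root, and below each old leaf a path of introduce nodes building up its bag from $\emptyset$ and ending in an empty new leaf. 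It is standard that the resulting pair $\mathcal T'=(T',\{X_{t'}\}_{t'\in V(T')})$ is a nice tree decomposition of $G$ of width $k$, that $|V(T')|=\mathcal{O}(k\cdot|V(T)|)$, and that it is computable in time $\mathcal{O}(k^2\cdot|V(T)|)$, since it manipulates $\mathcal{O}(k\cdot|V(T)|)$ bags, each of size $\mathcal{O}(k)$.

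The key additional observation is that, by inspection of steps (i)--(iii), for every $t'\in V(T')$ there is a node $t\in V(T)$ with $X_{t'}\subseteq X_t$: bags produced in (i) equal $X_t$ for the corresponding $t$, bags produced in (ii) are contained in $X_p$ or in $X_c$, and bags produced in (iii) are contained in the old root bag or in an old leaf bag. For each $t'$, fix one such node $t=:\pi(t')\in V(T)$ and set $U_{t'}:=U_{\pi(t')}\cap X_{t'}$. Then $U_{t'}\subseteq X_{t'}$ and $|U_{t'}|\le|U_{\pi(t')}|\le\ell$, so $\widehat{\mathcal T}'=(T',\{(X_{t'},U_{t'})\}_{t'\in V(T')})$ is a nice $\ell$-refined tree decomposition of $G$ (in the sense of \cref{definition-ell-refined}) satisfying $X_{t'}\subseteq X_{\pi(t')}$ and $U_{t'}=U_{\pi(t')}\cap X_{t'}$ for every $t'\in V(T')$, as required. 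Computing the sets $U_{t'}$ costs $\mathcal{O}(k)$ per node (intersecting two vertex sets of size $\mathcal{O}(k)$), hence $\mathcal{O}(k^2\cdot|V(T)|)$ in total, which does not exceed the claimed running time.

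Finally, to bound the residual independence number, fix $t'\in V(T')$ and write $t=\pi(t')$. Since $U_{t'}=U_t\cap X_{t'}$, we have
\[ X_{t'}\setminus U_{t'}=X_{t'}\setminus(U_t\cap X_{t'})=X_{t'}\setminus U_t\subseteq X_t\setminus U_t, \]
and therefore $\alpha(G[X_{t'}\setminus U_{t'}])\le\alpha(G[X_t\setminus U_t])\le\widehat{\alpha}(\widehat{\mathcal T})$. Taking the maximum over all $t'\in V(T')$ gives $\widehat{\alpha}(\widehat{\mathcal T}')\le\widehat{\alpha}(\widehat{\mathcal T})$. The part I expect to require the most care is the bookkeeping in step (ii): one must split each subdivided edge into a ``forget phase'' and an ``introduce phase'' meeting at $X_p\cap X_c$ precisely so that each intermediate bag is contained in one of the two endpoint bags (rather than merely in their union), and one must handle the degenerate case of a one-child node whose bag equals that of its child (contracting that edge, or, when both bags are empty, merging the two nodes); the remaining verifications are routine.
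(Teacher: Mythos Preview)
Your proof is correct and follows essentially the same approach as the paper: both run the standard ``make it nice'' procedure on the underlying tree decomposition, observe that every new bag is contained in some original bag, define $U_{t'}$ by restriction, and deduce the bound on $\widehat{\alpha}$ via the same chain $X_{t'}\setminus U_{t'}\subseteq X_t\setminus U_t$. The paper spells the construction out in seven explicit steps (including an initial contraction of comparable adjacent bags that you only mention as a degenerate case), but the content is the same; one small slip to fix is that in your step (ii) the forget/introduce labels are reversed relative to the parent--child direction, though this does not affect the argument.
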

\end{sloppypar}

\begin{proof}
Consider the following algorithm.

\begin{enumerate}
    \item We traverse the tree $T$ and check for every two adjacent nodes $t$ and $t'$ in $T$ if the bags $X_t$ and $X_{t'}$ are comparable (i.e., if $X_t \subseteq X_{t'}$ or $X_{t'} \subseteq X_{t}$).
    If, say, $X_t \subseteq X_{t'}$, then we contract the edge $tt'$ and label the resulting node with the pair $(X_{t'},U_{t'})$.
    Hence, we now assume that no two adjacent nodes of $T$ have comparable bags.

    \item We choose a node of $T$ with degree at most one as its root $r$ and compute the corresponding parent-child relationship in $T$.

    \item We assure that each node has at most two children, by replacing each node $t$ of $T$ with $d\ge 3$ children $c_1,\ldots, c_d$ with a path consisting of $d$ nodes $t_1,\ldots, t_{d}$, each associated with the same pair $(X_t,U_t)$, making $t_1$ a child of the parent of $t$, and for all $j\in \{1,\ldots, d\}$, making $c_j$ a child of $t_j$ (note that if $j<d$, then $t_{j+1}$ is also a child of $t_j$).

    \item For every node $t$ of $T$ with exactly two children $t_1$ and $t_2$, we label $t$ as a join node.
    For $i \in \{1,2\}$, if $X_t \neq X_{t_{i}}$, then we subdivide the edge $tt_i$ and associate the pair $(X_t,U_t)$ to the new node.

    \item For each leaf node $t$ of $T$, we add a new node $t'$ associated with the pair $(\emptyset,\emptyset)$ and make $t'$ a child of $t$.

    \item We add a new node $r'$ associated with the pair $(\emptyset,\emptyset)$ and make $r$ a child of $r'$ (that is, $r'$ becomes the new root).

    \item For every node $t$ of $T$ that is not already labeled as a join node there is a unique child $t'$ of~$t$.
    We replace the edge $t't$ with a path $(t' = t_0,\ldots, t_p, t_{p+1},\ldots, t_{p+q} = t)$ of length $p+q$ where $p = |X_{t'}\setminus X_t|$ and $q = |X_{t}\setminus X_{t'}|$.
    For all $i\in \{1,\ldots, p\}$, the node $t_i$ is a forget node labeled with $(X_{t_i},U_{t_i})$ where $X_{t_i}$ forgets one vertex from $X_{t_{i-1}}\setminus X_t$ and $U_{t_i} = U_{t'}\cap X_{t_i}$.
    Similarly, for all $j\in \{1,\ldots, q-1\}$, the node $t_{p+j}$ is an introduce node labeled with $(X_{t_{p+j}},U_{t_{p+j}})$ where $X_{t_{p+j}}$ introduces one vertex from $X_t\setminus X_{t_{p+j-1}}$, and $U_{t_{p+j}} = U_{t}\cap X_{t_{p+j}}$.
    The last node of the path is the node $t$, which is an introduce node.
\end{enumerate}
Note that each of the above steps modifies an $\ell$-refined tree decomposition into another one.
Let us denote by $\widehat{\mathcal{T}}' =  (T', \{(\Bag_{t'},U_{t'})\}_{t'\in V(T')})$ the final $\ell$-refined tree decomposition.
By construction, every non-leaf node of $T'$ has a unique label (an introduce node, a forget node, or a join node) and $\widehat{\mathcal T}'$ is a nice $\ell$-refined tree decomposition of $G$ such that for every node $t'$ of $T'$, there exists a node $t$ in $T$ such that $X_{t'} \subseteq X_t$ and $U_{t'} = U_t\cap X_{t'}$.
Consider a node $t'$ of $T'$ such that $\widehat{\alpha}(\widehat{\mathcal{T}}') = \alpha(G[X_{t'}\setminus U_{t'}])$ and let $t$ be a node of $T$ such that $X_{t'} \subseteq X_t$ and $U_{t'} = U_t\cap X_{t'}$.
Then, $X_{t'}\setminus U_{t'} = X_{t'}\setminus (U_t\cap X_{t'}) = X_{t'}\setminus U_t\subseteq X_t\setminus U_t$ and hence
$\widehat{\alpha}(\widehat{\mathcal{T}}') = \alpha(G[X_{t'}\setminus U_{t'}])\le \alpha(G[X_{t}\setminus U_{t}])\le \widehat{\alpha}(\widehat{\mathcal{T}})$.

We now reason about the complexity of obtaining $\widehat{\mathcal T}'$ by considering the complexity of each aforementioned step.
Step 1 takes $\mathcal{O}(k^2)$ time for every edge of $T$, and thus $\mathcal{O}(k^2 \cdot |V(T)|)$ overall; the resulting tree $T'$ has $\mathcal{O}(|V(T)|)$ nodes.
Step 2 can be done in time $\mathcal{O}(|V(T)|)$.
Step 3 can be done in time $\mathcal{O}(|V(T)|)$ and results in a tree $T'$ with $\mathcal{O}(|V(T)|)$ nodes.
Step 4 can be done in time $\mathcal{O}(k^2 \cdot |V(T)|)$ and Step 5 in time $\mathcal{O}(|V(T)|)$; both steps yield a tree with $\mathcal{O}(|V(T)|)$ nodes.
Step 6 can be done in constant time.
Finally, Step 7 can be done in time $\mathcal{O}(k^2\cdot |V(T)|)$ and results in an $\ell$-refined tree decomposition with $\mathcal{O}(k \cdot |V(T)|)$ nodes.
\end{proof}

In the next theorem, we adapt the standard dynamic programming approach (see, e.g.,~\cite{MR3380745}) for solving the \textsc{Max Weight Independent Set} problem in graphs of bounded treewidth to graphs of bounded $\ell$-refined tree independence number, for some integer $\ell \geq 0$.
In fact, the algorithm runs in polynomial time even if $\ell = \mathcal{O}(\log |V(G)|)$, which is the case, for example, for graphs with logarithmic treewidth (see, e.g.,~\cite{MR4480493,MR4538068}).

\begin{theorem}\label{thm:bounded-ell-refined-tree-independence-number}
For every integer $k\ge 1$, \textsc{Max Weight Independent Set} is solvable in time $\mathcal{O}(2^\ell\cdot |V(G)|^{k+1}\cdot|V(T)|)$ if the input vertex-weighted graph $G$ is given with an $\ell$-refined tree decomposition \hbox{$\widehat{\mathcal{T}} = (T, \{(\Bag_t,U_t)\}_{t\in V(T)})$} with residual independence number at most $k$.
\end{theorem}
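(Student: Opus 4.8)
The plan is to adapt the standard dynamic programming algorithm for \textsc{Max Weight Independent Set} over tree decompositions of bounded width (see, e.g.,~\cite{MR3380745}) to the setting where the number of partial solutions that must be tracked at a bag is governed not by the size of the bag but by its residual independence number. First I would invoke \cref{lem:nice} to replace $\widehat{\mathcal T}$ by a \emph{nice} $\ell$-refined tree decomposition $\widehat{\mathcal T}' = (T', \{(X_{t'},U_{t'})\}_{t'\in V(T')})$ of $G$ whose residual independence number is still at most $k$ and which has $\mathcal{O}(|V(T)|)$ nodes (using that $k$ is a fixed constant). Root $T'$ at its root $r$, and for $t\in V(T')$ let $V_t$ be the union of all bags in the subtree of $T'$ rooted at $t$.

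The DP table is indexed by pairs $(t,I)$ where $t\in V(T')$ and $I$ is an independent set of $G[X_t]$, with $c_t[I]$ defined as the maximum weight of an independent set $S$ of $G[V_t]$ such that $S\cap X_t = I$ (and $c_t[I]=-\infty$ if no such $S$ exists). The crucial point is the bound on the number of admissible index sets at $t$: since $\alpha(G[X_t\setminus U_t])\le k$, every independent set $I$ of $G[X_t]$ is the union of a subset of $U_t$ and an independent set of $G[X_t\setminus U_t]$ of size at most $k$, so the index sets can be enumerated (with filtering for independence) in time $\mathcal{O}(2^{\ell}\cdot |X_t\setminus U_t|^{k}) = \mathcal{O}(2^{\ell}\cdot |V(G)|^{k})$, the per-candidate cost of testing independence and of table lookups being polynomial in $k+\ell$ and thus absorbed by the extra factor $|V(G)|$ in the claimed bound. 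The recurrences are the textbook ones: at a leaf, $c_t[\emptyset]=0$; at an introduce node with introduced vertex $v$ and child $t'$, $c_t[I]=c_{t'}[I]$ if $v\notin I$ and $c_t[I]=c_{t'}[I\setminus\{v\}]+w(v)$ if $v\in I$, valid because $v$ has no neighbour in $V_{t'}\setminus X_{t'}$; at a forget node forgetting $v$, $c_t[I]=\max\{c_{t'}[I],\,c_{t'}[I\cup\{v\}]\}$, where the second term is used only when $I\cup\{v\}$ is independent; at a join node with children $t_1,t_2$ and $X_t=X_{t_1}=X_{t_2}$, $c_t[I]=c_{t_1}[I]+c_{t_2}[I]-w(I)$. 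Correctness of each recurrence is checked exactly as in the bounded-width case; the only extra observation needed is that $X_t\setminus U_t$ always has independence number at most $k$, so every value $c_{t'}[\,\cdot\,]$ referenced on the right-hand sides is indexed by an admissible index set of the child, i.e. the table is closed under the transitions. The optimum is read off as $c_r[\emptyset]$, and an optimal independent set is recovered by the usual backtracking.

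For the running time, $\widehat{\mathcal T}'$ has $\mathcal{O}(|V(T)|)$ nodes; each node carries $\mathcal{O}(2^{\ell}\cdot|V(G)|^{k})$ index sets, and processing one index set (generating it, checking independence, and, for forget and join nodes, performing a dictionary lookup keyed by the sorted vertex list of the set) costs $\mathcal{O}(|V(G)|)$ in the worst case; multiplying yields the bound $\mathcal{O}(2^{\ell}\cdot|V(G)|^{k+1}\cdot|V(T)|)$. I do not anticipate a real obstacle: the argument is bookkeeping on top of the known algorithm. The one step that deserves care is precisely the count of partial solutions per bag—showing that indexing $c_t$ by the independent sets of $G[X_t]$ suffices, that these are exactly the sets of the form (subset of $U_t$) $\cup$ (independent set of size $\le k$ in $G[X_t\setminus U_t]$), and that the forget- and join-node recurrences never query $c_{t'}$ on a set outside this family.
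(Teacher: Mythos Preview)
Your approach is the same as the paper's, but the running-time accounting contains a genuine slip. You invoke \cref{lem:nice} and claim the resulting nice $\ell$-refined tree decomposition has $\mathcal{O}(|V(T)|)$ nodes ``using that $k$ is a fixed constant.'' In \cref{lem:nice}, however, the symbol $k$ denotes the \emph{width} of the given tree decomposition, not the residual independence number from the theorem statement; these are unrelated, and the width may be as large as $|V(G)|-1$. Hence the nice decomposition has $\mathcal{O}(|V(G)|\cdot|V(T)|)$ nodes, not $\mathcal{O}(|V(T)|)$. Combined with your estimate of $\mathcal{O}(|V(G)|)$ work per index set, this would yield $\mathcal{O}(2^{\ell}\cdot|V(G)|^{k+2}\cdot|V(T)|)$, overshooting the target by a factor of $|V(G)|$.

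The fix is exactly what the paper does: accept the $\mathcal{O}(|V(G)|\cdot|V(T)|)$ node count, but observe that the per-index-set work is in fact $\mathcal{O}(1)$ rather than $\mathcal{O}(|V(G)|)$, since each index set has at most $k+\ell = \mathcal{O}(1)$ vertices and independence can be tested in constant time using an adjacency matrix (the recurrence lookups are likewise constant-time). This gives $\mathcal{O}(|V(G)|\cdot|V(T)|)\cdot\mathcal{O}(2^{\ell}\cdot|V(G)|^{k}) = \mathcal{O}(2^{\ell}\cdot|V(G)|^{k+1}\cdot|V(T)|)$, matching the claim. Your two inaccuracies happen to cancel numerically, but the argument as written does not stand on its own.
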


\begin{proof}
Let $n = |V(G)|$ and $w:V(G)\to \mathbb{Q}_+$ be the weight function.
We first apply \cref{lem:nice} and compute in time $\mathcal{O}(n^2\cdot|V(T)|)$
a nice $\ell$-refined tree decomposition $\widehat{\mathcal{T}}' = (T', \{(\Bag_t,U_t)\}_{t\in V(T')})$ of $G$ with $\mathcal{O}(n\cdot |V(T)|)$ bags and such that $\widehat{\alpha}(\widehat{\mathcal T}')\le \widehat{\alpha}(\widehat{\mathcal T})\le k$.

Recall that, by definition, $\widehat{\mathcal{T}}'$ is rooted at some node $r$ of $T'$.
For every node $t\in V(T')$, we denote by $V_t$ the union of all bags $\Bag_{t'}$ such that $t'\in V(T')$ is a (not necessarily proper) descendant of $t$ in $T'$.

For each node $t\in V(T')$, we compute the family ${\mathcal S}_t$ of all sets $S\subseteq X_t$ that are independent in $G$.
Note that each set $S\in {\mathcal S}_t$ is the disjoint union of sets $S_1$ and $S_2$ where $S_1 = S\cap U_t$ and $S_2 =  S \cap (X_t \setminus U_t)$.
Since $\widehat{\mathcal{T}}'$ is an $\ell$-refined tree decomposition with residual independence number at most~$k$, we have that $|S_1|\le |U_t|\le \ell$ and $|S_2|\le \alpha(G[X_t\setminus U_t])\le k$.
It follows that the family ${\mathcal S}_t$ can be computed in time $\mathcal{O}(2^{|U_t|} \cdot |X_t\setminus U_t|^k)=\mathcal{O}(2^\ell\cdot n^k)$ by enumerating all $\mathcal{O}(2^\ell)$ candidate sets for $S_1$, all $\mathcal{O}(n^k)$ candidate sets for $S_2$, and verifying if the union $S_1\cup S_2$ is independent in $G$.\footnote{We can check in constant time if a set $S$ with $|S|\le k+\ell$ is independent in $G$, since $k+\ell$ is constant and we assume that $G$ is represented with an adjacency matrix---if not, we can first compute such a representation from the adjacency lists in time $\mathcal{O}(n^2)$.}
We traverse the tree $T'$ bottom-up and use a dynamic programming approach to compute, for every node $t\in V(T')$ and every set $S\in {\mathcal S}_t$, the value of $c[t,S]$, defined as the maximum weight of an independent set $I$ in the graph $G[V_t]$ such that $I\cap X_t = S$.

Since $\widehat{\mathcal{T}}'$ is nice, we have $\Bag_r = \emptyset$; in particular, the only independent set $S$ with $S\subseteq X_r$ is the empty set.
Furthermore, $V_r = V(G)$, and hence $c[r,\emptyset]$ corresponds to the maximum weight of an independent set in $G$, which is what we want to compute.

We consider various cases depending on the type of a node $t\in V(T')$. %
For each type we give a formula for computing the value $c[t,S]$ for all $S\in {\mathcal S}_t$ from the already computed values of $c[t',S']$ where $t'$ is a child of $t$ in $T'$ and $S'\in {\mathcal S}_{t'}$.

\smallskip\noindent\textbf{Leaf node.} By the definition of a nice tree decomposition it follows that $\Bag_t=\emptyset$.
Thus, we have ${\mathcal S}_t = \{\emptyset\}$, $V_t = \emptyset$, and $c[t,\emptyset] = 0$.

\smallskip\noindent\textbf{Introduce node.}
By definition, $t$ has exactly one child $t'$ and $\Bag_t = \Bag_{t'}\cup \{v\}$ holds for some vertex $v\in V(G)\setminus X_{t'}$. For an arbitrary set $S\in \mathcal{S}_t$, we have
\[
    c[t,S] = \left\{\begin{array}{ll}
    c[t',S] & \text{if~} v \notin S\,,\\
    c[t', S \setminus \{v\}] + w(v)& \text{otherwise.}
    \end{array}\right.
\]

\smallskip\noindent\textbf{Forget node.}
By definition, $t$ has exactly one child $t'$ in $T'$ and $\Bag_{t}=\Bag_{t'}\setminus  \{v\}$ holds for some vertex $v\in \Bag_{t'}$.
Note that for a set $S\in \mathcal{S}_t$, the set $S\cup \{v\}$ belongs to $\mathcal{S}_{t'}$ if and only if it is independent in $G$, that is, if no vertex in $S$ is adjacent to $v$.
For an arbitrary set $S\in \mathcal{S}_t$, we have
\[
    c[t,S] = \left\{\begin{array}{ll}
    \max\{c[t',S], c[t',S \cup \{v\}]\}& \text{if no vertex in $S$ is adjacent to $v$,}\\
    c[t',S] & \text{otherwise.}
    \end{array}\right.
\]

\smallskip\noindent\textbf{Join node.}
By definition, $t$ has exactly two children $t_1$ and $t_2$ in $T'$ and it holds that $\Bag_t = \Bag_{t_1} = \Bag_{t_2}$. For an arbitrary set $S\in \mathcal{S}_t$, we have
\[
    c[t,S] = c[t_1,S] + c[t_2,S] - w(S)\,.
\]

The only way in which our algorithm differs from the standard one (see~\cite{MR3380745}) is that we compute $c[t,S]$ only for sets $S$ in the family ${\mathcal S}_t$, and not for all subsets of the bag $X_t$.
We therefore omit the description of the recurrence relations leading to the dynamic programming algorithm and the proof of correctness.

It remains to estimate the time complexity.
We need time $\mathcal{O}(n^2\cdot|V(T)|)$ to compute $\widehat{\mathcal{T}}'$.
At each of the $\mathcal{O}(n\cdot |V(T)|)$ nodes $t\in V(T')$, we perform a constant-time computation for each set $S\in \mathcal{S}_t$, resulting in an overall time complexity of $\mathcal{O}(2^\ell\cdot n^k)$ per node.
Thus, the total time complexity of the algorithm is
$\mathcal{O}(n^2\cdot|V(T)|) + \mathcal{O}(n\cdot |V(T)|\cdot 2^\ell \cdot n^k) = \mathcal{O}(2^\ell \cdot n^{k+1}\cdot |V(T)|)$, as claimed.
\end{proof}

Since every tree decomposition $\mathcal{T}$ naturally corresponds to $0$-refined tree decomposition $\widehat{\mathcal{T}}$ with
the same independence number (more precisely, with $\widehat{\alpha}(\widehat{\mathcal{T}}) = \alpha(\widehat{\mathcal{T}}) = \alpha(\mathcal{T})$), \cref{thm:bounded-ell-refined-tree-independence-number} implies the following.

\begin{corollary}\label{thm:bounded-tree-independence-number}
For every $k\ge 1$, \textsc{Max Weight Independent Set} is solvable in time $\mathcal{O}(|V(G)|^{k+1}\cdot|V(T)|)$ if the input vertex-weighted graph $G$ is given with a tree decomposition \hbox{$\mathcal{T} = (T, \{\Bag_t\}_{t\in V(T)})$} with independence number at most $k$.
\end{corollary}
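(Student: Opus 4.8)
The plan is to derive this as an immediate specialization of \cref{thm:bounded-ell-refined-tree-independence-number} to the case $\ell = 0$. First I would observe that any tree decomposition $\mathcal{T} = (T, \{X_t\}_{t\in V(T)})$ of $G$ gives rise to a $0$-refined tree decomposition $\widehat{\mathcal{T}} = (T, \{(X_t,U_t)\}_{t\in V(T)})$ of $G$ simply by setting $U_t = \emptyset$ for every $t \in V(T)$; this trivially satisfies the requirements of \cref{definition-ell-refined} since $\emptyset \subseteq X_t$ and $|\emptyset| = 0 \le 0$. By \cref{definition-ell-refined-tree-alpha}, the residual independence number of $\widehat{\mathcal{T}}$ is $\widehat{\alpha}(\widehat{\mathcal{T}}) = \max_{t\in V(T)} \alpha(G[X_t\setminus\emptyset]) = \max_{t\in V(T)} \alpha(G[X_t]) = \alpha(\mathcal{T}) \le k$.

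Then I would invoke \cref{thm:bounded-ell-refined-tree-independence-number} with this $\ell = 0$ refined tree decomposition: the \textsc{Max Weight Independent Set} problem is solvable in time $\mathcal{O}(2^{0}\cdot |V(G)|^{k+1}\cdot|V(T)|) = \mathcal{O}(|V(G)|^{k+1}\cdot|V(T)|)$, which is exactly the claimed bound. Since $\widehat{\mathcal{T}}$ is constructed from $\mathcal{T}$ in time linear in the size of $\mathcal{T}$, this preprocessing does not affect the asymptotic running time.

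There is essentially no obstacle here: the only thing to check is that passing from $\mathcal{T}$ to $\widehat{\mathcal{T}}$ preserves the relevant quantity ($\widehat{\alpha}(\widehat{\mathcal{T}}) = \alpha(\mathcal{T})$), which is immediate from the definitions, and that the exponent $2^\ell$ collapses to $1$ when $\ell = 0$. This correspondence was already noted in the remark preceding the statement, so the proof is a one-line appeal to \cref{thm:bounded-ell-refined-tree-independence-number}.
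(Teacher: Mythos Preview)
Your proposal is correct and matches the paper's own argument essentially verbatim: the paper derives the corollary by noting that any tree decomposition $\mathcal{T}$ yields a $0$-refined tree decomposition $\widehat{\mathcal{T}}$ with $\widehat{\alpha}(\widehat{\mathcal{T}}) = \alpha(\mathcal{T})$ and then invokes \cref{thm:bounded-ell-refined-tree-independence-number} with $\ell = 0$.
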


\Cref{thm:bounded-tree-independence-number} generalizes a result of Krause~\cite{DBLP:conf/cc/Krause13} stating that the \textsc{Max Independent Set} problem is solvable in polynomial time on intersection graphs of connected subgraphs of graphs of bounded treewidth (assuming that the input graph is given with an intersection model).
Recall that every graph class with bounded intersection treewidth has bounded tree-independence number (cf. \cref{sufficient conditions for bounded tree-alpha}).
More precisely, consider a fixed nonnegative integer $k$, a graph $H$ with treewidth at most $k$, and connected subgraphs $H_1,\ldots, H_p$ of $H$.
A tree decomposition $\mathcal{T}$ of $H$ with width $k$ can be computed in linear time using, e.g., the algorithm of Bodlaender~\cite{MR1417901}.
Then, as shown by Bodlaender, Gustedt, and Telle in the proof of~\cite[Lemma~2.4]{MR1642971}, the tree decomposition $\mathcal{T}$ can be transformed efficiently into a tree decomposition $\mathcal{T}'$ of the intersection graph $G$ of $H_1,\ldots, H_p$ such that each bag of $\mathcal{T}'$ is the union of at most $k+1$ cliques in $G$.
In particular, $\mathcal{T}'$ has independence number at most $k+1$ and thus \cref{thm:bounded-tree-independence-number} applies, implying the result of Krause.

In the third paper of the series~\cite{dallard2022secondpaper}, we present two particular examples of where the complexity of solving the \textsc{Maximum Weight Independent Set} problem is significantly improved when using the result of \cref{thm:bounded-ell-refined-tree-independence-number} as compared to using \cref{thm:bounded-tree-independence-number}.

\section{Cameron--Hell construction}\label{sec:packing}

Given a graph $G$ and a family $\HH =\{H_j\}_{j\in J}$ of connected subgraphs of $G$, we denote by $G(\HH)$ the graph with vertex set $J$, in which two distinct elements $i,j\in J$ are adjacent if and only if $H_i$ and $H_j$ either have a vertex in common or there is an edge in $G$ connecting them.
This construction was considered by Cameron and Hell in~\cite{MR2190818}, who focused on the following particular case.
Given a graph $G$ and a (finite or infinite) set $\FF$ of connected graphs, we denote by $\HH(G,\FF)$ the family of all subgraphs of $G$ isomorphic to a member of $\FF$.
In particular:
\begin{itemize}
\item for $\HH = \HH(G,\{K_1\})$, the construction of the graph $G(\HH)$ is trivial, with $G(\HH)\cong G$, and
\item for $\HH = \HH(G,\{K_2\})$, the graph $G(\HH)$ is isomorphic to the square of the line graph of $G$.
\end{itemize}
A construction similar to $G(\HH)$ was studied by Duchet~\cite{MR778751}.
More recently, Gartland et al.~\cite{10.1145/3406325.3451034} considered the special case when $\HH$ consists of all connected induced subgraphs of $G$, referring in this case to the derived graph $G(\HH)$ as the \emph{blob graph} of $G$.

Cameron and Hell proved in~\cite{MR2190818} that for any chordal graph $G$, any set $\FF$ of connected graphs, and $\HH = \HH(G,\FF)$, the graph $G(\HH)$ is chordal, generalizing an analogous result due to Cameron in the case when $\FF = \{K_2\}$~\cite{MR1011265}.
Using \cref{chordal}, the result of Cameron and Hell states that $\tin(G(\HH))\le 1$ whenever $\tin(G)\le 1$.
We now generalize this result by showing that mapping any graph $G$ to the graph $G(\HH)$, where $\HH$ is an arbitrary collection of nonnull connected subgraphs of $G$, cannot increase the tree-independence number.
In particular, this shows that for any graph class $\mathcal{G}$ with bounded tree-independence number and any set $\FF$ of connected nonnull graphs, the class $\{G(\HH):G\in \G, \HH = \HH(G,\FF)\}$ also has bounded tree-independence number.

\begin{lemma}\label{tin-of-G(H)}
Let $G$ be a graph, let \hbox{$\mathcal{T} = (T, \{X_t\}_{t\in V(T)})$} be a tree decomposition of $G$, and let $\HH =\{H_j\}_{j\in J}$ be a finite family of connected nonnull subgraphs of $G$.
Then \hbox{$\mathcal{T}' = \big(T, \{\Bag'_t\}_{t\in V(T)}\big)$} with $\Bag'_t = \{j\in J : V(H_j) \cap X_t \neq \emptyset\}$ for all $t \in V(T)$ is a tree decomposition of $G(\HH)$ such that $\alpha(\mathcal{T}')\le \alpha(\mathcal{T})$.
\end{lemma}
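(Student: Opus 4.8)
The plan is to verify directly that $\mathcal{T}'$ meets the three defining conditions of a tree decomposition of $G(\HH)$, and then to establish the inequality $\alpha(\mathcal{T}')\le\alpha(\mathcal{T})$ by comparing the two decompositions bag by bag.

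For the vertex-covering and edge-covering conditions I would argue as follows. Since each $H_j$ is nonnull, it has a vertex $v$, which lies in some bag $X_t$ of $\mathcal{T}$, and then $j\in X'_t$ by definition. If $ij\in E(G(\HH))$, then by definition of $G(\HH)$ either $H_i$ and $H_j$ share a vertex $v$ --- which lies in some bag $X_t$, so $i,j\in X'_t$ --- or there is an edge $uv\in E(G)$ with $u\in V(H_i)$ and $v\in V(H_j)$, in which case a bag $X_t$ containing both $u$ and $v$ gives $i,j\in X'_t$. The step I expect to require the most care is the connectivity condition, namely that for each $j\in J$ the set $\{t\in V(T): j\in X'_t\}=\bigcup_{v\in V(H_j)}\{t\in V(T):v\in X_t\}$ induces a subtree of $T$. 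Here I would invoke the connectedness of $H_j$: each edge $uv$ of $H_j$ is an edge of $G$, so some bag contains both $u$ and $v$, whence the subtrees $T_u$ and $T_v$ of $T$ (formed by the nodes whose bags contain $u$, resp.\ $v$) meet; walking along a path in $H_j$ between any two of its vertices then exhibits a chain of pairwise-intersecting subtrees, so the union of all $T_v$, $v\in V(H_j)$, is connected.

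Finally, for the independence-number bound it suffices to show $\alpha\big(G(\HH)[X'_t]\big)\le\alpha(G[X_t])$ for every $t\in V(T)$. Given an independent set $I=\{j_1,\dots,j_m\}$ of $G(\HH)$ with $I\subseteq X'_t$, I would pick for each $i$ a vertex $v_i\in V(H_{j_i})\cap X_t$; the $v_i$ are then pairwise distinct (otherwise two of the $H_{j_i}$ would share a vertex and hence be adjacent in $G(\HH)$) and pairwise non-adjacent in $G$ (an edge $v_iv_{i'}$ would connect $H_{j_i}$ and $H_{j_{i'}}$, again forcing adjacency in $G(\HH)$), so $\{v_1,\dots,v_m\}$ is an independent set of $G[X_t]$ of size $m$. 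Hence $\alpha(G[X_t])\ge|I|$, and taking the maximum over $t$ yields $\alpha(\mathcal{T}')\le\alpha(\mathcal{T})$. I do not anticipate a serious obstacle: the argument is essentially bookkeeping around the definition of $G(\HH)$, the only genuinely structural point being the use of the connectedness of the subgraphs $H_j$ in the third tree-decomposition axiom.
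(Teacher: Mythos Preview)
Your proposal is correct and follows essentially the same approach as the paper: both verify the three tree-decomposition axioms directly and then bound the independence number bag by bag via the same representative-picking argument. The only cosmetic difference is in the connectivity step, where you argue directly (a path in $H_j$ yields a chain of pairwise-intersecting subtrees $T_v$, hence a connected union) while the paper phrases the same idea as a proof by contradiction (take a component of the union and find a crossing edge of $H_j$); the underlying use of connectedness of $H_j$ is identical.
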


\begin{proof}
Let us first show that $\mathcal{T}'$ is a tree decomposition of $G(\HH)$.
First, note that since $V(G(\HH))= J$, for each $t\in V(T)$ the set $X_t'$ is indeed a subset of $V(G(\HH))$.

Let $j\in J$ be a vertex of $G(\HH)$.
Fix a vertex $v\in V(H_j)$ and consider any bag $X_t$ of $\mathcal{T}$ such that $v\in X_t$. Then $v\in V(H_j)\cap X_t$ and hence $j\in X_t'$.
Thus, every vertex of $G(\HH)$ belongs to a bag of~$\mathcal{T}'$.

Let $\{i,j\}$ be an edge of $G(\HH)$.
Assume first that the subgraphs $H_i$ and $H_j$ have a vertex in common, say $v$.
Since $\mathcal{T}$ is a tree decomposition of $G$, there exists some $t\in V(T)$ such that $v\in X_t$.
The fact that $v\in V(H_i)\cap X_t$ implies that $i\in X_t'$, and similarly, $j\in X_t'$.
Assume now that there exist vertices $u\in V(H_i)$ and $v\in V(H_j)$ such that $uv\in E(G)$.
Since $\mathcal{T}$ is a tree decomposition of $G$, there exists some $t\in V(T)$ such that $\{u,v\}\subseteq X_t$.
The fact that $u\in V(H_i)\cap X_t$ implies that $i\in X_t'$.
Similarly, the fact that $v\in V(H_j)\cap X_t$ implies that $j\in X_t'$.
Hence, for every edge of $G(\HH)$ there exists a bag of $\mathcal{T}'$ containing both endpoints of the edge.

Next, consider an arbitrary vertex $j\in J$ of $G(\HH)$.
We need to show that the set of nodes $t\in V(T)$ such that $j\in X_t'$ induces a connected subgraph of $T$.
Let us denote for each vertex $v\in V(H_j)$ by $T_v$ the subgraph of $T$ induced by the nodes $t\in V(T)$ such that $v\in X_t$.
Since $\mathcal{T}$ is a tree decomposition of $G$, each $T_v$ is a connected subgraph of $T$, that is, a subtree.
For a node $t\in V(T)$, the condition $j\in X_t'$ is equivalent to the condition $V(H_j)\cap X_t\neq \emptyset$, that is, there exists a vertex $v\in V(H_j)$ such that $v\in X_t$.
Therefore, $j\in X_t'$ if and only if there exists a vertex $v\in V(H_j)$ such that $t$ belongs to the tree $T_v$.
It thus suffices to show that the union $T_j$ of the trees $T_v$ over all vertices $v\in V(H_j)$ forms a connected graph.
Suppose for a contradiction that $T_j$ is not connected and fix a component $C$ of $T_j$.
Let us denote by $U$ the set of vertices $u\in V(H_j)$ such that $V(T_u)\subseteq V(C)$.
Since $V(C)\neq V(T_j)$, we have $U\neq V(H_j)$.
By the connectedness of $H_j$, there is an edge $uv\in E(H_j)$ such that $u\in U$ and $v\in V(H_j)\setminus U$.
Let $t$ be a node of $T$ such that $\{u,v\}\subseteq X_t$.
The trees $T_u$ and $T_v$ both contain node $t$, and hence the connected component $C$ of $T_j$ contains both $T_u$ and $T_v$.
This implies that $v\in U$, a contradiction.

It remains to show that $\alpha(\mathcal{T})\ge \alpha(\mathcal{T}')$.
Let $t$ be a node of $T$ maximizing the independence number of the subgraph of $G(\HH)$ induced by $X_t'$.
Let $k$ be this independence number and let $I\subseteq J$ be an independent set of cardinality $k$ in the subgraph of $G(\HH)$ induced by $X_t'$.
Then for any two distinct elements $i,j\in I$ the graphs $H_i$ and $H_j$ are vertex-disjoint subgraphs in $\HH$ such that no edge of $G$ has one endpoint in $H_{i}$ and the other one in $H_{j}$.
Each element $i\in I$ belongs to the bag $X_t'$, which implies that $V(H_i)\cap X_t\neq \emptyset$, and hence there exists a vertex $u_i$ in $V(H_i)\cap X_t$.
Since any two distinct vertices $u_i$ and $u_j$ belong to the subgraphs $H_i$ and $H_j$ of $G$, which are vertex-disjoint and with no edges between them, the set $\{u_i:i\in I\}$ is an independent set of cardinality $k$ in the subgraph of $G$ induced by $X_t$.
This shows that $\alpha(\mathcal{T})\ge k = \alpha(\mathcal{T}')$, as claimed.
\end{proof}

\cref{tin-of-G(H)} immediately implies the following.

\begin{theorem}\label{tin-can-only-go-down-when-computing-G(HH)}
Let $G$ be a graph and let $\HH$ be a finite family of connected nonnull subgraphs of $G$.
Then $\tin(G(\HH))\le \tin(G)$.
\end{theorem}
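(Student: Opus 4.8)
The plan is to deduce the bound directly from \cref{tin-of-G(H)}. First I would fix a tree decomposition $\mathcal{T} = (T, \{X_t\}_{t\in V(T)})$ of $G$ that is optimal for the tree-independence number, i.e., with $\alpha(\mathcal{T}) = \tin(G)$; such a decomposition exists since $G$ is finite and the tree-independence number is defined as a minimum over all tree decompositions of $G$. Then I would apply \cref{tin-of-G(H)} to $G$, $\mathcal{T}$, and $\HH$: this produces the pair $\mathcal{T}' = (T, \{X'_t\}_{t\in V(T)})$ with $X'_t = \{j\in J : V(H_j)\cap X_t \neq \emptyset\}$, and the lemma tells us that $\mathcal{T}'$ is a tree decomposition of $G(\HH)$ satisfying $\alpha(\mathcal{T}') \le \alpha(\mathcal{T})$.

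Since $\mathcal{T}'$ is a tree decomposition of $G(\HH)$, the definition of the tree-independence number gives $\tin(G(\HH)) \le \alpha(\mathcal{T}')$. Chaining the inequalities yields $\tin(G(\HH)) \le \alpha(\mathcal{T}') \le \alpha(\mathcal{T}) = \tin(G)$, which is exactly the claim. There is essentially no obstacle at this stage: all the substantive work---verifying that $\mathcal{T}'$ satisfies the three tree-decomposition axioms (in particular the connectivity axiom, which uses the connectedness of each $H_j$) and that no bag of $\mathcal{T}'$ can host a larger independent set than the corresponding bag of $\mathcal{T}$ (via picking one vertex $u_i \in V(H_i)\cap X_t$ for each member $i$ of an independent set in $G(\HH)[X'_t]$)---is already carried out in \cref{tin-of-G(H)}. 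The only point requiring a little care is that the lemma is stated for a \emph{finite} family of \emph{connected nonnull} subgraphs, so the theorem is inherited with exactly those hypotheses; the connectedness is essential for the connectivity axiom of $\mathcal{T}'$, and nonnullity ensures that each $H_j$ genuinely contributes a vertex to some bag $X_t$ and hence $j$ to some bag $X'_t$.

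Finally, I would note the immediate uniform consequence already announced before the statement: if $\mathcal{G}$ is any graph class of tree-independence number at most $k$ and $\FF$ is any set of connected nonnull graphs, then $\{G(\HH) : G\in \mathcal{G},\ \HH = \HH(G,\FF)\}$ also has tree-independence number at most $k$, since for each such $G$ the family $\HH(G,\FF)$ is a finite family of connected nonnull subgraphs of $G$ and the theorem applies verbatim.
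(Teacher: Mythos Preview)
Your proposal is correct and matches the paper's approach exactly: the paper simply states that the theorem follows immediately from \cref{tin-of-G(H)}, which is precisely what you do by choosing an optimal tree decomposition $\mathcal{T}$ of $G$, applying the lemma to obtain $\mathcal{T}'$ with $\alpha(\mathcal{T}')\le\alpha(\mathcal{T})=\tin(G)$, and concluding $\tin(G(\HH))\le\alpha(\mathcal{T}')\le\tin(G)$.
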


Note in particular that \cref{tin-can-only-go-down-when-computing-G(HH)} implies that for any graph $G$, the tree-independence number of its blob graph is bounded by $\tin(G)$.

We will also make use of an algorithmic version of \cref{tin-of-G(H)}.
In the analysis of the time complexity we will make use of the following standard lemma (see, e.g., \cite{MR2063679} for a similar argument for sorting adjacency lists of graphs).

\begin{lemma}\label{lem:sorting}
Let $V = \{v_1,\ldots, v_n\}$ be a set and let $\mathcal{S} =\{S_i\}_{i\in I}$ be a finite family of subsets of $V$.
Then there exists an algorithm running in time $\mathcal{O}(|V|+|I|+\sum_{i\in I}|S_i|)$ that sorts each set $S_i$ with respect to the ordering $v_1,\ldots, v_n$ of $V$.
\end{lemma}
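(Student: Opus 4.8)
The plan is to prove \cref{lem:sorting} by a single pass that distributes the pairs $(i, v_j)$ into buckets indexed by the position $j$, and then reads the buckets back in order. This is essentially a stable bucket sort performed simultaneously on all the sets $S_i$, sharing the work so that the total cost is linear in the combined input size.

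First I would fix the ordering $v_1, \ldots, v_n$ of $V$ and precompute, in time $\mathcal{O}(|V|)$, a table $\mathrm{pos}$ with $\mathrm{pos}(v_j) = j$, so that the rank of any element of $V$ can be looked up in constant time. Next I would allocate an array $B[1..n]$ of initially empty lists (one \emph{bucket} per position), costing $\mathcal{O}(|V|)$. Then, iterating over $i \in I$ and over each $v \in S_i$, I append the pair $(i, v)$ to the bucket $B[\mathrm{pos}(v)]$; this loop performs one constant-time operation per element of the input and hence runs in time $\mathcal{O}(|I| + \sum_{i\in I} |S_i|)$ (the $|I|$ term accounts for setting up an empty output list for each $S_i$ before the distribution pass). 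Finally I would scan the buckets $B[1], B[2], \ldots, B[n]$ in increasing order, and for each pair $(i,v)$ encountered, append $v$ to the (growing) sorted version of $S_i$. Since the scan touches each position once and each deposited pair once, this costs $\mathcal{O}(|V| + \sum_{i \in I}|S_i|)$. Summing the phases gives the claimed bound $\mathcal{O}(|V| + |I| + \sum_{i\in I}|S_i|)$.

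For correctness I would observe that, because the buckets are processed in increasing order of position, an element of $S_i$ with smaller rank is appended to the output list for $S_i$ before any element of larger rank; and no rank is repeated within a single $S_i$ since $S_i$ is a set. Hence the list produced for each $S_i$ is exactly $S_i$ sorted according to $v_1, \ldots, v_n$.

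The only mild subtlety — and the step I would be most careful about — is the bookkeeping ensuring that the $\mathcal{O}(n)$ cost of allocating and scanning the buckets is incurred only once and shared across all $i \in I$, rather than once per set (which would give $\mathcal{O}(|V|\cdot|I|)$); this is why the buckets are global and the distribution and collection passes iterate over the flat list of all pairs. I would also note that reusing the bucket array across repeated invocations, or allocating it lazily only over positions actually occupied, is not needed here since a single $\mathcal{O}(|V|)$ term is already allowed by the statement. Everything else is routine.
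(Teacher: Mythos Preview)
Your proof is correct and is essentially the same algorithm as the paper's, just phrased differently: the paper describes it via the bipartite incidence graph of $\mathcal{S}$ (your buckets $B[j]$ are exactly the adjacency lists of the vertices $v_j$ in that graph), but the two-pass distribute-then-collect computation and the running-time analysis are identical.
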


\begin{proof}
Let $B$ be the bipartite incidence graph of the family $\mathcal{S}$, that is, $B$ has vertex set $V\cup I$, and edge set $\{\{v,i\}: v\in V, i\in I, v\in S_i\}$.
We can compute the adjacency lists of the graph $B$ in time $\mathcal{O}(|V|+|I|+\sum_{i\in I}|S_i|)$ as follows.
We fix an ordering of the set $I$, say $I = \{i_1,\ldots, i_m\}$.
For each $i\in I$, the set $S_i$ already gives the adjacency list of $i$.
We initialize the adjacency lists for each $v\in V$ to the empty lists.
For all $j= 1,\ldots, m$, we iterate over the elements $v$ of $S_{i_j}$ and add $i_j$ to the end of the adjacency list of vertex $v$.
We now have the adjacency lists of all vertices of $B$; those for $v\in V$ are already sorted, while those for $i\in I$ need not be.

To sort the adjacency lists for vertices $i\in I$, we iterate over the adjacency lists of vertices $v\in V$ in a similar way as we did above for $i\in I$.
We reset the adjacency lists for all $i\in I$ to the empty lists.
For all $i = 1,\ldots, n$, we iterate over the elements $j\in I$ of the adjacency list of $v_i$ and add $v_i$ to the end of the adjacency list for vertex $j\in I$.
At the end of this procedure, the adjacency list of each $i\in I$ will contain exactly the elements of $S_i$, sorted with respect to the ordering $v_1,\ldots, v_n$ of $V$.
The total time complexity of the procedure is proportional to the number of vertices and edges of the graph $B$, that is, $\mathcal{O}(|V|+|I|+\sum_{i\in I}|S_i|)$.
\end{proof}

\begin{sloppypar}
\begin{corollary}\label{tin-of-G(H)-algorithmic}
There exists an algorithm that takes as input a graph $G$, a finite family $\HH =\{H_j\}_{j\in J}$ of connected nonnull subgraphs of $G$, and a tree decomposition \hbox{$\mathcal{T} = \big(T, \{\Bag_t\}_{t\in V(T)}\big)$} of $G$, and computes in time $\mathcal{O}(|J| \cdot ((|J| + |V(T)|) \cdot |V(G)| + |E(G)|))$ the graph $G(\HH)$ and a tree decomposition \hbox{$\mathcal{T}' = \big(T, \{\Bag'_t\}_{t\in V(T)}\big)$} of $G(\HH)$ such that $\alpha(\mathcal{T}')\le \alpha(\mathcal{T})$.
\end{corollary}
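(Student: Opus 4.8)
The plan is to implement directly the construction of the tree decomposition $\mathcal{T}'$ from the proof of \cref{tin-of-G(H)} and then bound the running time; since \cref{tin-of-G(H)} already establishes that the resulting pair is a tree decomposition of $G(\HH)$ with $\alpha(\mathcal{T}')\le\alpha(\mathcal{T})$, the only work left is the complexity analysis. As a first step I would preprocess the input into a convenient form: for each $j\in J$ extract the vertex set $V(H_j)\subseteq V(G)$ (the edge sets of the $H_j$ are irrelevant, as neither $G(\HH)$ nor $\mathcal{T}'$ depends on them), fix an ordering $v_1,\dots,v_n$ of $V(G)$, and invoke \cref{lem:sorting} on the family $\{V(H_j):j\in J\}\cup\{X_t:t\in V(T)\}$ to sort all these sets consistently with that ordering. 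This costs $\mathcal{O}(|V(G)|+|J|+|V(T)|+\sum_{j}|V(H_j)|+\sum_{t}|X_t|)$, which is within the target bound because $\sum_j|V(H_j)|\le|J|\cdot|V(G)|$ and $\sum_t|X_t|\le|V(T)|\cdot|V(G)|$; it also ensures the adjacency lists and bags produced below come out sorted.

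To build $G(\HH)$ I would keep a Boolean array $a$ indexed by $V(G)$, initialized to $0$ once and thereafter reset between iterations in time proportional to what was written, by recording the positions touched in a ``dirty list''. For each $i\in J$ in turn I would set $a[v]=1$ for every $v\in N_G[V(H_i)]$, obtained by scanning $V(H_i)$ together with the $G$-neighborhoods of its vertices, at cost $\mathcal{O}(|V(H_i)|+\sum_{v\in V(H_i)}d_G(v))=\mathcal{O}(|V(G)|+|E(G)|)$ (the degree sum is at most $2|E(G)|$ since $V(H_i)\subseteq V(G)$); then, for each $j\in J$, I would test in time $\mathcal{O}(|V(H_j)|)=\mathcal{O}(|V(G)|)$ whether some vertex of $V(H_j)$ carries the mark, and, if so and $i\neq j$, record $\{i,j\}$ as an edge of $G(\HH)$. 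Correctness is immediate: $a[v]=1$ holds exactly for $v\in N_G[V(H_i)]$, so the test succeeds iff $V(H_i)\cap V(H_j)\neq\emptyset$ or some edge of $G$ joins $V(H_i)$ to $V(H_j)$, which is precisely adjacency of $i$ and $j$ in $G(\HH)$. Summing over the two nested loops gives $\mathcal{O}(|J|\cdot(|V(G)|+|E(G)|+|J|\cdot|V(G)|))=\mathcal{O}(|J|^2\cdot|V(G)|+|J|\cdot|E(G)|)$ for this phase.

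For $\mathcal{T}'$ I would reuse the same marking trick on the decomposition side: for each node $t\in V(T)$, mark $X_t$ in the array at cost $\mathcal{O}(|X_t|)$, then for each $j\in J$ test whether $V(H_j)\cap X_t\neq\emptyset$ in time $\mathcal{O}(|V(H_j)|)$, placing $j$ into $X'_t$ exactly when this holds, and finally clear the marks; the bag $X'_t$ so computed is exactly the one in \cref{tin-of-G(H)}. This is $\mathcal{O}(|X_t|+|J|\cdot|V(G)|)=\mathcal{O}(|J|\cdot|V(G)|)$ per node, hence $\mathcal{O}(|V(T)|\cdot|J|\cdot|V(G)|)$ in total. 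Adding the three phases yields $\mathcal{O}(|J|^2|V(G)|+|J|\,|E(G)|+|V(T)|\,|J|\,|V(G)|)=\mathcal{O}(|J|\cdot((|J|+|V(T)|)\,|V(G)|+|E(G)|))$, as claimed, and $\alpha(\mathcal{T}')\le\alpha(\mathcal{T})$ is inherited from \cref{tin-of-G(H)}. The main thing to be careful about is the accounting rather than any conceptual difficulty: one must make sure the Boolean array is cleared in time charged to the corresponding marking step (so the $\Theta(|V(G)|)$ one-time initialization is paid only once), that the neighbor scans are charged against $|E(G)|$ and not against the whole degree sequence of $G$, and that no loop secretly costs $\Omega(|J|^2)$ without an accompanying $|V(G)|$ or $|E(G)|$ factor.
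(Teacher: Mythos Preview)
Your proposal is correct and follows essentially the same approach as the paper: both rely on \cref{tin-of-G(H)} for correctness and spend the effort on the complexity analysis, preprocessing via \cref{lem:sorting}, computing $G(\HH)$ by, for each $i\in J$, identifying $N_G[V(H_i)]$ (the paper phrases this as a BFS to distance two from an artificial vertex) and then testing each $V(H_j)$ against it, and computing the bags $X_t'$ by testing all $V(H_j)$ against each $X_t$. Your Boolean-array-with-dirty-list implementation is just a concrete realization of what the paper does with sorted-list intersections; the accounting and final bound match exactly.
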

\end{sloppypar}

\begin{proof}
Fix an arbitrary ordering of the vertex set of $G$.
Using \cref{lem:sorting}, we first sort the vertex set of each of the graphs $H_j$, $j\in J$, as well as each bag $\Bag_t$, $t\in V(T)$, with respect to the fixed ordering of $V(G)$, in time \[\mathcal{O}\Bigg(|V(G)|+|J|+\sum_{j\in J}|V(H_j)|\Bigg)+ \mathcal{O}\Bigg(|V(G)|+|V(T)|+\sum_{t\in V(T)}|\Bag_t|\Bigg) = \mathcal{O}\Big((|J|+|V(T)|)\cdot|V(G)|\Big)\,.\]
Note that using this sortedness assumption, we can compute the union and the intersection of any two sorted sets $X,Y\subseteq V(G)$ in time $\mathcal{O}(|V(G)|)$.
To compute the graph $G(\HH)$, we only need to explain how to compute its edge set, since the vertex set is $J$.
For each $j\in J$, we perform a BFS traversal up to distance two from a new vertex $v_j$ added to $G$ which we make adjacent to all the vertices of $H_j$.
Let $R_j$ be the set of vertices of $G$ reached this way.
Then, for all $i\in J\setminus\{j\}$, the graph $H_i$ is adjacent to $H_j$ in $G(\HH)$ if and only if at least one vertex of $H_i$ belongs to $R_j$.
This can be tested in time $\mathcal{O}(|V(G)|)$ by first sorting the set $R_j$ and then computing the intersection $V(H_i)\cap R_j$.
Hence, this procedure over all $j \in J$ can be carried out in time $\mathcal{O}(|J|\cdot(|V(G)|+|E(G)|)) + \mathcal{O}(|J|^2\cdot|V(G)|) = \mathcal{O}(|J|^2\cdot |V(G)|+|J|\cdot|E(G)|)$.

To compute $\mathcal{T}'$, we need to compute for each $t\in V(T)$ the bag $\Bag_t'$ consisting of all vertices $j\in J$ of $G(\HH)$ such that $V(H_j) \cap X_t \neq \emptyset$.
All the intersections $V(H_j) \cap X_t$ can be computed in time
$\mathcal{O}(|J| \cdot |V(T)| \cdot |V(G)|)$.
Thus, the total time complexity of the algorithm is
$\mathcal{O}(|J|^2 \cdot |V(G)|+|J| \cdot |E(G)|+ |J|\cdot|V(T)|\cdot|V(G)|) = \mathcal{O}(|J| \cdot ((|J| + |V(T)|) \cdot |V(G)| + |E(G)|))$.
\end{proof}

\section{Weighted independent packings}\label{sec:implications}

Consider again a graph $G$ and a family $\HH =\{H_j\}_{j\in J}$ of connected subgraphs of $G$.
A subfamily $\HH'$ of $\HH$ is said to be an \emph{independent $\HH$-packing} in $G$ if every two graphs in $\HH'$ are vertex-disjoint and there is no edge between them, that is, $\HH'$ is an independent set in the graph $G(\HH)$.
Assume now that the subgraphs in $\HH$ are equipped with a weight function $w:J\to \mathbb{Q}_+$ assigning weight $w_j$ to each subgraph $H_j$.
For any set $I\subseteq J$, we define the \emph{weight} of the family $\HH' = \{H_i\}_{i\in I}$ as the sum $\sum_{i\in I}w_i$.
Given a graph $G$, a finite family $\HH = \{H_j\}_{j\in J}$ of connected nonnull subgraphs of $G$, and a weight function $w:J\to \mathbb{Q}_+$ on the subgraphs in $\HH$, the \textsc{Max Weight Independent Packing} problem asks to find an independent $\HH$-packing in $G$ of maximum weight.
In the special case when $\mathcal{F}$ is a fixed finite family of connected graphs and $\HH = \HH(G,\FF)$ is the set of all subgraphs of $G$ isomorphic to a member of $\FF$, we obtain the \textsc{Max Weight Independent $\mathcal{F}$-Packing} problem.
This is a common generalization of several problems studied in the literature, including:
\begin{itemize}
\item the \textsc{Independent $\mathcal{F}$-Packing} problem (see~\cite{MR2190818}), which corresponds to the unweighted case,
\item the \textsc{Max Weight Independent Set} problem, which corresponds to the case $\mathcal{F} = \{K_1\}$,
\item the \textsc{Max Weight Induced Matching} problem (see, e.g.,~\cite{MR3776983,MR4151749}), which corresponds to the case $\mathcal{F} = \{K_2\}$,
\item the \textsc{Dissociation Set} problem (see, e.g.,~\cite{MR3593941,MR615221,MR2812599}), which corresponds to the case when $\mathcal{F}= \{K_1,K_2\}$ and the weight function assigns to each subgraph $H_j$, $j\in J$, the weight equal to $|V(H_j)|$, and
\item the \textsc{$k$-Separator} problem (see, e.g.,~\cite{MR3987192,MR3296270}), which corresponds to the case when $\mathcal{F}$ contains all connected graphs with at most $k$ vertices, the graph $G$ is equipped with a vertex weight function $w:V(G)\to \mathbb{Q}_+$, and the weight function on $\HH$ assigns to each subgraph $H_j$, $j\in J$, the weight equal to $\sum_{x\in V(H_j)}w(x)$.
\end{itemize}

The \textsc{Max Weight Independent Packing} problem can be reduced in polynomial time to the \textsc{Max Weight Independent Set} problem, by means of the following.

\begin{observation}\label{reduction-to-MWIS}
Let $G$ be a graph, let $\HH = \{H_j\}_{j\in J}$ be a finite family of connected nonnull subgraphs of $G$, and a let $w:J\to \mathbb{Q}_+$ be a weight function on the subgraphs in $\HH$.
Let $I$ be an independent set in $G(\HH)$ of maximum weight with respect to the weight function $w$.
Then $I$ is an independent $\HH$-packing in $G$ of maximum weight.
\end{observation}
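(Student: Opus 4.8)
The plan is to observe that this statement is essentially a translation between two equivalent descriptions of the same combinatorial object, so the proof only requires carefully unwinding the definitions. First I would recall that, by definition, a subfamily $\HH' = \{H_i\}_{i\in I}$ of $\HH$ (with $I\subseteq J$) is an \emph{independent $\HH$-packing} in $G$ exactly when any two of its members are vertex-disjoint and have no edge of $G$ between them; by the definition of the derived graph $G(\HH)$, this is precisely the condition that $I$ is an independent set in $G(\HH)$. Hence the assignment $I \mapsto \{H_i\}_{i\in I}$ is a bijection between independent sets of $G(\HH)$ and independent $\HH$-packings in $G$, and throughout we tacitly identify an index set $I\subseteq J = V(G(\HH))$ with the corresponding subfamily of $\HH$.

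Next I would note that this bijection preserves weight. The weight of the family $\{H_i\}_{i\in I}$ was defined as $\sum_{i\in I} w_i$, which is exactly $w(I)$ once $w$ is regarded as a vertex-weight function on $G(\HH)$. Consequently, maximizing the weight over all independent $\HH$-packings in $G$ is the same optimization problem as maximizing $w(I)$ over all independent sets $I$ of $G(\HH)$; since $\HH$ is finite, the maximum is attained in both cases and the two sets of optimal solutions correspond under the bijection.

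Finally I would conclude: if $I$ is an independent set of $G(\HH)$ of maximum weight, then $\{H_i\}_{i\in I}$ is an independent $\HH$-packing in $G$, and if some independent $\HH$-packing had strictly larger weight, the associated independent set of $G(\HH)$ would contradict the maximality of $I$. Therefore $I$ (identified with $\{H_i\}_{i\in I}$) is an independent $\HH$-packing of maximum weight.

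There is no real obstacle here; the only point requiring a little care is keeping the identification between $I\subseteq J$ and the subfamily $\{H_i\}_{i\in I}$ consistent and recording that finiteness of $\HH$ guarantees attainment of the maximum. This observation is precisely what lets us transfer the algorithm for \textsc{Max Weight Independent Set} from \cref{thm:bounded-tree-independence-number} to \textsc{Max Weight Independent Packing}, applied to $G(\HH)$ together with the tree decomposition of $G(\HH)$ produced by \cref{tin-of-G(H)-algorithmic}.
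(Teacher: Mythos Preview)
Your proposal is correct and matches the paper's approach: the paper states this as an observation without proof, since it follows immediately from the definitions of $G(\HH)$ and of an independent $\HH$-packing. Your write-up simply makes explicit the definitional bijection and weight preservation that the paper leaves to the reader.
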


Using \cref{tin-of-G(H)-algorithmic,reduction-to-MWIS,thm:bounded-tree-independence-number}, we can now obtain an analogous result to \cref{thm:bounded-tree-independence-number} for the \textsc{Max Weight Independent Packing} problem.

\begin{sloppypar}
\begin{theorem}\label{thm:bounded-tree-independence-number-packings}
Let $k$ be a positive integer.
Then, given a graph $G$ and a finite family $\mathcal{H} = \{H_j\}_{j \in J}$ of connected nonnull subgraphs of $G$, the \textsc{Max Weight Independent Packing} problem can be solved in time \hbox{$\mathcal{O}(|J| \cdot ((|J| + |V(T)|) \cdot |V(G)| + |E(G)|+|J|^{k}\cdot|V(T)|)$} if $G$ is given together with a tree decomposition \hbox{$\mathcal{T} = (T, \{\Bag_t\}_{t\in V(T)})$} with independence number at most $k$.
\end{theorem}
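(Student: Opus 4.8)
The plan is to reduce the \textsc{Max Weight Independent Packing} problem on $(G,\HH,w)$ to the \textsc{Max Weight Independent Set} problem on the derived graph $G(\HH)$, and then apply the dynamic programming algorithm of \cref{thm:bounded-tree-independence-number}, exploiting the fact that the tree-independence number does not increase under the Cameron--Hell construction.

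First I would invoke \cref{tin-of-G(H)-algorithmic} on the input $G$, $\HH$, and $\mathcal{T}$. This produces, in time $\mathcal{O}(|J|\cdot((|J|+|V(T)|)\cdot|V(G)|+|E(G)|))$, both the graph $G(\HH)$ (on vertex set $J$) and a tree decomposition $\mathcal{T}' = (T,\{X'_t\}_{t\in V(T)})$ of $G(\HH)$ with $\alpha(\mathcal{T}')\le \alpha(\mathcal{T})\le k$. Next, note that the weight function $w:J\to\mathbb{Q}_+$ on $\HH$ is, verbatim, a vertex-weight function on $G(\HH)$, since $V(G(\HH))=J$. By \cref{reduction-to-MWIS}, any maximum-weight independent set $I$ in $G(\HH)$ with respect to $w$ is a maximum-weight independent $\HH$-packing in $G$; so it suffices to compute such an $I$.

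Then I would feed the vertex-weighted graph $G(\HH)$ together with the tree decomposition $\mathcal{T}'$ into the algorithm of \cref{thm:bounded-tree-independence-number}. Since $\mathcal{T}'$ has independence number at most $k$, $G(\HH)$ has $|J|$ vertices, and $T$ has $|V(T)|$ nodes, this step runs in time $\mathcal{O}(|J|^{k+1}\cdot|V(T)|)$. Adding the preprocessing time and the dynamic programming time yields the claimed bound $\mathcal{O}(|J|\cdot((|J|+|V(T)|)\cdot|V(G)|+|E(G)|+|J|^{k}\cdot|V(T)|))$.

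Apart from this routine bookkeeping, there is essentially no obstacle left: the two substantive ingredients---the structural fact that $\tin(G(\HH))\le\tin(G)$ together with its algorithmic refinement in \cref{tin-of-G(H)-algorithmic}, and the dynamic programming over $\alpha$-bounded tree decompositions in \cref{thm:bounded-tree-independence-number}---have already been established. The only points meriting care are confirming that the output of \cref{tin-of-G(H)-algorithmic} is indeed a tree decomposition of $G(\HH)$ of independence number at most $k$ carried by the \emph{same} tree $T$ (so that $|V(T')|=|V(T)|$ in the running-time estimate), and that \cref{reduction-to-MWIS} transfers optimality from independent sets in $G(\HH)$ back to independent $\HH$-packings in $G$; both are immediate from the cited statements.
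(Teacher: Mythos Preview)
Your proposal is correct and follows essentially the same approach as the paper: apply \cref{tin-of-G(H)-algorithmic} to build $G(\HH)$ together with a tree decomposition of independence number at most $k$ on the same tree $T$, then invoke \cref{thm:bounded-tree-independence-number} on $G(\HH)$ and conclude via \cref{reduction-to-MWIS}, summing the running times. There is nothing to add.
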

\end{sloppypar}

\begin{proof}
Let $G$ be a graph, let $\HH = \{H_j\}_{j\in J}$ be a finite family of connected nonnull subgraphs of $G$, let $w:J\to \mathbb{Q}_+$ be a weight function on the subgraphs in $\HH$, and let \hbox{$\mathcal{T} = (T, \{\Bag_t\}_{t\in V(T)})$} be a tree decomposition of $G$ with independence number at most $k$.
By \cref{tin-of-G(H)-algorithmic}, we can compute in time $\mathcal{O}(|J| \cdot ((|J| + |V(T)|) \cdot |V(G)| + |E(G)|))$ the graph $G(\HH)$ and a tree decomposition \hbox{$\mathcal{T}' = \big(T, \{\Bag'_t\}_{t\in V(T)}\big)$} of $G(\HH)$ with independence number at most $k$.
Using \cref{thm:bounded-tree-independence-number}, we now compute in time $\mathcal{O}(|J|^{k+1}\cdot|V(T)|)$ an independent set $I$ in $G(\HH)$ of maximum weight with respect to the weight function $w$.
By \cref{reduction-to-MWIS}, $I$ is a maximum-weight independent $\HH$-packing in $G$.
The claimed running time follows.
\end{proof}

The case when the subgraphs in $\HH$ have bounded order is of particular interest, as it generalizes the \textsc{Max Weight Independent $\FF$-Packing} problem.
For this case, the time complexity can be slightly improved compared to what would be obtained by a direct application of \cref{thm:bounded-tree-independence-number-packings}.

\begin{sloppypar}
\begin{theorem}\label{prop:bounded-tree-independence-number-packings}
Let $k$ and $r$ be two positive integers.
Then, given a graph $G$ and a finite family $\mathcal{H} = \{H_j\}_{j \in J}$ of connected nonnull subgraphs of $G$ such that $|V(H_j)|\le r$ for all $j\in J$, the \textsc{Max Weight Independent Packing} problem can be solved in time \hbox{$\mathcal{O}\left(|V(G)|^{r(k+1)}\cdot|V(T)|\right)$} if $G$ is given together with a tree decomposition \hbox{$\mathcal{T} = (T, \{\Bag_t\}_{t\in V(T)})$} with independence number at most $k$.
\end{theorem}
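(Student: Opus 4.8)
The plan is to reuse the reduction from the proof of \cref{thm:bounded-tree-independence-number-packings} --- to \textsc{Max Weight Independent Set} on the Cameron--Hell graph $G(\HH)$, equipped with the tree decomposition $\mathcal{T}' = (T,\{\Bag'_t\}_{t\in V(T)})$ of \cref{tin-of-G(H)} with $\Bag'_t = \{j\in J: V(H_j)\cap \Bag_t\neq\emptyset\}$ and $\alpha(\mathcal{T}')\le\alpha(\mathcal{T})\le k$ --- but to bypass the step of \cref{tin-of-G(H)-algorithmic} that explicitly lists the edge set of $G(\HH)$, since that alone costs $\Theta(|J|^{2}|V(G)|)$ and would dominate the budget when $k$ is small. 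The starting observation is the crude bound $|J| = \mathcal{O}(|V(G)|^{r})$, with the implied constant depending only on $r$: each $H_j$ is a connected subgraph of $G$ on at most $r$ vertices, and there are at most $\sum_{i=1}^{r}\binom{|V(G)|}{i}2^{\binom{i}{2}}$ pairwise distinct such subgraphs. In particular $|J|^{k+1} = \mathcal{O}(|V(G)|^{r(k+1)})$, since $k$ and $r$ are fixed.

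First I would compute $\mathcal{T}'$ alone, i.e., the bags $\Bag'_t$ for all $t\in V(T)$, using the sorting routine of \cref{lem:sorting} exactly as in the proof of \cref{tin-of-G(H)-algorithmic}; this takes time $\mathcal{O}(|J|\cdot|V(T)|\cdot|V(G)|)=\mathcal{O}(|V(G)|^{r+1}|V(T)|)$ and requires none of the BFS-based edge computation of $G(\HH)$. Then I would run the dynamic program of \cref{thm:bounded-tree-independence-number} (equivalently, \cref{thm:bounded-ell-refined-tree-independence-number} with $\ell=0$) on $(G(\HH),w,\mathcal{T}')$, with a single modification: instead of assuming $G(\HH)$ is stored as an adjacency matrix, each adjacency query between two elements $i,j\in J$ in $G(\HH)$ is answered on the fly by checking whether $H_i$ and $H_j$ share a vertex or some edge of $G$ joins them --- only $\mathcal{O}(r^{2})=\mathcal{O}(1)$ adjacency queries in $G$, for which we may assume (after an $\mathcal{O}(|V(G)|^{2})$ preprocessing, if needed) that $G$ is given by an adjacency matrix. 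Every place in that algorithm that consults the graph --- testing independence when forming the families $\mathcal{S}_t$, and the ``is adjacent to $v$'' test at forget nodes --- uses only $\mathcal{O}(1)$ such queries per candidate set. Since $\alpha(\mathcal{T}')\le k$, the nice refined tree decomposition produced by \cref{lem:nice} has $\mathcal{O}(|J|\cdot|V(T)|)$ nodes (the width of $\mathcal{T}'$ being at most $|J|-1$), and at each node the family of independent sets of size at most $k$ inside the bag is enumerated and processed in time $\mathcal{O}(|J|^{k})$. Hence the dynamic program runs in time $\mathcal{O}(|J|^{k+1}|V(T)|)=\mathcal{O}(|V(G)|^{r(k+1)}|V(T)|)$, which absorbs the $\mathcal{O}(|V(G)|^{r+1}|V(T)|)$ preprocessing, the $\mathcal{O}(|V(G)|^{2})$ for the adjacency matrix of $G$, and the $\mathcal{O}(|J|^{2}|V(T)|)$ spent inside \cref{lem:nice}. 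Correctness then follows from \cref{reduction-to-MWIS}: a maximum-weight independent set of $G(\HH)$ is a maximum-weight independent $\HH$-packing of $G$.

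The only real obstacle is the bookkeeping in the second step: one must check that replacing the adjacency matrix of $G(\HH)$ by constant-time on-the-fly queries does not slow down any of the introduce/forget/join/leaf updates in the algorithm of \cref{thm:bounded-tree-independence-number}, so that the expensive edge-listing of $G(\HH)$ in \cref{tin-of-G(H)-algorithmic} can indeed be skipped. Everything else is a direct combination of \cref{tin-of-G(H)}, the dynamic program of \cref{thm:bounded-tree-independence-number}, \cref{reduction-to-MWIS}, and the bound $|J|=\mathcal{O}(|V(G)|^{r})$.
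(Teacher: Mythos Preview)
Your argument is correct, and it shares the paper's key observation: because each $H_j$ has at most $r$ vertices, adjacency in $G(\HH)$ between any two indices $i,j\in J$ can be decided with $\mathcal{O}(r^2)=\mathcal{O}(1)$ lookups in the adjacency matrix of $G$. You and the paper diverge only in how you exploit this.

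The paper does \emph{not} invoke \cref{tin-of-G(H)-algorithmic} here. Instead it simply precomputes the full edge set of $G(\HH)$ in time $\mathcal{O}(|J|^2)=\mathcal{O}(|V(G)|^{2r})$ using the $\mathcal{O}(1)$-per-pair test above, builds $\mathcal{T}'$ in time $\mathcal{O}(|V(G)|^{r+1}|V(T)|)$, and then runs \cref{thm:bounded-tree-independence-number} as a black box on $(G(\HH),w,\mathcal{T}')$. Since $k\ge 1$ gives $2r\le r(k+1)$, the $\mathcal{O}(|V(G)|^{2r})$ precomputation is absorbed in the final bound and no modification of the dynamic program is needed. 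Your on-the-fly variant avoids materializing $E(G(\HH))$ at the cost of opening up the algorithm of \cref{thm:bounded-tree-independence-number} and checking that every adjacency test there is still $\mathcal{O}(1)$; this is true, but the paper's route sidesteps that bookkeeping entirely. In short: same insight, same running time, but the paper's precompute-then-black-box approach is the cleaner packaging.
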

\end{sloppypar}

\begin{sloppypar}
\begin{proof}
We assume that $G$ is represented with an adjacency matrix, since otherwise we can first compute such a representation from the adjacency lists in time $\mathcal{O}(|V(G)|^2)$.
Note that $|V(G(\HH))| = |J| = \mathcal{O}(|V(G)|^r)$, since by assumption each graph in $\HH$ has at most $r$ vertices, and for any such set of vertices we have at most $2^{r(r-1)/2} = \mathcal{O}(1)$ choices for the edge set.
We compute the edge set of $G(\HH)$ in time $\mathcal{O}(|V(G(\HH))|^2) = \mathcal{O}(|V(G)|^{2r})$, as follows.
For every two distinct $i,j\in J$, we check in time $\mathcal{O}(\max\{|V(H_i)|,|V(H_j)|\}) = \mathcal{O}(1)$ if $H_i$ and $H_j$ have a vertex in common.
If this is the case, then we add $\{i,j\}$ to the edge set of $G(\HH)$.
If this is not the case, then we check in time $\mathcal{O}(|V(H_i)|\cdot|V(H_j)|) = \mathcal{O}(1)$ if there is an edge in $G$ connecting a vertex of $H_i$ with a vertex of $H_j$.
If this is the case, then we add $\{i,j\}$ to the edge set of $G(\HH)$.

For the rest of the proof, we use the same approach as in the proof of \cref{thm:bounded-tree-independence-number-packings}.
In particular, we compute the tree decomposition $\mathcal{T}'$ of $G(\HH)$ in time $\mathcal{O}(|J| \cdot |V(T)|\cdot |V(G)|) = \mathcal{O}(|V(G)|^{r+1}\cdot |V(T)|)$, and a maximum-weight independent set $I$ in
$G(\HH)$ in time \hbox{$\mathcal{O}(|J|^{k+1}\cdot|V(T)|) = \mathcal{O}(|V(G)|^{r(k+1)}\cdot|V(T)|)$}.
The total time complexity of the algorithm is $\mathcal{O}(|V(G)|^{2r} + |V(G)|^{r+1}\cdot |V(T)| + |V(G)|^{r(k+1)}\cdot|V(T)|)$, which simplifies to $\mathcal{O}(|V(G)|^{r(k+1)}\cdot|V(T)|)$, as claimed.
\end{proof}
\end{sloppypar}

\begin{corollary}\label{cor:bounded-tree-independence-number-packings}
Let $\FF$ be a nonempty finite set of connected nonnull graphs and let $r$ be the maximum number of vertices of a graph in $\FF$.
Then, for every $k\ge 1$, the \textsc{Max Weight Independent $\FF$-Packing} problem is solvable in time $\mathcal{O}\left(|V(G)|^{r(k+1)}\cdot|V(T)|\right)$ if the input graph $G$ is given with a tree decomposition \hbox{$\mathcal{T} = (T, \{\Bag_t\}_{t\in V(T)})$} with independence number at most $k$.
\end{corollary}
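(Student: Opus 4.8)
The plan is to obtain Corollary~\ref{cor:bounded-tree-independence-number-packings} as a direct specialization of Theorem~\ref{prop:bounded-tree-independence-number-packings}. First I would recall that, by definition, an instance of the \textsc{Max Weight Independent $\FF$-Packing} problem on a graph $G$ is precisely an instance of the \textsc{Max Weight Independent Packing} problem with the family $\HH = \HH(G,\FF)$ consisting of all subgraphs of $G$ isomorphic to a member of $\FF$, equipped with the given weight function on these subgraphs. Since $\FF$ is a fixed finite set of connected nonnull graphs and $r$ is the maximum number of vertices of a graph in $\FF$, every member of $\HH$ is connected, nonnull, and has at most $r$ vertices, so the hypotheses of Theorem~\ref{prop:bounded-tree-independence-number-packings} (with the same $k$ and $r$) are met.

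Next I would check that the family $\HH = \HH(G,\FF)$, together with its weights, can be made available explicitly within the claimed time budget. Each member of $\HH$ is determined by a vertex set $S\subseteq V(G)$ with $|S|\le r$ and an edge set on $S$; there are $\mathcal{O}(|V(G)|^r)$ choices for $S$ and, since $|S|\le r$, only $2^{\binom{r}{2}}=\mathcal{O}(1)$ possible edge sets on $S$. For each candidate pair one tests in constant time (as $r$ and $|\FF|$ are fixed) whether it is a subgraph of $G$ on $S$ isomorphic to some member of $\FF$. Thus $\HH(G,\FF)$ can be enumerated in time $\mathcal{O}(|V(G)|^r)$, and in particular $|J| = \mathcal{O}(|V(G)|^r)$.

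Finally I would invoke Theorem~\ref{prop:bounded-tree-independence-number-packings} on $G$, the family $\HH = \HH(G,\FF)$, and the tree decomposition $\mathcal{T}$ of independence number at most $k$. It returns a maximum-weight independent $\HH$-packing in $G$, which by the identification above is exactly a maximum-weight independent $\FF$-packing, in time $\mathcal{O}(|V(G)|^{r(k+1)}\cdot|V(T)|)$. The preprocessing cost $\mathcal{O}(|V(G)|^r)$ of building $\HH(G,\FF)$ is dominated by this bound, since $r \le r(k+1)$ for $k\ge 1$, so the overall running time is $\mathcal{O}(|V(G)|^{r(k+1)}\cdot|V(T)|)$, as claimed.

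I do not expect any genuine obstacle: the statement is essentially a bookkeeping corollary, and the only point requiring a moment's care is confirming that the enumeration of $\HH(G,\FF)$ does not dominate the running time of Theorem~\ref{prop:bounded-tree-independence-number-packings}, which it does not.
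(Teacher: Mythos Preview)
Your proposal is correct and follows exactly the approach the paper intends: the corollary is stated without proof, immediately after Theorem~\ref{prop:bounded-tree-independence-number-packings}, as a direct specialization to the case $\HH = \HH(G,\FF)$. Your extra care in verifying that enumerating $\HH(G,\FF)$ fits within the time bound is a sensible addition, since Theorem~\ref{prop:bounded-tree-independence-number-packings} assumes $\HH$ is part of the input.
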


\begin{remark}
The reader may wonder why we did not derive a result generalizing \cref{cor:bounded-tree-independence-number-packings} by using the notion of $\ell$-refined tree decompositions of bounded residual independence number, as we did in \cref{thm:bounded-ell-refined-tree-independence-number} for the \textsc{Max Weight Independent Set} problem.
The reason is that \cref{tin-of-G(H)} does not seem to generalize to $\ell$-refined tree decompositions, at least not so that the residual independence number would be preserved.

Consider the following example.
Fix two positive integers $\ell$ and $p$ and let $G$ be the tree consisting of a vertex $a$ adjacent to $\ell$ other vertices forming a set $B = \{b_1,\ldots, b_\ell\}$ such that each $b_i$ is also adjacent to $p$ vertices of degree one, forming a set $C_i$.
Let $\mathcal{T} = (T,\{X_t\}_{t\in V(T)})$ be a tree decomposition of $G$ such that $T$ is the graph $K_{1,p\ell}$, the high-degree node of $T$ is labeled with the bag $\{a\}\cup B$, and the $p\ell$ leaves of $T$ are labeled with bags corresponding to the edges in $G$ containing a vertex of degree one.
To make this tree decomposition $\ell$-refined, we set $U_t = B$ for the high-degree node $t$ of $T$ and $U_t = \emptyset$ for all the other nodes.
The residual independence number of this $\ell$-refined tree decomposition is $1$.
Let $\HH = \HH(G,\{K_2\})$ be the family of all connected subgraphs of $G$ of order two.
Then, the graph $G(\HH)$ is isomorphic to the graph obtained from the graph $K_{1,\ell}$ by substituting a clique of size $\ell+1$ into the vertex of degree $\ell$ and a clique of size $p$ into each vertex of degree one (see, e.g.,~\cite{MR3096332} for the definition of substitution).
Let \hbox{$\mathcal{T}' = (T, \{\Bag'_t\}_{t\in V(T)})$} be the tree decomposition of $G(\HH)$ as defined in  \cref{tin-of-G(H)}, that is, $\Bag'_t = \{H \in V(G(\HH)) : V(H) \cap \Bag_t \neq \emptyset\}$ for all $t\in V(T)$.
Then, there is no way to turn $\mathcal{T}'$ into an $f(\ell)$-refined tree decomposition of $G(\HH)$ with residual independence number $1$.
Indeed, consider the bag $X_t = \{a\}\cup B$ of $\mathcal{T}$ labeling the high-degree node of $T$.
Since every edge of $G$ has an endpoint in $X_t$, the bag of $\mathcal{T}'$ corresponding to node $t$  is $\Bag'_t = V(G(\HH))$.
Using the structure of the graph $G(\HH)$, we see that the smallest subset $U_t'$ of $X_t'$ such that the independence number of the subgraph of $G(\HH)$ induced by $X_t'\setminus U_t'$ is $1$ has size $(\ell-1)p$, which cannot be bounded from above by any function depending only on $\ell$.
\end{remark}

\section{Open questions (and some answers)}\label{sec:open-questions}

Our main motivation for the study of graph classes with bounded tree-independence number is the fact that for any finite family $\HH$ of connected graphs, the \textsc{Max Weight Independent Subgraph Packing} problem is solvable in polynomial time in any class of graphs with bounded and efficiently witnessed tree-independence number (cf.~\cref{thm:bounded-tree-independence-number-packings}).
This motivates the following question.

\begin{question}\label{computing-tree-decompositions}
Is there a computable function $f:\mathbb{Z}_+\to\mathbb{Z}_+$ such that for every positive integer $k$ there exists a polynomial-time algorithm that takes as input a graph $G$ with tree-independence number at most $k$ and computes a tree decomposition of $G$ with independence number at most $f(k)$?
\end{question}

A positive answer to \cref{computing-tree-decompositions} would imply a positive answer to the following question.

\begin{question}\label{mwis-complexity-for-bounded-tin}
Is the \textsc{Max Weight Independent Set} problem polynomial-time solvable in any class of graphs with bounded tree-independence number?
\end{question}

While we showed that the problem of computing the tree-independence number is \NP-hard, it remains an open question whether the problem can be solved in polynomial time for fixed values of this parameter.

\begin{question}\label{computing-constant-tree-independence-number}
For a fixed integer $k\ge 2$, what is the complexity of recognizing graphs with tree-independence number at most $k$?
\end{question}

For every graph $G$, it holds that $|V(G)| \leq \alpha(G) \cdot \chi(G)$.
Applying this inequality to every bag of a tree decomposition $\mathcal{T}$ of $G$ with $\alpha(\mathcal{T}) = \tin(G)$, we obtain that $\tw(G) +1 \leq \tin(G) \cdot \chi(G)$.
Seymour~\cite{MR3425243} introduced the \emph{tree-chromatic number} of a graph $G$, denoted $\treechi(G)$, as the smallest nonnegative integer $k$ such that $G$ admits a tree decomposition, each bag of which induces a $k$-colorable subgraph.
With a similar reasoning as above, we also obtain that $\tw(G)+1 \le \alpha(G)\cdot \treechi(G)$.
Do these two inequalities admit the following common strengthening?

\begin{question}\label{tree-alpha-and-tree-chi}
Does every graph $G$ satisfy $\tw(G)+1\le \tin(G)\cdot \treechi(G)$?
\end{question}

\Cref{tree-alpha-and-tree-chi} has a positive answer in any class of graphs in which the tree-chromatic number coincides with the chromatic number.
In particular, this is the case for any class of graphs in which the chromatic number coincides with the clique number (such as the class of perfect graphs), since in this case $\omega(G)\le \treechi(G)\le \chi(G)$, where the first inequality holds for any graph (see~\cite{MR3425243}).

\medskip
\noindent\textbf{Note.}
After the paper was submitted for publication, some of the above questions have been answered or partially answered.
In fact, \cref{computing-tree-decompositions} already admitted a positive answer due to the work of Yolov~\cite{MR3775804}, with $f(k) = \mathcal{O}(k^3)$, which we were not aware of at the time of submission.
The function $f$ (and the running time of the algorithm) has recently been improved to $f(k) = 8k$ by Dallard, Golovach, Fomin, Korhonen, and Milani{\v c}~\cite{dallard2022computing}.
Together with \cref{thm:bounded-tree-independence-number}, this implies that \cref{mwis-complexity-for-bounded-tin} also has a positive answer.
Dallard et al.~also showed in~\cite{dallard2022computing} that for every integer $k\ge 4$, the problem of recognizing graphs with tree-independence number at most $k$ is \NP-complete, leaving \cref{computing-constant-tree-independence-number} open only for $k\in \{2,3\}$.

\subsection*{Acknowledgements}

We are grateful to Erik Jan van Leeuwen for telling us about the work of Yolov~\cite{MR3775804}.
This work is supported in part by the Slovenian Research and Innovation Agency (I0-0035, research programs P1-0285 and P1-0383, research projects J1-3001, J1-3002, J1-3003, J1-4008, J1-4084, N1-0102, and N1-0160) and by the research program CogniCom (0013103) at the University of Primorska.

\bibliographystyle{abbrv}
\small
\bibliography{biblio}

\begin{thebibliography}{100}

\bibitem{MR4480493}
T.~Abrishami, M.~Chudnovsky, S.~Hajebi, and S.~Spirkl.
\newblock Induced subgraphs and tree decompositions {III}.
  {T}hree-path-configurations and logarithmic treewidth.
\newblock {\em Adv. Comb.}, 2022.
\newblock Paper No.~6, 29 pp.

\bibitem{MR4262549}
T.~Abrishami, M.~Chudnovsky, M.~Pilipczuk, P.~Rz\c{a}\.{z}ewski, and
  P.~Seymour.
\newblock Induced subgraphs of bounded treewidth and the container method.
\newblock In {\em Proceedings of the 2021 {ACM}-{SIAM} {S}ymposium on
  {D}iscrete {A}lgorithms ({SODA})}, pages 1948--1964. SIAM, Philadelphia, PA,
  2021.

\bibitem{MR3776983}
B.~Ahat, T.~Ekim, and Z.~C. Ta\c{s}k{\i}n.
\newblock Integer programming formulations and benders decomposition for the
  maximum induced matching problem.
\newblock {\em INFORMS J. Comput.}, 30(1):43--56, 2018.

\bibitem{MR1105479}
S.~Arnborg, J.~Lagergren, and D.~Seese.
\newblock Easy problems for tree-decomposable graphs.
\newblock {\em J. Algorithms}, 12(2):308--340, 1991.

\bibitem{MR985145}
S.~Arnborg and A.~Proskurowski.
\newblock Linear time algorithms for {NP}-hard problems restricted to partial
  {$k$}-trees.
\newblock {\em Discrete Appl. Math.}, 23(1):11--24, 1989.

\bibitem{MR3126917}
R.~Belmonte and M.~Vatshelle.
\newblock Graph classes with structured neighborhoods and algorithmic
  applications.
\newblock {\em Theoret. Comput. Sci.}, 511:54--65, 2013.

\bibitem{MR3296270}
W.~Ben-Ameur, M.-A. Mohamed-Sidi, and J.~Neto.
\newblock The {$k$}-separator problem: polyhedra, complexity and approximation
  results.
\newblock {\em J. Comb. Optim.}, 29(1):276--307, 2015.

\bibitem{MR1172354}
M.~B\'{\i}r\'{o}, M.~Hujter, and Z.~Tuza.
\newblock Precoloring extension. {I}. {I}nterval graphs.
\newblock {\em Discrete Math.}, 100(1-3):267--279, 1992.

\bibitem{MR1320296}
J.~R.~S. Blair and B.~Peyton.
\newblock An introduction to chordal graphs and clique trees.
\newblock In {\em Graph theory and sparse matrix computation}, volume~56 of
  {\em IMA Vol. Math. Appl.}, pages 1--29. Springer, New York, 1993.

\bibitem{MR1642971}
H.~Bodlaender, J.~Gustedt, and J.~A. Telle.
\newblock Linear-time register allocation for a fixed number of registers.
\newblock In {\em Proceedings of the {N}inth {A}nnual {ACM}-{SIAM} {S}ymposium
  on {D}iscrete {A}lgorithms ({S}an {F}rancisco, {CA}, 1998)}, pages 574--583.
  ACM, New York, 1998.

\bibitem{MR1417901}
H.~L. Bodlaender.
\newblock A linear-time algorithm for finding tree-decompositions of small
  treewidth.
\newblock {\em SIAM J. Comput.}, 25(6):1305--1317, 1996.

\bibitem{MR1647486}
H.~L. Bodlaender.
\newblock A partial {$k$}-arboretum of graphs with bounded treewidth.
\newblock {\em Theoret. Comput. Sci.}, 209(1-2):1--45, 1998.

\bibitem{MR2829452}
H.~L. Bodlaender and A.~M. C.~A. Koster.
\newblock Treewidth computations {II}. {L}ower bounds.
\newblock {\em Inform. and Comput.}, 209(7):1103--1119, 2011.

\bibitem{MR1215226}
H.~L. Bodlaender and R.~H. M\"{o}hring.
\newblock The pathwidth and treewidth of cographs.
\newblock {\em SIAM J. Discrete Math.}, 6(2):181--188, 1993.

\bibitem{MR4538068}
M.~Bonamy, E.~Bonnet, H.~D\'{e}pr\'{e}s, L.~Esperet, C.~Geniet, C.~Hilaire,
  S.~Thomass\'{e}, and A.~Wesolek.
\newblock Sparse graphs with bounded induced cycle packing number have
  logarithmic treewidth.
\newblock In {\em Proceedings of the 2023 {A}nnual {ACM}-{SIAM} {S}ymposium on
  {D}iscrete {A}lgorithms ({SODA})}, pages 3006--3028. SIAM, Philadelphia, PA,
  2023.

\bibitem{MR4288865}
E.~Bonnet, C.~Geniet, E.~J. Kim, S.~Thomass\'{e}, and R.~Watrigant.
\newblock Twin-width {III}: {M}ax {I}ndependent {S}et, {M}in {D}ominating
  {S}et, and coloring.
\newblock In {\em 48th {I}nternational {C}olloquium on {A}utomata, {L}anguages,
  and {P}rogramming}, volume 198 of {\em LIPIcs. Leibniz Int. Proc. Inform.},
  pages Art. 35, 20. Schloss Dagstuhl. Leibniz-Zent. Inform., Wadern, 2021.

\bibitem{MR4262551}
E.~Bonnet, C.~Geniet, E.~J. Kim, S.~Thomass\'{e}, and R.~Watrigant.
\newblock Twin-width {II}: small classes.
\newblock {\em Comb. Theory}, 2(2), 2022.
\newblock Paper No.~10, 42 pp.

\bibitem{MR4402362}
E.~Bonnet, E.~J. Kim, S.~Thomass\'{e}, and R.~Watrigant.
\newblock Twin-width {I}: {T}ractable {FO} model checking.
\newblock {\em J. ACM}, 69(1):Art. 3, 46, 2022.

\bibitem{MR1686154}
A.~Brandst\"{a}dt, V.~B. Le, and J.~P. Spinrad.
\newblock {\em Graph classes: a survey}.
\newblock SIAM Monographs on Discrete Mathematics and Applications. SIAM,
  Philadelphia, PA, 1999.

\bibitem{MR3763297}
A.~Brandst\"{a}dt and R.~Mosca.
\newblock Maximum weight independent set for {$\ell$}claw-free graphs in
  polynomial time.
\newblock {\em Discrete Appl. Math.}, 237:57--64, 2018.

\bibitem{MR4189412}
N.~Brettell, J.~Horsfield, A.~Munaro, G.~Paesani, and D.~Paulusma.
\newblock Bounding the mim-width of hereditary graph classes.
\newblock {\em J. Graph Theory}, 99(1):117--151, 2022.

\bibitem{brianski2023separating}
M.~Briań\'{n}ski, J.~Davies, and B.~Walczak.
\newblock Separating polynomial $\chi$-boundedness from $\chi$-boundedness.
\newblock {\em Combinatorica}, 2023.
\newblock To appear.

\bibitem{MR2857670}
B.-M. Bui-Xuan, J.~A. Telle, and M.~Vatshelle.
\newblock Boolean-width of graphs.
\newblock {\em Theoret. Comput. Sci.}, 412(39):5187--5204, 2011.

\bibitem{MR3126918}
B.-M. Bui-Xuan, J.~A. Telle, and M.~Vatshelle.
\newblock Fast dynamic programming for locally checkable vertex subset and
  vertex partitioning problems.
\newblock {\em Theoret. Comput. Sci.}, 511:66--76, 2013.

\bibitem{MR357218}
P.~Buneman.
\newblock A characterisation of rigid circuit graphs.
\newblock {\em Discrete Math.}, 9:205--212, 1974.

\bibitem{MR1011265}
K.~Cameron.
\newblock Induced matchings.
\newblock {\em Discrete Appl. Math.}, 24(1-3):97--102, 1989.

\bibitem{MR2190818}
K.~Cameron and P.~Hell.
\newblock Independent packings in structured graphs.
\newblock {\em Math. Program.}, 105(2-3, Ser. B):201--213, 2006.

\bibitem{MR4262487}
P.~Chalermsook and B.~Walczak.
\newblock Coloring and maximum weight independent set of rectangles.
\newblock In {\em Proceedings of the 2021 {ACM}-{SIAM} {S}ymposium on
  {D}iscrete {A}lgorithms ({SODA})}, pages 860--868. SIAM, Philadelphia, PA,
  2021.

\bibitem{MR2087898}
G.~J. Chang.
\newblock The weighted independent domination problem is {NP}-complete for
  chordal graphs.
\newblock {\em Discrete Appl. Math.}, 143(1-3):351--352, 2004.

\bibitem{MR4249058}
S.~Chaplick, F.~V. Fomin, P.~A. Golovach, D.~Knop, and P.~Zeman.
\newblock Kernelization of graph {H}amiltonicity: proper {$H$}-graphs.
\newblock {\em SIAM J. Discrete Math.}, 35(2):840--892, 2021.

\bibitem{MR3746153}
S.~Chaplick, M.~T\"{o}pfer, J.~Voborn\'{\i}k, and P.~Zeman.
\newblock On {$H$}-topological intersection graphs.
\newblock In {\em Graph-theoretic concepts in computer science}, volume 10520
  of {\em Lecture Notes in Comput. Sci.}, pages 167--179. Springer, Cham, 2017.

\bibitem{MR4332111}
S.~Chaplick, M.~T\"{o}pfer, J.~Voborn\'{\i}k, and P.~Zeman.
\newblock On {$H$}-topological intersection graphs.
\newblock {\em Algorithmica}, 83(11):3281--3318, 2021.

\bibitem{DBLP:journals/endm/ChaplickZ17}
S.~Chaplick and P.~Zeman.
\newblock Combinatorial problems on {$H$}-graphs.
\newblock {\em Electron. Notes Discret. Math.}, 61:223--229, 2017.

\bibitem{Chudnovsky2020StonyBrook}
M.~Chudnovsky.
\newblock Induced subgraphs and tree decompositions.
\newblock Talk at the Stony Brook Mathematics Colloquium, October 22, 2020
  (online).

\bibitem{Chudnovsky2021Berlin}
M.~Chudnovsky.
\newblock Induced subgraphs and tree decompositions.
\newblock Talk at the Berlin Mathematical School, MATH+ Friday Colloquium,
  April 16, 2021 (online).

\bibitem{Chudnovsky2021Prague}
M.~Chudnovsky.
\newblock Induced subgraphs and tree decompositions.
\newblock Talk at the Charles University, Faculty of Mathematics and Physics,
  Department of Applied Mathematics, Noon seminar, May 14, 2021 (online).

\bibitem{Chudnovsky2021IWOCA}
M.~Chudnovsky.
\newblock Induced subgraphs and tree decompositions.
\newblock Invited talk at IWOCA 2021: 32nd International Workshop on
  Combinatorial Algorithms, 5-7 July 2021, Ottawa, Canada (online).

\bibitem{MR3096332}
M.~Chudnovsky, I.~Penev, A.~Scott, and N.~Trotignon.
\newblock Substitution and {$\chi$}-boundedness.
\newblock {\em J. Combin. Theory Ser. B}, 103(5):567--586, 2013.

\bibitem{MR4117301}
M.~Chudnovsky, M.~Pilipczuk, M.~Pilipczuk, and S.~Thomass\'{e}.
\newblock On the maximum weight independent set problem in graphs without
  induced cycles of length at least five.
\newblock {\em SIAM J. Discrete Math.}, 34(2):1472--1483, 2020.

\bibitem{MR4141321}
M.~Chudnovsky, M.~Pilipczuk, M.~Pilipczuk, and S.~Thomass\'{e}.
\newblock Quasi-polynomial time approximation schemes for the maximum weight
  independent set problem in {$H$}-free graphs.
\newblock In {\em Proceedings of the 2020 {ACM}-{SIAM} {S}ymposium on
  {D}iscrete {A}lgorithms}, pages 2260--2278. SIAM, Philadelphia, PA, 2020.

\bibitem{MR754426}
D.~G. Corneil and Y.~Perl.
\newblock Clustering and domination in perfect graphs.
\newblock {\em Discrete Appl. Math.}, 9(1):27--39, 1984.

\bibitem{MR2148860}
D.~G. Corneil and U.~Rotics.
\newblock On the relationship between clique-width and treewidth.
\newblock {\em SIAM J. Comput.}, 34(4):825--847, 2005.

\bibitem{MR1042649}
B.~Courcelle.
\newblock The monadic second-order logic of graphs. {I}. {R}ecognizable sets of
  finite graphs.
\newblock {\em Inform. and Comput.}, 85(1):12--75, 1990.

\bibitem{MR1739644}
B.~Courcelle, J.~A. Makowsky, and U.~Rotics.
\newblock Linear time solvable optimization problems on graphs of bounded
  clique-width.
\newblock {\em Theory Comput. Syst.}, 33(2):125--150, 2000.

\bibitem{MR1743732}
B.~Courcelle and S.~Olariu.
\newblock Upper bounds to the clique width of graphs.
\newblock {\em Discrete Appl. Math.}, 101(1-3):77--114, 2000.

\bibitem{MR3380745}
M.~Cygan, F.~V. Fomin, {\L}.~Kowalik, D.~Lokshtanov, D.~Marx, M.~Pilipczuk,
  M.~Pilipczuk, and S.~Saurabh.
\newblock {\em Parameterized algorithms}.
\newblock Springer, Cham, 2015.

\bibitem{MR3967291}
K.~K. Dabrowski, M.~Johnson, and D.~Paulusma.
\newblock Clique-width for hereditary graph classes.
\newblock In {\em Surveys in combinatorics 2019}, volume 456 of {\em London
  Math. Soc. Lecture Note Ser.}, pages 1--56. Cambridge Univ. Press, Cambridge,
  2019.

\bibitem{dallard2022computing}
C.~Dallard, F.~V. Fomin, P.~A. Golovach, T.~Korhonen, and M.~Milani{\v c}.
\newblock Computing tree decompositions with small independence number, 2022.
\newblock arXiv:2207.09993.

\bibitem{DMS-WG2020}
C.~Dallard, M.~Milani{\v{c}}, and K.~{\v{S}}torgel.
\newblock Treewidth versus clique number in graph classes with a forbidden
  structure.
\newblock In I.~Adler and H.~M{\"{u}}ller, editors, {\em Graph-Theoretic
  Concepts in Computer Science - 46th International Workshop, {WG} 2020, Leeds,
  UK, June 24-26, 2020, Revised Selected Papers}, volume 12301 of {\em Lecture
  Notes in Computer Science}, pages 92--105. Springer, 2020.

\bibitem{dallard2022secondpaper}
C.~Dallard, M.~Milani{\v{c}}, and K.~{\v{S}}torgel.
\newblock Treewidth versus clique number. {III}. {T}ree-independence number of
  graphs with a forbidden structure, 2022.
\newblock arXiv:2206.15092.

\bibitem{dallard2021treewidth}
C.~Dallard, M.~Milani\v{c}, and K.~\v{S}torgel.
\newblock Treewidth versus clique number. {I}. {G}raph classes with a forbidden
  structure.
\newblock {\em SIAM J. Discrete Math.}, 35(4):2618--2646, 2021.

\bibitem{MR3800845}
R.~Diestel and M.~M\"{u}ller.
\newblock Connected tree-width.
\newblock {\em Combinatorica}, 38(2):381--398, 2018.

\bibitem{MR130190}
G.~A. Dirac.
\newblock On rigid circuit graphs.
\newblock {\em Abh. Math. Sem. Univ. Hamburg}, 25:71--76, 1961.

\bibitem{MR2326163}
Y.~Dourisboure and C.~Gavoille.
\newblock Tree-decompositions with bags of small diameter.
\newblock {\em Discrete Math.}, 307(16):2008--2029, 2007.

\bibitem{MR3209756}
F.~F. Dragan and E.~K\"{o}hler.
\newblock An approximation algorithm for the tree {$t$}-spanner problem on
  unweighted graphs via generalized chordal graphs.
\newblock {\em Algorithmica}, 69(4):884--905, 2014.

\bibitem{MR778751}
P.~Duchet.
\newblock Classical perfect graphs: an introduction with emphasis on
  triangulated and interval graphs.
\newblock In {\em Topics on perfect graphs}, volume~88 of {\em North-Holland
  Math. Stud.}, pages 67--96. North-Holland, Amsterdam, 1984.

\bibitem{esperet2017graph}
L.~Esperet.
\newblock {\em Graph colorings, flows and perfect matchings}.
\newblock {H}abilitation {T}hesis, Universit{\'e} Grenoble Alpes, 2017.

\bibitem{MR687354}
M.~Farber.
\newblock Independent domination in chordal graphs.
\newblock {\em Oper. Res. Lett.}, 1(4):134--138, 1981/82.

\bibitem{MR2411037}
U.~Feige, M.~Hajiaghayi, and J.~R. Lee.
\newblock Improved approximation algorithms for minimum weight vertex
  separators.
\newblock {\em SIAM J. Comput.}, 38(2):629--657, 2008.

\bibitem{MR4276552}
F.~V. Fomin and P.~A. Golovach.
\newblock Subexponential parameterized algorithms and kernelization on almost
  chordal graphs.
\newblock {\em Algorithmica}, 83(7):2170--2214, 2021.

\bibitem{MR4141534}
F.~V. Fomin, P.~A. Golovach, and J.-F. Raymond.
\newblock On the tractability of optimization problems on {$H$}-graphs.
\newblock {\em Algorithmica}, 82(9):2432--2473, 2020.

\bibitem{FominKorhonen2022}
F.~V. Fomin and T.~Korhonen.
\newblock Fast {FPT}-approximation of branchwidth.
\newblock In {\em S{TOC} '22---{P}roceedings of the 54th {A}nnual {ACM}
  {SIGACT} {S}ymposium on {T}heory of {C}omputing}, pages 886--899. ACM, New
  York, 2022.

\bibitem{MR3311877}
F.~V. Fomin, I.~Todinca, and Y.~Villanger.
\newblock Large induced subgraphs via triangulations and {CMSO}.
\newblock {\em SIAM J. Comput.}, 44(1):54--87, 2015.

\bibitem{MR578325}
M.~R. Garey, D.~S. Johnson, G.~L. Miller, and C.~H. Papadimitriou.
\newblock The complexity of coloring circular arcs and chords.
\newblock {\em SIAM J. Algebraic Discrete Methods}, 1(2):216--227, 1980.

\bibitem{MR4232071}
P.~Gartland and D.~Lokshtanov.
\newblock Independent set on {${P}_k$}-free graphs in quasi-polynomial time.
\newblock In {\em 2020 {IEEE} 61st {A}nnual {S}ymposium on {F}oundations of
  {C}omputer {S}cience---{FOCS} 2020}, pages 613--624. IEEE Computer Soc., Los
  Alamitos, CA, 2020.

\bibitem{10.1145/3406325.3451034}
P.~Gartland, D.~Lokshtanov, M.~Pilipczuk, M.~Pilipczuk, and P.~Rza\.{z}ewski.
\newblock Finding large induced sparse subgraphs in ${C}_{>t}$-free graphs in
  quasipolynomial time.
\newblock In {\em Proceedings of the 53rd Annual ACM SIGACT Symposium on Theory
  of Computing}, STOC 2021, page 330–341, New York, NY, USA, 2021.
  Association for Computing Machinery.

\bibitem{MR332541}
F.~Gavril.
\newblock The intersection graphs of subtrees in trees are exactly the chordal
  graphs.
\newblock {\em J. Combinatorial Theory Ser. B}, 16:47--56, 1974.

\bibitem{MR2063679}
M.~C. Golumbic.
\newblock {\em Algorithmic {G}raph {T}heory and {P}erfect {G}raphs}, volume~57
  of {\em Annals of Discrete Mathematics}.
\newblock Elsevier Science B.V., Amsterdam, second edition, 2004.

\bibitem{MR936633}
M.~Gr\"{o}tschel, L.~Lov\'{a}sz, and A.~Schrijver.
\newblock {\em Geometric algorithms and combinatorial optimization}, volume~2
  of {\em Algorithms and Combinatorics: Study and Research Texts}.
\newblock Springer-Verlag, Berlin, 1988.

\bibitem{MR3909546}
A.~Grzesik, T.~Klimo\v{s}ov\'{a}, M.~Pilipczuk, and M.~Pilipczuk.
\newblock Polynomial-time algorithm for maximum weight independent set on
  {$P_6$}-free graphs.
\newblock {\em ACM Trans. Algorithms}, 18(1), jan 2022.

\bibitem{MR951359}
A.~Gy\'{a}rf\'{a}s.
\newblock Problems from the world surrounding perfect graphs.
\newblock {\em Zastos. Mat.}, 19(3-4):413--441, 1987.

\bibitem{MR4189425}
A.~Jacob, F.~Panolan, V.~Raman, and V.~Sahlot.
\newblock Structural parameterizations with modulator oblivion.
\newblock {\em Algorithmica}, 84(8):2335--2357, 2022.

\bibitem{MR4020556}
L.~Jaffke, O.-j. Kwon, T.~J.~F. Str\o{}mme, and J.~A. Telle.
\newblock Mim-width {III}. {G}raph powers and generalized distance domination
  problems.
\newblock {\em Theoret. Comput. Sci.}, 796:216--236, 2019.

\bibitem{jegou2014tree}
P.~J{\'e}gou and C.~Terrioux.
\newblock Tree-decompositions with connected clusters for solving constraint
  networks.
\newblock In {\em International Conference on Principles and Practice of
  Constraint Programming}, pages 407--423. Springer, 2014.

\bibitem{MR3721445}
D.~Y. Kang, O.-j. Kwon, T.~J.~F. Str\o{}mme, and J.~A. Telle.
\newblock A width parameter useful for chordal and co-comparability graphs.
\newblock {\em Theoret. Comput. Sci.}, 704:1--17, 2017.

\bibitem{MR0378476}
R.~M. Karp.
\newblock Reducibility among combinatorial problems.
\newblock In {\em Complexity of computer computations ({P}roc. {S}ympos., {IBM}
  {T}homas {J}. {W}atson {R}es. {C}enter, {Y}orktown {H}eights, {N}.{Y}.,
  1972)}, pages 85--103, 1972.

\bibitem{MR1948213}
D.~Kobler and U.~Rotics.
\newblock Edge dominating set and colorings on graphs with fixed clique-width.
\newblock {\em Discrete Appl. Math.}, 126(2-3):197--221, 2003.

\bibitem{DBLP:conf/cc/Krause13}
P.~K. Krause.
\newblock Optimal register allocation in polynomial time.
\newblock In R.~Jhala and K.~D. Bosschere, editors, {\em Compiler Construction
  - 22nd International Conference, {CC} 2013, Held as Part of the European
  Joint Conferences on Theory and Practice of Software, {ETAPS} 2013, Rome,
  Italy, March 16-24, 2013. Proceedings}, volume 7791 of {\em Lecture Notes in
  Computer Science}, pages 1--20. Springer, 2013.

\bibitem{MR3987192}
E.~Lee.
\newblock Partitioning a graph into small pieces with applications to path
  transversal.
\newblock {\em Math. Program.}, 177(1-2, Ser. A):1--19, 2019.

\bibitem{MR3853109}
S.~Mengel.
\newblock Lower bounds on the mim-width of some graph classes.
\newblock {\em Discrete Appl. Math.}, 248:28--32, 2018.

\bibitem{MR2812599}
Y.~Orlovich, A.~Dolgui, G.~Finke, V.~Gordon, and F.~Werner.
\newblock The complexity of dissociation set problems in graphs.
\newblock {\em Discrete Appl. Math.}, 159(13):1352--1366, 2011.

\bibitem{MR2232389}
S.-i. Oum and P.~Seymour.
\newblock Approximating clique-width and branch-width.
\newblock {\em J. Combin. Theory Ser. B}, 96(4):514--528, 2006.

\bibitem{MR4151749}
B.~S. Panda, A.~Pandey, J.~Chaudhary, P.~Dane, and M.~Kashyap.
\newblock Maximum weight induced matching in some subclasses of bipartite
  graphs.
\newblock {\em J. Comb. Optim.}, 40(3):713--732, 2020.

\bibitem{DBLP:conf/sosa/PilipczukPR21}
M.~Pilipczuk, M.~Pilipczuk, and P.~Rz\k{a}\.{z}ewski.
\newblock Quasi-polynomial-time algorithm for independent set in {$P_t$}-free
  graphs via shrinking the space of induced paths.
\newblock In H.~V. Le and V.~King, editors, {\em 4th Symposium on Simplicity in
  Algorithms, {SOSA} 2021, Virtual Conference, January 11-12, 2021}, pages
  204--209. SIAM, Philadelphia, PA, 2021.

\bibitem{MR2006100}
V.~Raghavan and J.~Spinrad.
\newblock Robust algorithms for restricted domains.
\newblock {\em J. Algorithms}, 48(1):160--172, 2003.

\bibitem{MR1576401}
F.~P. Ramsey.
\newblock On a {P}roblem of {F}ormal {L}ogic.
\newblock {\em Proc. London Math. Soc. (2)}, 30(4):264--286, 1929.

\bibitem{MR742386}
N.~Robertson and P.~D. Seymour.
\newblock Graph minors. {III}. {P}lanar tree-width.
\newblock {\em J. Combin. Theory Ser. B}, 36(1):49--64, 1984.

\bibitem{MR855559}
N.~Robertson and P.~D. Seymour.
\newblock Graph minors. {II}. {A}lgorithmic aspects of tree-width.
\newblock {\em J. Algorithms}, 7(3):309--322, 1986.

\bibitem{MR408312}
D.~J. Rose, R.~E. Tarjan, and G.~S. Lueker.
\newblock Algorithmic aspects of vertex elimination on graphs.
\newblock {\em SIAM J. Comput.}, 5(2):266--283, 1976.

\bibitem{MR1090614}
P.~Scheffler.
\newblock What graphs have bounded tree-width?
\newblock In {\em Proceedings of the 7th {F}ischland {C}olloquium, {III}
  ({W}ustrow, 1988)}, number~41, pages 31--38, 1990.

\bibitem{MR4174126}
A.~Scott and P.~Seymour.
\newblock A survey of {$\chi$}-boundedness.
\newblock {\em J. Graph Theory}, 95(3):473--504, 2020.

\bibitem{MR3425243}
P.~Seymour.
\newblock Tree-chromatic number.
\newblock {\em J. Combin. Theory Ser. B}, 116:229--237, 2016.

\bibitem{MR1852483}
K.~Skodinis.
\newblock Efficient analysis of graphs with small minimal separators.
\newblock In {\em Graph-theoretic concepts in computer science ({A}scona,
  1999)}, volume 1665 of {\em Lecture Notes in Comput. Sci.}, pages 155--166.
  Springer, Berlin, 1999.

\bibitem{MR1971502}
J.~P. Spinrad.
\newblock {\em Efficient graph representations}, volume~19 of {\em Fields
  Institute Monographs}.
\newblock American Mathematical Society, Providence, RI, 2003.

\bibitem{MR1935387}
L.~Sunil~Chandran.
\newblock A linear time algorithm for enumerating all the minimum and minimal
  separators of a chordal graph.
\newblock In {\em Computing and combinatorics ({G}uilin, 2001)}, volume 2108 of
  {\em Lecture Notes in Comput. Sci.}, pages 308--317. Springer, Berlin, 2001.

\bibitem{MR798539}
R.~E. Tarjan.
\newblock Decomposition by clique separators.
\newblock {\em Discrete Math.}, 55(2):221--232, 1985.

\bibitem{DBLP:journals/ftopt/VandenbergheA15}
L.~Vandenberghe and M.~S. Andersen.
\newblock Chordal graphs and semidefinite optimization.
\newblock {\em Found. Trends Optim.}, 1(4):241--433, 2015.

\bibitem{vatshelle2012new}
M.~Vatshelle.
\newblock {\em New width parameters of graphs}.
\newblock PhD thesis, University of Bergen, 2012.

\bibitem{MR505894}
J.~R. Walter.
\newblock Representations of chordal graphs as subtrees of a tree.
\newblock {\em J. Graph Theory}, 2(3):265--267, 1978.

\bibitem{MR1367739}
D.~B. West.
\newblock {\em Introduction to {G}raph {T}heory}.
\newblock Prentice Hall, Inc., Upper Saddle River, NJ, 1996.

\bibitem{MR615221}
M.~Yannakakis.
\newblock Node-deletion problems on bipartite graphs.
\newblock {\em SIAM J. Comput.}, 10(2):310--327, 1981.

\bibitem{MR3775804}
N.~Yolov.
\newblock Minor-matching hypertree width.
\newblock In {\em Proceedings of the {T}wenty-{N}inth {A}nnual {ACM}-{SIAM}
  {S}ymposium on {D}iscrete {A}lgorithms}, pages 219--233. SIAM, Philadelphia,
  PA, 2018.

\bibitem{MR3593941}
J.~You, J.~Wang, and Y.~Cao.
\newblock Approximate association via dissociation.
\newblock In {\em Graph-theoretic concepts in computer science}, volume 9941 of
  {\em Lecture Notes in Comput. Sci.}, pages 13--24. Springer, Berlin, 2016.

\bibitem{MR2403018}
D.~Zuckerman.
\newblock Linear degree extractors and the inapproximability of max clique and
  chromatic number.
\newblock {\em Theory Comput.}, 3:103--128, 2007.

\bibitem{MR2207507}
I.~E. Zverovich.
\newblock Satgraphs and independent domination. {I}.
\newblock {\em Theoret. Comput. Sci.}, 352(1-3):47--56, 2006.

\end{thebibliography}

\end{document}